\documentclass[11pt]{amsart}
\usepackage{amsmath}
\usepackage{amssymb}
\usepackage{amsfonts}
\usepackage{amsthm}
\usepackage{amsbsy}
\usepackage{amsgen}
\usepackage{amscd}
\usepackage{amsopn}
\usepackage{amstext}
\usepackage{amsxtra}

\newtheorem{theorem}{Theorem}[section]
\newtheorem{proposition}[theorem]{Proposition}
\newtheorem{lemma}[theorem]{Lemma}
\newtheorem{corollary}[theorem]{Corollary}

\theoremstyle{definition}

\numberwithin{equation}{section}

\newcommand {\Z} {\mathbb{Z}}
\newcommand {\R} {\mathbb{R}}
\newcommand {\C} {\mathbb{C}}

\newcommand {\HH} {\mathbb{H}}

\newcommand{\disc}{\operatorname{disc}}
\newcommand{\sign}{\operatorname{sign}}

\newcommand{\intinf}{\int_{-\infty}^\infty}

\newcommand{\Vol}{\operatorname{vol}}

\newcommand{\tr}{\operatorname{tr}}


\newcommand{\SL}{\operatorname{SL}}
\newcommand{\PSL}{\operatorname{PSL}}
\newcommand{\hilb}{\mathcal H}
\newcommand{\trs}{\mathbf w}
\newcommand{\ors}{\mathbf r}
\newcommand{\UU}{\mathcal U}
\newcommand{\maasK}{{\mathsf K}}
\newcommand{\maasL}{{\mathsf \Lambda}}
\newcommand{\EB}{\operatorname{Beta}} 

\begin{document}
\title[Variance of arithmetic measures on the modular surface]
{The variance of arithmetic measures associated to closed geodesics on the modular surfaces}
\author{Wenzhi Luo, Ze\'ev Rudnick and Peter Sarnak}

\address{Department of Mathematics,  The Ohio State University, 
100 Math Tower,  231 West 18 th Avenue,  Columbus, OH 43210-1174 USA}

\address{Raymond and Beverly Sackler School of Mathematical Sciences,
Tel Aviv University, Tel Aviv 69978, Israel and  
School of Mathematics, Institute for Advanced Study, 
Einstein Drive, Princeton, NJ 08540 USA}

\address{Department of Mathematics, Princeton University, Fine Hall, 
Washington Road, Princeton, NJ 08544 and 
School of Mathematics, Institute for Advanced Study, 
Einstein Drive, Princeton, NJ 08540 USA}

\date{April 15, 2009}
\maketitle
\tableofcontents

\section{Introduction}
\subsection{Equidistribution theorems for closed geodesics}

Let $X$ be a compact surface with a metric of constant negative
curvature $\kappa=-1$, $SX$ be the unit tangent bundle of $X$, and
$\Phi^t:SX\to SX$ the geodesic flow. We think of $SX$ as the set of
initial conditions $(z,\zeta)$ with $z\in X$ the position and $\zeta$
the direction vector.

The geodesic flow is {\em ergodic} with respect to Liouville measure
$dx$, the smooth invariant measure for the flow: {\em Generic}
geodesics become equidistributed, in the sense that
for Lebesgue-almost all initial conditions $x_0\in SX$,
$$
\lim_{T\to\infty} \frac 1T\int_0^T F(\Phi^t x_0) dt = \int_{SX} F(x) dx
$$
for integrable observables on $SX$.

As is well known ,  there are infinitely many
{\em closed} geodesics, in fact the number $\pi(T)$ of  
closed geodesics of length at most $T$ grows exponentially
with $T$:  $\pi(T) \sim e^{T}/T$ as $T\to \infty$ \cite{Selberg56}, \cite{Huber}. 
For a closed geodesic $C$, let $\ell(C)$ be its length and
$\mu_C$ be the arc-length measure along $C$, i.e. for $F\in C(X)$,
$$
\int_C F d\mu_C :=\int_0^{\ell(C)} F(\Phi^t x) dt,\quad
x\in C \;.
$$
This is a measure on $SX$, invariant under the geodesic flow and of
total mass $\ell(C)$.
Closed geodesics become, on average, uniformly distributed
with respect to $dx$:\footnote{In {\em variable} negative
curvature, one needs the Bowen-Margulis measure here. To get an
equidistribution statement involving Liouville measure, one needs to
weigh each geodesic by its ``monodromy''. }
For any observable $F\in C(SX)$ we have
$$
\lim_{T\to\infty} \frac 1{\pi(T)} \sum_{\ell(C)\leq T}
\frac 1{\ell(C)} \int_C F d\mu_C  = \int_{SX}F(x) dx \;.
$$


Lalley \cite{Lalley} determined the fluctuations of the numbers
$\mu_C(F)/\sqrt{\ell(C)}$ for $F$ as above of zero mean,
as $C$ varies over closed geodesics ordered by length.
He showed that they are Gaussian with mean zero and
variance $V(F,F)$ where $V$ is the hermitian bi-linear form on functions
of zero mean given by
\begin{equation}
V(F_1,F_2) =  \int_{-\infty}^\infty \left(\int_{SX} F_1(x) F_2(\Phi^t x)
 dx \right)dt \;.
\end{equation}
The negative curvature guarantees that the correlations in the inner integral decay 
exponentially as $t\to\pm \infty$, so that $V$ is convergent \cite{Ratner}.

 The bilinear form $V$ is positive semi-definite, and is degenerate,
in fact if $F_0$ is smooth then $V(F_0,F) = 0$ for all $F$ if and only if $F_0$ is a derivative in the
flow direction: $F_0 = \frac{d}{dt}|_{t=0}f\circ \Phi^t$ for some
other observable $f\in C^\infty(SX)$.
 

An important involution of $SX$ is {\em time reversal symmetry}
$$\trs: (z,\zeta)\mapsto (z,-\zeta)$$
which reverses the direction vector of the initial condition, and
satisfies $\trs \circ \Phi^t = \Phi^{-t}\circ \trs$. It induces an
involution on the
set of geodesics, taking a geodesic $C=\{\Phi^t x_0: t\in \R\}$ to
its time reversal $\bar C= \trs C = \{\Phi^s \trs x_0: s\in \R\}$.
 
Time reversal symmetry can also be incorporated in Lalley's theorem:
To do so, note that for a closed geodesic $C$,  its time-reversed
partner $\bar C$  is also closed and both have the same length:
$\ell(C)=\ell(\bar C)$. Grouping these together yields the
measure $\mu_C^{even} :=\mu_C+\mu_{\bar C}$ of mass $2\ell(C)$. By
Lalley's theorem, the fluctuations of $\mu_C^{even}/\sqrt{2\ell(C)}$
are again Gaussian with mean zero but with variance given by the
hermitian form
\begin{equation}
V^{even}(F_1,F_2) = V\left(F_1^{even}, F_2^{even}\right)
\end{equation}
where $F^{even}=(F+F\circ \trs)/2$ is the even part of $F$ under
$\trs$.
Note that $\mu_C^{even}$ is invariant
and $V^{even}$ is bi-invariant under 
the geodesic flow as well as under time-reversal symmetry $\trs$. 
Both hermitian forms $V$ and $V^{even}$ on 
$$ L^2_0(SX):= \{f\in L^2(SX): \int_{SX} f(x)dx=0 \}$$
can be diagonalized and computed explicitly by decomposing the regular representation 
of $\PSL_2(\R)$ on this space, see \S~\ref{ladder}.

\subsection{The modular surface}
In this paper we investigate fluctuations of measures on the
{\em modular surface} associated with grouping together geodesics of
equal {\em discriminant}. As is well known,
any of our compact surfaces $X$ may be uniformized as a quotient of
the upper half-plane $\HH$, equipped with the hyperbolic metric, by a
Fuchsian group $\Gamma$.
Furthermore, the group $G=\PSL_2(\R)$ of orientation preserving
isometries of $\HH$ acts transitively on the unit tangent bundle
$SX$, giving an identification $SX\simeq \Gamma\backslash G$, reviewed
in \S~\ref{sec:periodconstrut}.
The {\em modular surface} is obtained by taking $\Gamma=\PSL_2(\Z)$; the
resulting surface is non-compact (but of finite volume) and has
elliptic fixed points, but these issues will not be important for us.

Closed geodesics correspond to (hyperbolic) conjugacy classes in
$\Gamma$, with the length of a closed geodesic $C$ given in terms of the
trace $t$ of the corresponding conjugacy class  by
$\ell(C) = 2\log(t+\sqrt{t^2-4})/2$.
In the case of the modular surface, the hyperbolic conjugacy classes
correspond to (strict) equivalence classes of integer binary quadratic
forms $ax^2+bxy+cy^2$ (also denoted by $[a,b,c]$), of positive
discriminant $d:=b^2-4ac$
with the modular group acting by linear substitutions (we need to
exclude discriminants which are perfect squares).
The discriminant $\disc(C)$ of a closed geodesic $C$ is defined as the
discriminant  of the corresponding binary quadratic form.

For $d>0$,  $d\equiv 0,1\mod 4$ and $d$ not a perfect square, let
$\bar f_1,\dots \bar f_{H(d)}$ be the classes of binary quadratic
forms of discriminant $d$. We do not assume that $f_j = [a_j,b_j,c_j]$
is primitive and so $H(d)$ is the Hurwitz class number \cite{Landau}. Let
\begin{equation}
\epsilon_d = \frac{t_d+\sqrt{d}u_d}2, \quad t_d>0, u_d>0
\end{equation}
be the fundamental solution of the Pellian equation $t^2-du^2=4$. Then
as in \cite{Sa2, Sa3} associate to each $\bar f_j$ the
$\Gamma$-conjugacy class (it is well-defined) of the matrix
\begin{equation}
\begin{pmatrix}
\frac{t_d-b_j u_d}2& a_j u_d \\ -c_j u_d& \frac{t_d+b_j u_d}2
\end{pmatrix} \;.
\end{equation}
This gives  $H(d)$ closed geodesics for each discriminant $d$,
all of length $2\log \epsilon_d$.
Let $\mu_d$ be the corresponding measure on $SX$:
\begin{equation}
 \mu_d = \sum_{\disc(C)=d} \mu_C \;.
\end{equation}


These measures are the arithmetic measures in the title of the paper. 
They have been studied extensively and the primary result about them is that they 
become equidistributed as $d\to \infty$. 
That is, if $F$ is bounded and continuous on $SX$ and has mean zero, then 
$$ \frac{\mu_d(F)}{H(d)2\log \epsilon_d} \to 0, \quad \mbox{as } d\to \infty  \;.
$$

Linnik \cite{Linnik} developed an ergodic theoretic approach to this 
equidistribution problem and recently \cite{ELMV} have shown that this method 
leads to a proof of  this specific result. 
The first proof of  equidistribution is due to Iwaniec \cite{Iwaniec-eq} and Duke \cite{Duke}. 
Iwaniec established the requisite estimate for Fourier coefficients 
of holomorphic half-integral weight forms (of weight $>5/2$) and Duke obtained the estimates 
for weight $3/2$  and weight zero Maass forms. 
In view of our reductions in Sections \S~\ref{ladder} and \S~\ref{sec theta},  
these together imply\footnote{Specifically, by \eqref{eq 3.24'}, \eqref{mu as fourier coeff}  and \eqref{KS coeff formula},   equidistribution on $SX$ is reduced to estimation of Fourier coefficients of classical holomorphic forms of half integer weight and Maass forms of weight $1/2$.} 
 the full equidistribution on $SX$. 

The measures $\mu_d$ enjoy some symmetries (see \cite{Sa3}). Firstly they are invariant under 
time reversal symmetry: $\trs \mu_d = \mu_d$. 
Secondly, let $\ors$ be the involution of $\Gamma\backslash G$ 
given by $g\mapsto \delta^{-1} g \delta$, 
where $$\delta = \begin{pmatrix}1&0\\0&-1\end{pmatrix}$$ 
(it is well defined since $\delta^{-1}\Gamma \delta = \Gamma$).
In terms of the coordinates $(z,\zeta)$ on $SX$, $\ors$ is 
the orientation-reversing symmetry 
$$\ors:(z,\zeta)\mapsto (-\overline{z},-\overline{\zeta})\;.$$
The measure $\mu_d$ is also invariant under $\ors$. 
The involutions $\trs$, $\ors$ commute and their product $\ors \trs$ 
is also an involution. Thus $\mu_d$ is invariant under the Klein four-group 
$H=\{I, \ors,\trs,\ors\trs \}$. 
These involutions induce linear actions on $L^2(\Gamma\backslash G)$ by 
$f(x)\mapsto f(h(x))$ with $h\in H$ and $x\in \Gamma\backslash G$ and 
we denote these transformations by the same symbols. 
The fluctuations of the measures $\mu_d$ inherit these symmetries 
and since we are particularly interested in comparing their variance with the classical variance $V$ 
we define the symmetrized classical variance $V^{sym}$ on functions of mean zero on 
$\Gamma\backslash G$ by 
\begin{equation}
 V^{sym}(F_1,F_2):= V(F_1^{sym},F_2^{sym})
\end{equation}
where $$F^{sym}:= \frac 14 \sum_{h\in H} hF \;.
$$

\subsection{Results}

We can now state our main results about the fluctuations of $\mu_d$. We normalize these measures as 
$$ \tilde \mu_d:=\frac{\mu_d}{d^{1/4}}\; .$$
This is essentially equivalent to normalizing by the square root of the total mass, 
$\sqrt{H(d)2\log \epsilon_d}$, see Remark \ref{rem:normalize}. 
The space of natural observables for which one might compute these quantities 
is $L^2_0(\Gamma\backslash G)$, or at least a dense subspace thereof. This space decomposes as an 
orthogonal direct sum of the cuspidal subspace 
$$
L^2_{cusp}(\Gamma\backslash G):=
\{ f\in L^2(\Gamma\backslash G): \int_{N\cap \Gamma\backslash N} f(nx)dn, 
\mbox{ for a.e. }x\in \Gamma\backslash G \}
$$
where $N=\{\begin{pmatrix} 1&u\\0&1  \end{pmatrix}:u\in \R \}$, 
and the unitary Eisenstein series \cite{Godement}.  
The former is the major and difficult part of the space 
$L^2_0(\Gamma\backslash G)$ so we will concentrate exclusively on it. 
One can easily   extend our analysis of the variance to the unitary Eisenstein series.



\begin{theorem}\label{thm 1}
Fix smooth, $K$-finite $F_1,F_2\in L^2_{cusp}(\Gamma\backslash
G)$.
Then
\begin{equation}
\lim_{Y\to \infty}
\frac 1{\#\{d:d\leq Y\}} \sum_{d\leq Y} \frac{\mu_d(F_1)}{d^{1/4}} = 0
\end{equation}
and there is a limiting variance
\begin{equation}\label{eq ii of thm 1}
B(F_1,F_2)=\lim_{Y\to \infty}
\frac 1{\#\{d:d\leq Y\}} \sum_{d\leq Y} \frac{\mu_d(F_1)}{d^{1/4}}
\frac{\overline{\mu_d(F_2)}}{d^{1/4}}
\end{equation}
\end{theorem}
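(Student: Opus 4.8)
The plan is to express the period $\mu_d(F)$ in spectral-theoretic terms and reduce the problem to known results on mean values of $L$-functions (or, equivalently, of Fourier coefficients of half-integral weight forms), so that existence of both limits follows from subconvexity/averaged asymptotics. First, using the $K$-finiteness, I would decompose $F_1,F_2$ into their isotypic components under the maximal compact $K=\PSL_2(\R)\cap SO(2)$ and, within $L^2_{cusp}$, further into an orthonormal basis of Hecke--Maass cusp forms (with their $K$-translates, i.e. moving up and down the weight ladder as in \S\ref{ladder}). Thus it suffices to treat the case where $F_1,F_2$ lie in a single cuspidal representation $\pi$, spanned by the weight-$m$ vectors $\phi_m$ attached to a fixed Hecke--Maass form $\phi$. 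For such a vector, the period $\mu_d(\phi_m)$ is a sum over the $H(d)$ classes of a fixed translate of $\phi$ integrated along the corresponding closed geodesic, and the classical Waldspurger/Katok--Sarnak-type formula (the ``theta correspondence'' reduction alluded to in \S\ref{sec theta}, via \eqref{eq 3.24'}, \eqref{mu as fourier coeff}, \eqref{KS coeff formula}) identifies $\mu_d(\phi_m)$, up to explicit elementary factors depending on $d$, with the $d$-th Fourier coefficient $\rho_\phi(d)$ of a fixed half-integral weight form in Shimura correspondence with $\phi$, equivalently with $\sqrt{L(1/2,\phi\otimes\chi_d)}$ times archimedean and ramified factors.

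Granting this, the first assertion — that the first moment $\frac1{\#\{d\le Y\}}\sum_{d\le Y}\mu_d(F_1)/d^{1/4}$ tends to $0$ — follows from any nontrivial bound on the average of $\rho_\phi(d)$ over $d\le Y$; the individual subconvex estimate of Duke (for weight $1/2$ Maass forms and weight $3/2$ holomorphic forms), already invoked for equidistribution, even gives a power saving for each fixed $d$ and a fortiori kills the normalized average. The substance is the second assertion: the existence of
\begin{equation*}
B(F_1,F_2)=\lim_{Y\to\infty}\frac1{\#\{d\le Y\}}\sum_{d\le Y}\frac{\mu_d(F_1)\overline{\mu_d(F_2)}}{\sqrt d}.
\end{equation*}
After the reduction this becomes the existence of the limit of $\frac1Y\sum_{d\le Y}|\rho_\phi(d)|^2$ (and the analogous cross terms when $F_1,F_2$ involve different Hecke forms or different weights, which by Hecke-eigenvalue orthogonality/Rankin--Selberg will contribute $0$ in the limit unless the underlying Maass forms coincide), i.e.\ a second-moment asymptotic for the Fourier coefficients, equivalently a first-moment asymptotic for $L(1/2,\phi\otimes\chi_d)$ averaged over fundamental-type discriminants $d\le Y$. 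Such asymptotics are available: one approximates $L(1/2,\phi\otimes\chi_d)$ by an approximate functional equation, opens the sum over $d$, exploits orthogonality of the quadratic characters $\chi_d$ (Poisson summation in $d$, isolating the diagonal $n=\square$), and shows the off-diagonal contributes a lower-order error. The main term is a constant times $Y$, and dividing by $\#\{d\le Y\}\sim cY$ gives the limiting constant; carrying the archimedean factors (which depend on the weights $m$ of $F_1,F_2$ and on the spectral parameter of $\phi$) through this computation produces the explicit shape of $B$ that the later sections evaluate.

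The main obstacle is the off-diagonal term in the second-moment computation. In the averaged functional-equation setup the diagonal gives the main term $\sim cY$ easily, but controlling the contribution of non-square $n$ — where one must save a power of $Y$ using cancellation in character sums $\sum_{d\le Y}\chi_d(n)$ smoothed against the test function, uniformly in $n$ up to the length $\sim Y$ of the approximate functional equation — requires care with the conductor dependence and with the non-squarefree $d$ (since $H(d)$ is the Hurwitz class number, so all $d\equiv0,1\bmod4$ occur, not just fundamental discriminants; one separates $d=\ell^2 d_0$ and handles the $\ell$-sum). This is exactly the type of estimate underlying the mean-value theorems for quadratic twists, and I would either cite the existing literature on $\sum_{d\le Y}L(1/2,\phi\otimes\chi_d)$ or redo it in the precise normalization needed; a complete power-saving error is not required for Theorem~\ref{thm 1}, only that the off-diagonal is $o(Y)$, which is comparatively soft. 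A secondary technical point is uniformity in the $K$-types $m$: since $F_1,F_2$ are only assumed $K$-finite, finitely many weights occur and one just needs the implied constants to be finite for each, so no uniformity in $m$ is actually needed, only in $Y$.
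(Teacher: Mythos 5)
Your reduction to a single automorphic representation via the weight ladder and the theta/Shimura correspondence is the same first step the paper takes (via \eqref{eq 3.24'}, \eqref{mu as fourier coeff}, \eqref{KS coeff formula}), but there is a genuine error in your first-moment argument, and your second-moment argument takes a substantially different and heavier route than the paper's.

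The error: you claim Duke's individual subconvex bound ``a fortiori kills the normalized average.'' It does not. Writing $a(d)=\mu_d(F)/d^{1/4}$ for the normalized Fourier coefficient of the theta lift, the trivial (Hecke) individual bound is $a(d)\ll d^{1/4+\epsilon}$ and Duke improves this to $d^{1/4-\delta}$; summing over $d\leq Y$ and dividing by $\#\{d\leq Y\}\sim Y/2$ still leaves a quantity of size $Y^{1/4-\delta}$, which diverges. What is actually needed is square-root cancellation in the sum over $d$, i.e.\ the Hecke-type estimate $\sum_{d\leq Y}a(d)=O(Y^{1/2+\epsilon})$ for normalized Fourier coefficients of (holomorphic or Maass) cusp forms, which is exactly what the paper invokes in \S\ref{sec:expected value}. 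This is a standard fact, so the gap is repairable, but the justification you give is wrong and no individual bound, however strong short of Ramanujan with a negative exponent, can substitute for cancellation in the $d$-sum.

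For the variance, you propose to convert $|\mu_d(f)|^2/\sqrt{d}$ into $L(\frac12,f\otimes\chi_d)$ by Kohnen--Zagier/Waldspurger and then compute the first moment of the twisted $L$-values by approximate functional equation and orthogonality of the quadratic characters. This can be made to work, but it forces you to (a) restrict the Waldspurger-type formula to fundamental discriminants and reassemble the full sum over all $d\equiv 0,1\bmod 4$ through the $d=\ell^2d_0$ dissection, where the coefficients at imprimitive $d$ involve twisted divisor sums of Hecke eigenvalues, and (b) track how the constant in the twisted first moment (which carries $L(1,\mathrm{sym}^2 f)$-type factors) recombines with $\langle f,f\rangle$ and the Waldspurger constant to produce $c(k)L(\frac12,\pi)V^{sym}$. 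The paper avoids all of this: since $\sum_{d\leq Y}|\mu_d(f)|^2/\sqrt{d}$ is literally the mean square of the positive-index Fourier coefficients of the theta lift, classical Rankin--Selberg unfolding gives the asymptotic directly, with the central value $L(\frac12,\pi)$ entering only once, through the global inner-product formulas $\langle\theta(f),\theta(f)\rangle = c\,L(\frac12,f)\langle f,f\rangle$ of Kohnen and Katok--Sarnak \eqref{Kohnen inner product}, \eqref{inner product Maass}. The one nontrivial analytic input the paper needs, which your plan does not engage with but also does not require, is the ``one-sided'' Rankin--Selberg theory for weight-$1/2$ Maass forms (Propositions~\ref{prop:Matthes} and~\ref{prop:Rankin-Selberg}): the unfolded integral mixes positive- and negative-index coefficients, and separating them requires the auxiliary integrals against $E_{\pm 2}$ built from the Maass raising and lowering operators, following Matthes. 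If you pursue your route you must still justify the $o(Y)$ bound on the off-diagonal uniformly in the imprimitive part, which is where the real work would lie.
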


We call this variance $B$ the ``arithmetic variance''.
The structure of the bilinear form $B$
is revealed by choosing a special basis of observables, compatible
with the symmetries of the problem.
Recall that the unit tangent bundle is a homogeneous space for
$G=\PSL_2(\R)$, and thus it is natural to decompose the space
$L^2(\Gamma\backslash G)$ into the irreducible components under the
$G$-action.  In addition, there is an
algebra of Hecke operators acting on this space, commuting with the
$G$-action, hence also acting on each isotypic $G$-component.
We take observables lying in irreducible spaces for the joint action of
$G$ and the Hecke operators - the automorphic subrepresentations of
$L^2_{cusp}(\Gamma\backslash G)$.
Denote  the decomposition of the regular
representation on $L^2_{cusp}(\Gamma\backslash G)$ into $G$- and
Hecke-irreducible subspaces by
\begin{equation}
L^2_{cusp}(\Gamma\backslash G) = \bigoplus_{j=1}^\infty W_{\pi_j}
\end{equation}
so $\pi_j$ is a cuspidal automorphic representation. 


In order to describe the arithmetic variance explicitly we need a more 
detailed description of the $W_{\pi_j}$'s. To each $\pi_j$ is associated 
an even integer $k$, its weight (see \S~\ref{ladder}) which we indicate by $\pi_j^k$. 
For $k=0$ there are infinitely many $\pi_j^0$'s corresponding to Hecke-Maass 
cusp forms on $X$, while for $k>0$ there are $d_k$ 
such $\pi_j^k$ (where $d_k$ is either $[\frac k{12}]$ or $[\frac k{12}]+1$, 
depending if $k/2=1\mod 6$ or not), corresponding to holomorphic 
Hecke cusp forms of weight $k$. For $k<0$ let 
$$W_{\pi_j^k} = \overline{W_{\pi_j^{-k}}}  = \{\overline{f}: f\in W_{\pi_j^{-k}} \}$$
for $j=1,2,\dots ,d_{-k}$ and these correspond to the anti-holomorphic Hecke cusp forms. 
With these we have the orthogonal decompositions 
\begin{eqnarray}
 L^2_{cusp}(\Gamma\backslash G) &= &\sum_{j=1}^\infty W_{\pi_j} \oplus 
\sum_{k\geq 12} \sum_{j=1}^{d_k} \left( W_{\pi_j^k} \oplus W_{\pi_j^{-k}} \right)
\nonumber\\
&=&\sum_{j=1}^\infty U_{\pi_j^0} \oplus \sum_{k\geq 12} \sum_{j=1}^{d_k}  U_{\pi_j^k}\label{decomp}
\end{eqnarray}
where 
\begin{equation}\label{def of U} 
 U_{\pi_j^0} = W_{\pi_j^0},\quad \mbox{ and } \quad U_{\pi_j^k}=W_{\pi_j^k} \oplus W_{\pi_j^{-k}} 
\end{equation}

To each $\pi_j$ as above one associates an L-function $L(s,\pi_j)$ given by 
\begin{equation}
 L(s,\pi_j) = \sum_{n=1}^\infty \frac{\lambda_{\pi_j}(n)}{n^s}, \quad \Re(s)>1
\end{equation}
where $\lambda_{\pi_j}(n)$ is the eigenvalue of the Hecke operator $T_n$ acting on $W_{\pi_j}$. 
It is well known (Hecke-Maass) that $L(s,\pi_j)$ extends to an entire function 
and satisfies a functional equation relating its value at $s$ to $1-s$. 
In particular the arithmetical central value $L(\frac 12,\pi_j)$ is well defined (and real). 

\begin{theorem}\label{main thm}
 Both $V^{sym}$ and $B$ are diagonalized by the decomposition \eqref{decomp} 
and on each subspace $U_{\pi_j^k}$ we have that 
$$
B|_{U_{\pi_j^k}} = c(k) L(\frac 12,\pi_j^k) V^{sym}|_{U_{\pi_j^k}} 
$$
where $c(0)=6/\pi$ and $c(k) = 1/\pi$ if $k>0$. 
\end{theorem}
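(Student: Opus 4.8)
The plan is to pass from the geodesic periods $\mu_d(F)$ to Fourier coefficients of half-integral weight forms via the theta correspondence of \S~\ref{sec theta}, to evaluate the resulting mean square by the Rankin--Selberg method, and then to match the emerging central $L$-value and the archimedean constants against the explicit shape of $V^{sym}$ furnished by \S~\ref{ladder}. By the ladder analysis of \S~\ref{ladder}, both $B$ and $V^{sym}$ restricted to $U_{\pi_j^k}$ are determined by their value on a single lowest $K$-type vector $\phi$ of $W_{\pi_j^k}$ (the underlying Hecke--Maass cusp form when $k=0$, the holomorphic Hecke cusp form of weight $k$ when $k>0$), the remaining matrix entries being forced by $G$-equivariance of the raising and lowering operators and by invariance under the Klein four-group $H$; so it suffices to compute $B(\phi,\phi)$. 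That $B$ is \emph{diagonalized} by \eqref{decomp} follows from the fact that $\mu_d$ commutes with the $H$-action and with the Hecke algebra, together with the vanishing of the cross terms for $\pi_i\ne\pi_j$ --- the latter, after the next step, being controlled by a Rankin--Selberg convolution of distinct cuspidal eigenforms, which has no pole.

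First I would invoke \eqref{mu as fourier coeff} and \eqref{KS coeff formula}: there is a half-integral weight form $g=g_\phi$ --- a weight $\tfrac12$ Maass cusp form if $k=0$, a holomorphic form of weight $\tfrac{k+1}{2}$ in Kohnen's plus space if $k>0$ --- and an explicit nonzero constant $\alpha_k$ with $\mu_d(\phi)/d^{1/4}=\alpha_k\rho_g(d)$, where $\rho_g(d)$ is the suitably normalized $d$-th Fourier coefficient. The vanishing of the first moment in Theorem~\ref{thm 1} is then immediate, since $\sum_d\rho_g(d)\,d^{-s}$ is, up to elementary factors, entire of finite order, so a standard contour shift gives $\sum_{d\le Y}\rho_g(d)=o(Y)$. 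For the variance one is reduced to evaluating
$$
B(\phi,\phi)=|\alpha_k|^2\lim_{Y\to\infty}\frac{1}{\#\{d\le Y\}}\sum_{d\le Y}\bigl|\rho_g(d)\bigr|^2 .
$$

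This mean square is a Rankin--Selberg computation. Unfolding the Petersson inner product of $|g|^2$, times the appropriate power of $\operatorname{Im}z$, against a real-analytic Eisenstein series of half-integral weight on $\Gamma_0(4)$ expresses $\sum_d|\rho_g(d)|^2 d^{-s}$, up to Gamma-factors, in terms of that Eisenstein series; the latter continues meromorphically and has a simple pole at the edge of the critical strip whose residue is a constant multiple of the norm $\langle g,g\rangle$. A Tauberian argument gives $\lim_Y\tfrac1Y\sum_{d\le Y}|\rho_g(d)|^2=\beta_k\langle g,g\rangle$. Finally \eqref{KS coeff formula} --- the Katok--Sarnak formula when $k=0$ and the Kohnen--Zagier formula when $k>0$, both avatars of Waldspurger's theorem --- identifies $\langle g,g\rangle$ with a constant multiple of $L(\tfrac12,\pi_j)\langle\phi,\phi\rangle$; equivalently, expressing $|\rho_g(d)|^2$ through $L(\tfrac12,\pi_j\otimes\chi_d)$ and averaging over $d$, the first moment of this quadratic-twist family feeds exactly the factor $L(\tfrac12,\pi_j)$ into the mean. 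Since \S~\ref{ladder} gives $V^{sym}(\phi,\phi)=\delta_k\langle\phi,\phi\rangle$ for an archimedean constant $\delta_k$, dividing makes the norms cancel and yields $B=c(k)\,L(\tfrac12,\pi_j)\,V^{sym}$ on $U_{\pi_j^k}$ with $c(k)=|\alpha_k|^2\beta_k\gamma_k/\delta_k$.

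The main obstacle is this last step. One must execute the half-integral weight Rankin--Selberg computation cleanly --- the weight $\tfrac12$ Eisenstein series and its residue involve Dirichlet $L$-functions and Gauss sums, and one wants to work inside Kohnen's plus space for a clean spectral picture --- correctly treat the non-squarefree discriminants, where $\rho_g(d)$ carries an extra twisted divisor sum in the Hecke eigenvalues $\lambda_{\pi_j}$ that contributes to the constant, and then track every normalization (the Katok--Sarnak / Kohnen--Zagier period constant $\alpha_k$, the Eisenstein residue $\beta_k$, the density $6/\pi^2$ of fundamental discriminants, and the ratios of archimedean Gamma-factors absorbed into $\gamma_k$ and $\delta_k$) all the way to the exact value of $c(k)$. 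It is precisely these archimedean and Eisenstein-series inputs, which differ between weight $\tfrac12$ --- the case $k=0$ --- and weight $\tfrac{k+1}{2}$ with $k\ge 12$, that produce the discrepancy $c(0)=6/\pi$ versus $c(k)=1/\pi$.
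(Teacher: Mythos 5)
Your overall route---reduce to minimal vectors by the ladder analysis of \S~\ref{ladder}, convert $\mu_d(\phi)/d^{1/4}$ into Fourier coefficients of the theta lift via \eqref{mu as fourier coeff} and \eqref{KS coeff formula}, evaluate the mean square by Rankin--Selberg plus a Tauberian theorem, and feed in the Kohnen and Katok--Sarnak inner-product formulas $\langle g,g\rangle = (\mathrm{const})\,L(\frac 12,\pi)\langle \phi,\phi\rangle$ to produce the central $L$-value---is exactly the paper's, and for $k>0$ (holomorphic forms, whose lifts in Kohnen's plus space have one-sided Fourier expansions) it goes through essentially as you describe.

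There is, however, a genuine gap in the Maass case $k=0$. The theta lift $F=\theta(\phi,\cdot)$ is a weight $\frac 12$ Maass form whose Fourier expansion runs over \emph{all} nonzero integers: the coefficients $\rho(d)$ with $d>0$ carry the geodesic periods $\mu_d$, while those with $d<0$ carry Heegner-point data. Unfolding $F\overline{F}\,E(z,s)$ (note the Eisenstein series here has weight $0$, not half-integral weight, so no Gauss sums or Dirichlet $L$-functions enter its residue) produces a single linear combination $L_+(s)\mathcal M_{1/4}(s)+L_-(s)\mathcal M_{-1/4}(s)$ of the Dirichlet series over positive and negative indices, weighted by two \emph{different} Mellin transforms of Whittaker functions. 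The pole of $E(z,s)$ at $s=1$ therefore gives one equation in the two unknown residues $R^{\pm}$ and does not determine $R^+=\mathrm{Res}_{s=1}L_+$, which is what the variance requires; and since $R^+/R^-=|\Gamma(\frac 14+ir)|^2/|\Gamma(\frac 34+ir)|^2\neq 1$, one cannot simply halve the two-sided average either. The paper resolves this in \S~\ref{sec:Rankin Selberg} (adapting Matthes) by forming a second Rankin--Selberg integral with the raised and lowered forms $\maasK_{1/2}F$, $\maasL_{1/2}F$ against Eisenstein series of weights $\mp 2$; these are regular at $s=1$, which supplies the missing homogeneous relation between $R^+$ and $R^-$, and the nonvanishing of the determinant of the resulting $2\times 2$ Whittaker--Mellin matrix also gives the separate analytic continuation of $L_{\pm}$ needed for your orthogonality claim when both forms are Maass forms. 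It is precisely the archimedean factor emerging from this separation that matches $V^{sym}(\phi_0,\phi_0)=\frac{|\Gamma(\frac 14+ir)|^4}{2\pi|\Gamma(\frac 12+2ir)|^2}\langle\phi_0,\phi_0\rangle$ and yields $c(0)=6/\pi$. (A smaller point: non-fundamental discriminants need no special treatment on the route you primarily describe, since the inner-product formulas work with $\langle g,g\rangle$ directly rather than with the twisted-$L$-value form of Waldspurger.)
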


\subsection{Remarks}
\subsubsection{}
 
The hermitian forms $V^{sym}$ and $B$ can be computed explicitly on each 
$U_{\pi_j^k}$ (see \S~\ref{ladder}). 
Time-reversal symmetry $\trs$ forces $V^{sym}$ to vanish on $U_{\pi_j^k}$ 
for $k=2\mod 4$. Also orientation-reversal symmetry $\ors$ fixes the weight zero 
spaces $U_{\pi_j^0}$ and hence takes the generating vector (see \S~\ref{ladder})  
$\phi_j^0\in  \pi_j^0$ into $\pm \phi_j^0$. Corresponding to this sign 
we call $U_{\pi_j^0}$  even or odd. According to \S~\ref{ladder},  
$V^{sym}$ is completely determined on $U_{\pi_j^k}$ by its value on the generating vector; 
hence it follows that  $V^{sym}|_{U_{\pi_j}}\equiv 0$ for the odd $\pi_j^0$'s. 
In the above cases where $V^{sym}|_{U_{\pi_j^k}}$ vanishes, the sign $\epsilon_{\pi_j}$ 
of the functional equation of $L(s,\pi_j)$ is $-1$ and hence the central L-value 
$L(\frac 12,\pi_j)=0$ for reasons of symmetry.  
In the other cases ($k=0\mod 4$ and $\pi_j^0$ even), $\epsilon_{\pi_j}=1$ and 
$V^{sym}|_{U_{\pi_j}}\neq 0$. 
One expects that in these cases $L(\frac 12,\pi_j)\neq 0$ as well. 
However if we pass from $\Gamma=\PSL_2(\Z)$ to a congruence subgroup, 
where our analysis can be carried over with similar results, 
then there will be $\pi$'s corresponding to holomorphic forms for which $L(\frac 12,\pi)=0$ 
for number-theoretic reasons, specifically the conjecture of Birch and Swinnerton-Dyer 
\cite{Wiles-Clay}. In this case the restriction of the arithmetic variance 
to such a subspace will vanish for reasons far deeper than just symmetry.

\subsubsection{}\label{rem:normalize}
The normalization $\mu_d(F)/d^{1/4}$ is natural from the arithmetic
point of view. To be consistent with the previous normalization we
should use the square root of the total mass
$\sqrt{H(d)2\log \epsilon_d}$  of the measure. By Dirichlet's class
number formula for
$d$ fundamental, when $H(d)=h(d)$ is the ordinary class number
(and similar formulae for all $d$),
\begin{equation}
h(d)\log \epsilon_d =\sqrt{d}L(1,\chi_d) \;.
\end{equation}
The fluctuations of $L(1,\chi_d)$ are mild and well-understood \cite{Elliott}
and hence the normalizations are essentially the same. In any case one
could use methods as in \cite[Chapter 26]{IK} to remove the weights
$L(1,\chi_d)$ and deduce Theorem~\ref{thm 1} with this other normalization.
 
\subsubsection{} In \S~\ref{ladder} we show, in a more abstract
context,  that the space of linear forms on an irreducible unitary 
representation of $G$ which are invariant by both the geodesic flow
and time reversal symmetry is at most one-dimensional, and how to incorporate orientation-reversal symmetry. 
This shows that the form that the arithmetic and ``classical'' variance take is
universal. That is for any family of such invariant measures, the
variance $B'$, if it exists, is determined completely in each
irreducible representation of $G$ by $B'(v_0,v_0)$, where $v_0$ is
either a spherical vector, or a lowest (or highest) weight vector in the
representation.

\subsubsection{}
The geometric problem is to order the $\mu_d$  by the {\em length}
of any of the geodesic components of the measure.
We do not know how to do this.
What we can do is to compute the variance of the
$\mu_d$'s when ordered by the discriminant $d$. 
From the arithmetic point of view this
ordering is anyway the most natural one.
For many considerations these two orderings of $\mu_d$
yield quite different answers (see \cite{Sa3}).
However for the fluctuations we believe they are similar.

The difficulty in proving the same result of the $\mu_d$'s ordered by
$t_d$ (or $\epsilon_d$) is apparent already for $F_1=F_2=f$ a
holomorphic cusp form of weight $m\equiv 0\mod 4$. In this case
according to the formula of Kohnen and Zagier \cite{KZ}, we have for $d$
a fundamental discriminant say
\begin{equation}\label{KZ formula}
\frac{| \mu_d(f) |^2} {\sqrt{d}} = \ast L(\frac 12,f\otimes \chi_d)
\end{equation}
(with $\ast$ explicit and under control). Thus we would need to
understand the averages
\begin{equation}
\sum_{t_d\leq Y} L(\frac 12,f\otimes \chi_d)
\end{equation}
The first, but big, step in this direction would be to
understand
\begin{equation}
\sum_{t\leq Y} L(\frac 12,f\otimes \chi_{t^2-4})
\end{equation}
(see \cite{Raulf} for an execution of such an analysis on a simpler problem).
This appears to be beyond the well developed techniques for averaging
special values of L-functions in families. We leave it as an
interesting open problem.

\subsubsection{}
The recent work \cite{Rudnick-Sound} giving lower bounds for moments
of special values of L-functions in families, together with \eqref{KZ formula},
shows that the fluctuations of $\mu_d(F)/d^{1/4}$ are not Gaussian,
at least not in the sense of convergence of moments.

\subsubsection{}
The arithmetic variance $B$ in Theorem~\ref{thm 1} is the same as the
quantum variance for the fluctuations of high energy eigenstates on
the modular surface that were calculated in \cite{LS} and
\cite{Zhao}. We expect that the variance for the $\mu_d$'s when
ordered by length will be the same as $B$. This would give a
semi-classical periodic orbit explanation for the singular finding
\cite{LS} that the quantum variance is $B$ rather than $V^{even}$. It
points yet again, just as for the local spectral statistics (see the survey \cite{SaAQC}), 
to the source of the singular behaviour of the quantum fluctuations in
arithmetic surfaces being the high multiplicity of the length
spectrum.
Similar phenomena are found for the quantized cat map \cite{KR, KRR}.

\subsection{Plan of the paper}
We end  with an outline of the paper and the proof of
Theorem~\ref{thm 1}.  
In \S~\ref{sec:periodconstrut} we give recall some background connecting the
dynamics on the
modular surface with the group structure on $\SL_2(\R)$. 
In \S~\ref{ladder} we show that up to a scalar multiple, there is at most one
linear
form on the smooth vectors of an irreducible unitary representation of
$\SL_2(\R)$ which is invariant under the action of the diagonal subgroup
(corresponding to the geodesic flow) and the element $\begin{pmatrix} 0&-1\\ 1&0
\end{pmatrix}$ corresponding to time-reversal symmetry. 
We show that such a linear form is determined by its value on a ``minimal''
vector - a spherical vector in the case of a principal series representation and
a lowest/highest weight vector for holomorphic/anti-holomorphic discrete series
representations. We then bring in invariance under orientation reversal and apply 
the results to show that the bilinear forms
$V^{sym}$ and $B$ are determined by their values on Maass forms and holomorphic modular forms. 

In \S\ref{sec:half integral weight}  we give present some background on
half-integral weight forms, and
in \S~\ref{sec:Rankin Selberg}  we discuss Rankin-Selberg theory for these, giving a
mean-square result for Fourier
coefficients along positive integers by modifying work of 
Matthes \cite{Matthes1} 
for weight zero forms. 

In \S~\ref{sec theta}  we review the results of Maass \cite{Maass}, Shintani
\cite{Shintani}, Kohnen \cite{Ko1,Ko2} and
Katok-Sarnak \cite{KS}, relating periods along closed  geodesics to Fourier
coefficients of theta-lifts. 
This allows us to express $\mu_d(F)$ in terms of Fourier coefficients of
half-integral weight forms on $\Gamma_0(4)$; the precise normalizations in terms
of the inner products of the forms and their $\theta$-lifts are crucial here. 
This is where the factor $L(\frac 12, \pi)$ appears. 
These results put us in a position to use the Rankin-Selberg theory of
\S~\ref{sec:Rankin Selberg}
to determine the variance $B$, which we do in \S~\ref{sec:proof of main thm}.

\subsection{Acknowledgments} 
We would like thank Akshay Venkatesh
for insightful discussions of the material related to this paper. 
Supported by NSF FRG Grant DMS-0554373 (Sarnak and Luo) and
by the Grant No 2006254 (Sarnak and Rudnick) from the United 
States-Israel
Binational Science Foundation (BSF), Jerusalem, Israel. 

\section{Background on periods} \label{sec:periodconstrut}

\subsection{The upper half-plane and its unit tangent bundle} 
We recall the hyperbolic metric on the tangent bundle of the upper
half-plane $\HH=\{z=x+iy:y>0\}$. 
We identify  the tangent space at $z\in \HH$ with the complex
numbers: $T_z\HH \simeq\C$. The hyperbolic metric on $T_z\HH$ is then given by 
$$\langle \xi,\eta\rangle_z := \frac{\Re(\xi\bar{\eta})}{y^2} $$
and the unit tangent bundle $S\HH$ is then identified with 
$$ \{(z,\zeta)\in \HH\times \C: |\zeta|=\Im(z) \}$$

\subsubsection{Isometries} 
A unimodular matrix $g=\begin{pmatrix}a&b\\c&d\end{pmatrix}\in
\SL_2(\R)$ acts  on the upper half-plane $\HH$ via $z\mapsto
(az+b)/(cz+d)$. Set 
$$j(g,z) = cz+d$$
The differential of the map is $g'(z)=(ad-bc)/(cz+d)^2
= (cz+d)^{-2}=1/j(g,z)^2$. The induced map on the tangent bundle
$T\HH$ is then  
$$(z,\xi)\mapsto (g(z), g'(z)\xi)$$
Note that this is an action: if $g,h\in \SL_2(\R)$ then
$g(h(z,\zeta)) = (gh)(z,\zeta)$.
A computation shows that we get an isometry of $\HH$: 
$$
\langle \xi,\eta\rangle_z  = \langle g'(z)\xi,g'(z)\eta\rangle_{g(z)}  
$$

\subsubsection{Group theory} 
Define matrices 
$$n(x)=\begin{pmatrix}
1&x\\ & 1\end{pmatrix},\quad a(y) = \begin{pmatrix}
y^{1/2}& \\ & y^{-1/2}\end{pmatrix}, \quad 
\kappa(\phi)=\begin{pmatrix}
\cos(\phi/2)& \sin(\phi/2)\\-\sin(\phi/2)&\cos(\phi/2)
\end{pmatrix}
$$
The rotation $\kappa(\phi)$ preserves the base point $i=\sqrt{-1}\in \HH$. 
Note that 
$$\kappa(\phi+2\pi) = -\kappa(\phi)$$
and thus we get the same element in $\PSL_2(\R)$. 

Setting $g_{x,y,\phi} = n(x)a(y)\kappa(\phi)$  we find 
$$g_{x,y,\phi}(i,i)=(x+iy,iye^{i\phi})$$
so that using the basepoint $(i,i)\in S\HH$ of the upward pointing
unit vector at $i=\sqrt{-1}\in \HH$, 
we get a bijection
$$ \PSL_2(\R)\simeq S\HH,\qquad g\mapsto g(i,i)$$


We may then identify functions on $\PSL_2(\R)$ and on $S\HH$: If
$F(z,\zeta)$ is a function on $S\HH$ we may define $\tilde F$ on
$\SL_2(\R)$ by 
$$\tilde F(g):= F(g(i,i))$$
so that $\tilde F(g_{x,y,\phi}) = F(x+iy,iye^{i\phi})$. 

\subsubsection{Geodesics} 
The geodesic flow on $S\HH$ is defined by $\Phi^t:(z,\zeta)\mapsto
(z(t),\zeta(t))$ being the endpoint of the (unit speed) 
geodesic starting at $z$ in direction $\zeta=iye^{i\phi}$. 
It turns out that on $\PSL_2(\R)$ the geodesic flow is multiplication
on the right by $\begin{pmatrix}e^{t/2}& \\ &e^{-t/2}\end{pmatrix}$,
that is  
$$
\Phi^t(z,\zeta) = g_{x,y,\phi}\begin{pmatrix}
e^{t/2}& \\ &e^{-t/2}\end{pmatrix}(i,i)
$$
Indeed, for an initial position $(z,\zeta)\in S\HH$, we write
$(z,\zeta) = g(i,i)$ and the the geodesic 
$\vec\gamma(t)=\Phi^t(z,\zeta)$ starting at $(z,\zeta)$ will
be the translate by $g_{x,y,\phi}$ of the geodesic $\vec \gamma_0(t)$ starting at the  initial
condition $(i,i)$: $\vec \gamma(t) = g_{x,y,\phi}\vec \gamma_0(t)$.  
A computation shows that 
$$
\vec\gamma_0(t)=(e^ti,e^ti) = 
\begin{pmatrix} e^{t/2}&\\&e^{-t/2}\end{pmatrix}(i,i)
$$
and therefore 
$$
\vec\gamma(t) = g_{x,y,\phi} \begin{pmatrix} e^{t/2}&\\&e^{-t/2}\end{pmatrix}(i,i)
$$

\subsubsection{Time-reversal symmetry} 
A fundamental symmetry of phase space $S\HH$ is time reversal
$(z,\zeta)\mapsto (z,-\zeta)$. Using it, one has a symmetry of the set
of geodesics, corresponding to reversing the orientation. 
In $\PSL_2(\R)$ it is given as $g\mapsto
gw$, where $w=\begin{pmatrix} 0&-1\\1&0\end{pmatrix}$. 
Indeed, if $(z,\zeta)=g(i,i)\in S\HH$ then 
$$(z,-\zeta) = g(i,-i) = g\begin{pmatrix} 0&-1\\1&0\end{pmatrix}(i,i)$$ 

\subsubsection{Orientation reversal} 
Another fundamental symmetry is orientation reversal 
$(z,\zeta)\mapsto (-\overline{z},-\overline{\zeta})$. On $\PSL_2(\R)$ it is
given by the map 
$$
g\mapsto \delta g\delta, 
\quad \delta = \begin{pmatrix} 1&0\\0&-1 \end{pmatrix}
$$

\subsubsection{$K$-types} 
Let $k$ be an integer. 
Suppose that $F:S\HH\to \C$ satisfies 
$$F(z,e^{i\alpha}\zeta) = e^{ik\alpha}F(z,\zeta)
$$
Then the corresponding function $\tilde F$ on $\PSL_2(\R)$ satisfies 
$$\tilde F(g \kappa(\alpha)) = e^{ik\alpha} \tilde F(g)$$
that is transforms under under the right action of the maximal compact
$K=SO(2)/\{\pm I\}$ via the
character $\kappa(\alpha)\mapsto e^{ik\alpha}$. 
As an example we start with a function $f$ on $\HH$ and define
$F_f(z,\zeta)=\zeta^kf(z)$. 


\subsection{Quotients}
Let $\Gamma\subset \PSL_2(\R)$ be a Fuchsian group 
, $M=\Gamma\backslash \HH$ and $SM$ the unit tangent bundle
to $M$. The identification $S\HH \simeq \PSL_2(\R)$ descends to an
identification  
$$
SM\simeq \Gamma\backslash S\HH \simeq \Gamma\backslash \PSL_2(\R)
$$

\subsubsection{Automorphy conditions} 
Let $k\geq 0$ be an integer, and $f:\HH\to \C$ is a function on the
upper half-plane satisfying the (weak) automorphy condition
\begin{equation}\label{aut cond}
f(\gamma(z)) = (cz+d)^{2k} f(z), \qquad 
\forall \gamma =\begin{pmatrix} a&b\\c&d  \end{pmatrix}\in \Gamma
\end{equation}
We define $F_f$ on $S\HH$ by 
$$F_f(z,\zeta):=\zeta^k f(z)$$
Then 
$$ F_f(\gamma(z,\zeta)) = F_f(z,\zeta),\qquad \forall \gamma\in \Gamma$$
that is $F_f$ is $\Gamma$-invariant so descends to a function on
$SM=\Gamma\backslash S\HH$, and via the identification $F\mapsto
\tilde F$ gives a $\Gamma$-invariant function $\tilde F_f$ on
$\PSL_2(\R)$: 
$$ \tilde F_f(\gamma g) = \tilde F_f(g),  \qquad \forall \gamma\in \Gamma$$

Moreover, $F_f$ has $K$-type $k$ since from the definition we find 
$$F_f(z,e^{i\alpha} \zeta) = (e^{i\alpha}\zeta)^k f(z) = e^{ik\alpha}
F_f(z,\zeta) $$
and therefore the function $\tilde F_f$ on the group $\PSL_2(\R)$
transforms under the right action of $K=SO(2)/\{\pm I\}$ by the
character $\kappa(\alpha)\mapsto e^{ik\alpha}$.

\subsubsection{Closed geodesics on $M$} 
We consider closed geodesics on $M$, that is an initial condition
$(z_0,\zeta_0)\in S\HH$ so that there is some $T>0$ and $\gamma\in
\Gamma$ with 
$$\Phi^T(z_0,\zeta_0) = \gamma(z_0,\zeta_0)$$
Writing $(z_0,\zeta_0) = g_0(i,i)$ for a unique $g_0\in \PSL_2(\R)$ we
find that 
$$ 
\Phi^T(z_0,\zeta_0) = 
g_0 \begin{pmatrix}e^{T/2}&\\&e^{-T/2}\end{pmatrix}(i,i) 
=\gamma g_0(i,i)
$$ 
and hence that
\begin{equation}\label{eq:gamma_0 and g_0}
\gamma = \pm g_0 \begin{pmatrix}e^{T/2}&\\ &e^{-T/2}\end{pmatrix} g_0^{-1}
\end{equation}
(the equality is in $\PSL_2(\R)$, that is the matrices agree up to a
sign). 

Changing the initial condition $(z_0,\zeta_0)$ to a
$\Gamma$-equivalent one $(z_1,\zeta_1)=\delta (z_0,\gamma_0)$,
$\delta\in \Gamma$ (so that we get the same point in
$SM=\Gamma\backslash S\HH$) replaces $\gamma$ by its conjugate
$\delta\gamma\delta^{-1}$. Thus we get a well-defined conjugacy class
$\gamma_C$ corresponding to the geodesic $C$. The conjugacy class
is {\em hyperbolic} as its trace satisfies $|\tr \gamma_C|  = 2\cosh(T/2)>2$.

\subsection{A correspondence with binary quadratic forms} 

An binary quadratic form $f(x,y)=ax^2+bxy+cy^2$ (also denoted by $[a,b,c]$) 
is called {\em integral} if $a,b,c$ are integers, and is {\em primitive} if
$\gcd(a,b,c)=1$. The discriminant of $f$ is $b^2-4ac$. 
The modular group $\SL_2(\Z)$ acts on the set of integral binary
quadratic forms by substitutions, and preserves the discriminant. 

There is a bijection between $\SL_2(\Z)$-equivalence classes of
primitive binary quadratic forms of positive (non-square)
discriminant and primitive hyperbolic conjugacy classes in
$\PSL_2(\Z)$ defined as follows: 
Given a primitive hyperbolic element 
$$ \gamma=\begin{pmatrix}a&b\\c&d\end{pmatrix}$$
the corresponding form is 
\begin{equation}\label{def of B}
 B(\gamma)=\frac{\sign(a+d)}{\gcd(b,d-a,-c)}[b,d-a,-c]
\end{equation}
which is primitive by definition, and has discriminant
$$
\disc(B(\gamma))= \frac{( \tr\gamma )^2-4}{\gcd(b,d-a,-c)^2}
$$
Moreover 
$$ B(-\gamma)=B(\gamma), \qquad B(\gamma^{-1}) = - B(\gamma)$$

Given a primitive integral binary quadratic form $f=[a,b,c]$ of positive
non-square discriminant $d=b^2-4ac$, let $(t_0,u_0)$ be the
fundamental solution of the Pell equation $t^2-du^2=4$ with
$t_0>0$, $u_0>0$ (which exists since we  assume $d>0$ is not a perfect
square). Define the matrix 
$$ 
\gamma(f):=
\begin{pmatrix}\frac{t_0-bu_0}2&au_0 \\-cu_0& \frac{t_0+bu_0}2\end{pmatrix}
$$
which is hyperbolic of trace $t_0=\sqrt{du^2+4}>2$, and is primitive. 
Then  $B(\gamma(f))=f$ and gives a bijection between primitive
hyperbolic conjugacy classes in $\PSL_2(\Z)$ and equivalence classes
of primitive binary quadratic forms of non-square positive discriminant. 

\subsection{Periods} 
Consider a (primitive, oriented) closed geodesic  on $M$; 
it is determined by a primitive
hyperbolic conjugacy class  $\gamma\in \Gamma$, 
Let $C$ be the lift of the closed geodesic to to the unit tangent bundle $SM$.
For any function $F$ on $SM$, we define the period of $F$ along
$C$ by choosing a point on the lifted geodesic
$(z_0,\zeta_0)$ (that is an initial condition) and setting
$$ \int_{C} F:=\int_0^T F\circ \Phi^t(z_0,\zeta_0) dt$$
where $T>0$ is the length of the geodesic, that is the first time that
$\Phi^T(z_0,\zeta_0)=\gamma(z_0,\zeta_0)$. 

\subsubsection{An alternative expression for the period} 
For a hyperbolic matrix  $\gamma=\begin{pmatrix}a&b\\c&d\end{pmatrix}$, 
define a binary quadratic form (not necessarily primitive) 
$$Q_{\gamma}(z) = cz^2+(d-a)z-b = j(\gamma,z)\left(z-\gamma(z) \right)$$
Note that $Q_{-\gamma} = -Q_{\gamma}$. 

The two zeros $w_\pm$ of $Q_{\gamma}$ are the the fixed points of
$\gamma$, which are the intersection with real axis of
the semi-circle in the upper half-plane which determines the
closed geodesic. 
By  \eqref{eq:gamma_0 and g_0}, 
the fixed points $w_\pm$ of $\gamma$ on the boundary are 
$g_0(0)$ and $g_0(\infty)$:  
Indeed,  $\gamma(w)=w$ iff 
$$\begin{pmatrix}e^{T/2}&\\ &e^{-T/2}\end{pmatrix} g_0^{-1}(w) =
g_0^{-1}(w)
$$
that is iff $e^Tg_0^{-1}(w)=g_0^{-1}(w)$, and since $T\neq 0$ this
forces $g_0^{-1}(w) = 0,\infty$. 
Thus we find 
$$
Q_{\gamma}(z) = C(z-g_0(0))(z-g_0(\infty))
$$

Let $f:\HH\to \C$ satisfy the automorphy condition \eqref{aut cond} 
of weight $2k$ for $\Gamma$, and set $F=F_f$, that is
$F(z,\zeta)=\zeta^k f(z)$ which is a $\Gamma$-invariant function on
$S\HH$, that is a function on $SM$, 
which transforms under $SO(2)$ with K-type $k$. 
Let 
$$
r_k(f,\gamma) = \int_{z_0}^{\gamma z_0} f(z)Q_{\gamma}(z)^{k-1}dz
$$
where $z_0$ lies on the semi-circle between the fixed points of $\gamma$ 
and the contour of integration\footnote{If $f$ is holomorphic, the
integral is independent of the contour} is along the geodesic arc linking
$z_0$ and $\gamma z_0$. 
 
Let  
$$D_{\gamma}:=\tr(\gamma)^2-4=\disc(Q_{\gamma})$$
be the discriminant of the quadratic form $Q_{\gamma}$. 
Then $r_k(f,\gamma)$ is simply related to the period of $f$ on the
geodesic defined by $\gamma$ \cite[proposition 4]{Katok}:  
\begin{equation}\label{Katok formula}
r_k(f,\gamma) =
\left(-\sign(\tr(\gamma))\sqrt{D_{\gamma}}\right)^{k-1}
\int_{C} F 
\end{equation}

Therefore in terms of the corresponding binary quadratic form
$B(x,y)=B(\gamma)(x,y)$ \eqref{def of B}, we get 
\begin{equation}\label{exp for period via forms} 
\int_{C} F = \frac 1{(\disc B)^{\frac{k-1}2}} 
\int_{z_0}^{\gamma z_0} f(z) B(1,-z)^{k-1}dz =:J(B)
\end{equation}
Note that the RHS above makes sense also for non-primitive forms, and 
is dilation invariant: $J(tB)=J(B)$.

%

\section{Symmetry considerations}\label{ladder}


\subsection{Background on the representation theory of $\SL_2(\R)$} 
Let $\pi$ be an irreducible infinite dimensional unitary
representation of $\SL_2(\R)$ on a Hilbert space $\hilb$ which
factors through $G=\PSL_2(\R)$. Let $K=SO(2)$, and 
let $\hilb^{(K)}$ be the space of $K$-finite vectors in $\hilb$, that
is vectors whose translates by $K$ span a finite dimensional subspace. 
Then $\hilb^{(K)}$ is dense in $\hilb$ and consists of smooth vectors,
and the Lie algebra $\mathfrak{sl}_2$ acts on $\hilb^{(K)}$ by
$d\pi$, the differential of the action of $G$.

According to
Bargmann's classification of such $\pi$'s (we follow the
exposition in Lang \cite{Lang}), there are orthogonal
one-dimensional subspaces $\hilb_n$, with $n$ even, which are
$K$-invariant and together span $\hilb^{(K)}$. To be more precise, we
consider two cases:

i) That there is no highest or lowest $K$-type, this being the
spherical, or Maass case:
\begin{equation}
\hilb^{(K)}=\bigoplus_{n \mbox{ even}} \hilb_n
\end{equation}
with $\hilb_n$ one dimensional for $n$ even, say
$\hilb_n=\C\phi_n$, and the $\phi_n$ satisfy
\begin{equation}\label{action of ladder}
\begin{split}
d\pi(W) \phi_n &= in \phi_n \\
d\pi (E^- ) \phi_n  &= (s+1-n) \phi_{n-2} \\
d\pi (E^+ ) \phi_n &= (s+1+n) \phi_{n+2}
\end{split}
\end{equation}
where
\begin{equation}\label{lie generators} 
H= \begin{pmatrix}1&0\\0&-1\end{pmatrix}, \qquad
V=\begin{pmatrix}0&1\\1&0\end{pmatrix}, \qquad
W=\begin{pmatrix}0&1\\-1&0\end{pmatrix}
\end{equation}
are  the standard basis of the Lie algebra $\mathfrak{sl}_2(\R)$,
\begin{equation}
E^\pm = H\pm iV
\end{equation}
are in the complexified Lie algebra
$\mathfrak{sl}_2(\C)$, and $d\pi(E^\pm)$ are the weight raising/lowering
operators.
Here $s\in \C$ is a parameter which, since we assume that $\pi$ is unitary, 
lies on the imaginary axis $i\R$ or in the interval $(-1,1)$. Note that since we are assuming the
representation factors through $G=\PSL_2(\R)$, only even weights appear.

ii) $\hilb$ has a lowest or highest $K$-type. In the first case there
is an even positive integer $m_0>0$ so that
\begin{equation}
\hilb^{(K)}=\bigoplus_{\substack{m=m_0\\ m\mbox{ even}}}^\infty \hilb_m
\end{equation}
with $\hilb_m$ one-dimensional, say  $\hilb _m=\C \phi_m $
and the $\phi_m$ satisfy \eqref{action of ladder} with $s=m_0-1$.
In particular, $\phi_{m_0}$ is annihilated by the lowering operator:
\begin{equation}\label{annihilation lowest wt}
d\pi(E^-) \phi_{m_0} = 0 \;.
\end{equation}
These $\pi$'s correspond to holomorphic forms of even weight.

In the case there is a highest $K$-type, there is a negative even
integer $m_0<0$ so that
\begin{equation}
\hilb^{(K)}=\bigoplus_{\substack{m=-\infty\\m\mbox{ even}}}^{m_0} \hilb_m\;.
\end{equation}
Again $\hilb_m=\C\phi_m$ for $m\leq m_0$ even, so that
$\phi_m$ satisfy \eqref{action of ladder}  with $s = -m_0-1$
and the highest weight vector $\phi_{m_0}$ is annihilated by the
raising operator:
\begin{equation}
d\pi(E^+) \phi_{m_0} = 0 \;.
\end{equation}

In case (i) we denote by $\phi_{\pi}$ the $K$-invariant (spherical)
vector $\phi_0$. We normalize it so that $\langle \phi_0,\phi_0
\rangle =1$ and then it is unique up to multiplication by a complex
scalar of unit modulus. In case (ii) we denote by $\phi_\pi$ the
similarly normalized lowest/highest weight vector $\phi_{m_0}$.
We will call these $\phi_\pi$'s ``minimal vectors'' of the
representation. 

\subsection{Linear forms}

We consider linear forms $\eta$ on $\hilb^{(K)}$ which are
invariant by the ``geodesic flow'' and ``time reversal symmetry'',  
that is 
\begin{itemize}
\item $\eta$ is annihilated by $d\pi(H)$,
where $H= \begin{pmatrix}1&0\\0&-1\end{pmatrix} \in \mathfrak{sl}_2$
is the infinitesimal generator of the group of diagonal matrices $A$:
\begin{equation}\label{action of dpi(H)} 
\eta(d\pi(H)v)=0,\qquad \forall v\in \hilb^{(K)}
\end{equation}
In this case we say that $\eta$ is $A$-invariant\footnote{This choice
of terminology is imprecise since $\pi(A)$ need not preserve the space
of $K$-finite vectors on which $\eta$ is a-priori defined}.    
\item $\eta$ is fixed by $\pi(\begin{pmatrix}0&1\\-1&0\end{pmatrix})$
\begin{equation}\label{action of w and eta}
\eta(\pi\begin{pmatrix}0&1\\-1&0\end{pmatrix}v) = \eta( v), 
\qquad \forall v\in \hilb^{(K)}\;.
\end{equation}
In this case we say that $\eta$ is invariant under time-reversal
symmetry. 
\end{itemize}
 
\begin{proposition}
Let $\pi$ be an irreducible infinite dimensional unitary
representation of $\SL_2(\R)$ on a Hilbert space $\hilb$ which
factors through $G=\PSL_2(\R)$. Then the space of linear forms 
$\eta$ on $\hilb^{(K)}$ invariant under $A$ and $\trs$ 
is at most one-dimensional, and any such form is completely
determined by its action on a ``minimal'' vector $\phi_\pi$. 
In the case (ii) of discrete series, the space of $A$-invariant forms is 
one-dimensional, 
when $m=2\mod 4$ none of them is $\trs$-invariant, and if $m=0\mod 4$ 
then any $A$-invariant form is automatically $\trs$-invariant. 
In the spherical case the space of linear forms invariant under $A$ and $\trs$ 
is one dimensional.
\end{proposition}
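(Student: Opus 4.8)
The plan is to work entirely inside the Lie algebra action on the $K$-finite vectors $\hilb^{(K)} = \bigoplus_n \hilb_n$, using the ladder relations \eqref{action of ladder}. A linear form $\eta$ is determined by the scalars $\eta(\phi_n) = c_n$, and the two invariance conditions translate into recursions on the $c_n$. First I would rewrite the condition \eqref{action of dpi(H)} that $\eta$ is killed by $d\pi(H)$ in terms of the raising/lowering operators: since $H = \tfrac12(E^+ + E^-)$, the equation $\eta(d\pi(H)\phi_n)=0$ reads
\begin{equation}
(s+1+n)\,c_{n+2} + (s+1-n)\,c_{n-2} = 0 .
\end{equation}
This is a two-step recursion coupling $c_{n+2}$ to $c_{n-2}$; in particular it decouples the even-index and odd-index (in units of $2$) subfamilies, i.e. those $n\equiv 0 \bmod 4$ and those $n\equiv 2 \bmod 4$. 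In the Maass case every $c_n$ is then determined by $c_0$ alone (together with the vanishing of the "$n\equiv 2 \bmod 4$" family forced below), and in the discrete-series case every $c_m$ is determined by $c_{m_0}$; this already gives the "at most one-dimensional" and "determined by the minimal vector $\phi_\pi$" statements, provided the relevant denominators $(s+1+n)$ never vanish on the support of the ladder, which one checks from the constraints on $s$ (imaginary axis or $(-1,1)$ in the spherical case, $s = |m_0|-1$ in the discrete case).

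Next I would bring in the time-reversal condition \eqref{action of w and eta}. Here I would use that $w = \kappa(\pi)$ is a specific element of $K=SO(2)$, so $\pi(w)$ acts on the $K$-type-$n$ vector $\phi_n$ by the scalar $e^{i n \pi/2} = i^{n}$ (reading off the $K$-character from $d\pi(W)\phi_n = in\phi_n$, hence $\pi(\kappa(\theta))\phi_n = e^{in\theta/2}\phi_n$ and $\theta = \pi$). Thus \eqref{action of w and eta} says $i^{n} c_n = c_n$ for all $n$, i.e. $c_n = 0$ unless $n \equiv 0 \bmod 4$. Combined with the first recursion this kills the entire $n\equiv 2\bmod 4$ family automatically, so in the spherical case the $A$-invariant forms already satisfying this are exactly the multiples of the form with $c_0 \neq 0$, giving the one-dimensional conclusion there. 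In the discrete-series case the surviving weights are $m_0, m_0+2, \dots$; if $m_0 \equiv 0 \bmod 4$ then $i^{m}=1$ for every weight in the ladder, so time-reversal invariance is vacuous and every $A$-invariant form (a one-dimensional space, by the recursion) is automatically $\trs$-invariant; if $m_0 \equiv 2\bmod 4$ then $i^{m} = -1$ on every weight, so the only $\trs$-invariant form is $\eta = 0$, i.e. none of the nonzero $A$-invariant forms is $\trs$-invariant. This matches the three assertions of the proposition.

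The one genuinely delicate point — the main obstacle — is the \emph{existence} half of the "one-dimensional" claims (as opposed to "at most one-dimensional"): one must exhibit a nonzero $A$- and $\trs$-invariant linear form on $\hilb^{(K)}$, i.e. check that the recursion produces a consistent assignment of the $c_n$ with no hidden obstruction, and — if one wants $\eta$ to extend to a continuous functional on smooth vectors rather than merely a linear form on the algebraic span $\hilb^{(K)}$ — that the resulting $c_n$ grow at most polynomially in $n$. For this I would solve the recursion explicitly: from $(s+1+n)c_{n+2} = -(s+1-n)c_{n-2}$ one gets a closed product formula for $c_{4m}/c_0$ (resp.\ $c_{m_0+4m}/c_{m_0}$) as a ratio of shifted factorials / Gamma factors, and Stirling's formula shows the ratio of consecutive terms tends to $1$, giving sub-exponential (in fact polynomially bounded) growth. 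The cleanest route, which I would prefer, is to identify this $\eta$ with an object already known to exist — the restriction of a Liouville-type or period integral to $\hilb^{(K)}$, or equivalently to invoke that the geodesic-flow-invariant functional coming from $V$ (respectively the arithmetic one) is nonzero on the minimal vector — but to keep the section self-contained I would instead just verify the growth bound directly from the explicit solution. Everything else is a routine unwinding of \eqref{action of ladder} and the $K$-character computation.
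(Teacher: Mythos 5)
Your proposal follows essentially the same route as the paper: encode $\eta$ by the scalars $c_n=\eta(\phi_n)$, turn $A$-invariance into the two-step recursion coming from $2H=E^++E^-$, and turn $\trs$-invariance into the eigenvalue computation $\pi(w)\phi_n=i^n\phi_n$ via $w=\exp(\frac{\pi}{2}W)$. The uniqueness claims, the check that the coefficients $s+1+n$ never vanish on the relevant range, and the spherical case are all handled exactly as in the paper's proof.

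One statement in your discrete-series discussion is wrong as written and needs repair. The ladder of a lowest-weight representation contains \emph{all} even weights $m_0, m_0+2, m_0+4,\dots$, so $i^m$ is not constant on it: it alternates between $+1$ and $-1$ along the ladder, whatever the residue of $m_0$ mod $4$. Hence ``time-reversal invariance is vacuous'' is not literally true when $m_0\equiv 0 \bmod 4$, and ``$i^m=-1$ on every weight'' is false when $m_0\equiv 2\bmod 4$. The correct argument is already available from your own recursion: the boundary relation $d\pi(E^-)\phi_{m_0}=0$ gives $2d\pi(H)\phi_{m_0}=2m_0\phi_{m_0+2}$, hence $c_{m_0+2}=0$, and the recursion propagates this to $c_m=0$ for all $m\equiv m_0+2\bmod 4$; so the (one-dimensional space of) $A$-invariant forms is supported on $m\equiv m_0\bmod 4$. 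When $m_0\equiv 0\bmod 4$ this support avoids the weights $m\equiv 2\bmod 4$ on which $\trs$ would impose $c_m=-c_m$, so $\trs$-invariance is automatic; when $m_0\equiv 2\bmod 4$ the support is exactly the set of weights with $i^m=-1$, so $\trs$-invariance forces $c_{m_0}=0$ and kills the form. This is precisely the paper's argument, so the fix is immediate. Finally, your worry about polynomial growth of the $c_n$ is not needed for the proposition as stated: $\hilb^{(K)}$ is the algebraic span of the $\phi_n$, so any consistent solution of the recursion defines a linear form on it; the growth estimate would only matter for extending $\eta$ to all smooth vectors, which is not claimed.
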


This is shown by giving an explicit formula for
$\eta(\phi_n)$ in term of $\eta(\phi_\pi)$. Since the cases (i) and
(ii) have slightly different features we deal with them separately.

In case (i), $\phi_\pi=\phi_0$ is the spherical vector. We are
assuming that $\eta$ is invariant under time-reversal symmetry, that
is \eqref{action of w and eta} holds. 
Since
$$
\pi\begin{pmatrix}0&1\\-1&0\end{pmatrix} \phi_n =
-\phi_n$$
if $n\equiv 2 \mod 4$, due to \eqref{action of ladder} and
$\begin{pmatrix} 0&1\\-1&0\end{pmatrix} = \exp(\frac \pi 2 W)$,
it follows from \eqref{action of w and eta} that
\begin{equation}\label{eta on phi_n, n=2 mod 4}
\eta(\phi_n) = 0, \quad\mbox{if } n\equiv 2 \mod 4 \;.
\end{equation}
Now $2H=E^+ + E^-$ and from \eqref{action of ladder} we have
\begin{equation*}
\begin{split}
2d\pi(H)\phi_n &=\left( d\pi(E^+) + d\pi(E^-) \right) \phi_n  \\
&=(s+1-n)\phi_{n-2} + (s+1+n) \phi_{n+2} \;.
\end{split}
\end{equation*}
Hence
\begin{equation}\label{eta composed with H}
\eta(2d\pi(H)\phi_n) = (s+1-n)\eta(\phi_{n-2}) +
(s+1+n)\eta(\phi_{n+2}) \;.
\end{equation}
But the LHS of \eqref{eta composed with H} is zero since we are
assuming \eqref{action of dpi(H)}. Hence for $n$ even and in
particular $n\equiv 2\mod 4$ we have
\begin{equation*}
(n-s-1)\eta(\phi_{n-2}) = (n+s+1)\eta(\phi_{n+2}) \;.
\end{equation*}
It follows that for $m\geq 4$, $m\equiv 0\mod 4$ that
\begin{equation}\label{eta on phi_m}
\eta(\phi_m) = \eta(\phi_{-m}) = \frac{(1-s)(5-s)\cdot \dots \cdot
(m-3-s)}{(3+s)(5+s)\cdot \dots \cdot (m-1+s)} \eta(\phi_0) \;.
\end{equation}
This together with \eqref{eta on phi_n, n=2 mod 4} determines $\eta$
on $\hilb^{(K)}$ explicitly in terms of $\eta(\phi_0)$.

Conversely, \eqref{eta on phi_n, n=2 mod 4} and \eqref{eta on phi_m}
with $\eta(\phi_0)=1$ define a unique $A$- and $\trs$-invariant linear form on
$\hilb$, which we denote by $\xi_{\pi,\phi_\pi}$.
So in this case the space of such  linear forms is
one-dimensional and any such form $\eta$ satisfies
\begin{equation*}
\eta = \eta(\phi_0)\xi_{\pi,\phi_\pi} \;.
\end{equation*}

We turn to case (ii) and show that the space of $A$-invariant linear forms on
$\hilb^{(K)}$ is one-dimensional.
Consider say the lowest weight case: Take the
lowest weight vector $\phi_{m_0}$, $m_0>0$ even. 
From \eqref{action of ladder} and \eqref{annihilation lowest wt} we have
\begin{equation*}
2d\pi(H)\phi_{m_0} = \left(d\pi(E^+)+d\pi(E^-) \right) \phi_{m_0} =
2m_0\phi_{m_0+2} \;.
\end{equation*}
Hence assuming $\eta$ is $A$-invariant we get that
\begin{equation*}
\eta(\phi_{m_0+2}) = 0 \;.
\end{equation*}
Furthermore for $m>m_0$ even and by \eqref{action of ladder} we have
\begin{equation*}
(m-m_0) \eta(\phi_{m-2}) = (m+m_0) \eta(\phi_{m+2}) \;.
\end{equation*}
Hence
\begin{equation}\label{lowest wt case 2 mod 4}
\eta(\phi_m)=0,\qquad \mbox{ for } m\geq m_0,\quad m\equiv m_0+2\mod 4
\end{equation}
and
\begin{equation}\label{lowest wt case 0 mod 4}
\eta(\phi_{m_0+k}) = \frac{1\cdot 3 \cdot 5\cdot\dots \cdot(\frac
k2-1)}{(m_0+1)\cdot (m_0+3)\cdot \dots \cdot (m_0+\frac k2 -1)}
\eta(\phi_{m_0})
\end{equation}
for  $k\equiv 0\mod 4$, $k\geq 4$.

Thus the space of $A$-invariant linear forms on $\hilb^{(K)}$ is
one-dimensional. It is spanned by $\xi_{\hilb_\pi,\phi_\pi}$ where
$\xi_{\hilb_\pi,\phi_\pi}(\phi_{m_0})=1$ and is defined by
\eqref{lowest wt case 2 mod 4} and \eqref{lowest wt case 0 mod
4}. Again any $A$-invariant linear form $\eta$ on $\hilb^{(K)}$ satisfies
\begin{equation}\label{unique determination lowest}
\eta = \eta(\phi_\pi) \xi_{\hilb_\pi,\phi_\pi} \;.
\end{equation}
The case of highest weight vectors and $A$-invariant forms is the
same.

If we now impose the further condition  that $\eta$ be $\trs$-invariant
in these case (ii) representations, then invariance under
time-reversal symmetry gives as in
\eqref{eta on phi_n, n=2 mod 4} that
\begin{equation*}
\eta(\phi_m) =0,\qquad \mbox{for } m\equiv 2\mod 4\;.
\end{equation*}
This coupled with \eqref{lowest wt case 2 mod 4} means that if
$m_0\equiv 2\mod 4$ then $\eta=0$. That is of $m_0\equiv 2\mod 4$ then
there is no non-zero linear form invariant under $A$ and $\trs$.

If $m_0\equiv 0\mod 4$ then from our discussion, every $A$-invariant
linear form is automatically $\trs$-invariant and in this case such 
linear forms satisfy \eqref{unique determination lowest}.

\subsection{Orientation reversal symmetry}
We now examine the role of an additional possible symmetry, ``orientation reversal'' 
$\ors$. 
It need not act on irreducible representations of $\PSL_2(\R)$. 
What we do is given an irreducible unitary representation $\pi$ on a Hilbert space $\hilb$, we 
consider Hilbert spaces $\UU$ which in the spherical case is the original representation $\hilb$,  
and in the case of the discrete series $\hilb^m$ where there is a lowest weight vector of weight $m>0$,  
we define 
$$\UU =\hilb^{+m} \oplus \hilb^{-m}$$ 
to be the direct sum of the irreducible representations with lowest weight $m$ and that with 
highest weight $-m$. We write $\UU^{(K)}$ for the dense subspace of $K$-finite vectors in $\UU$.

An orientation-reversing symmetry of $\UU$ is a unitary map $\ors$ of $\UU$ which is an involution, that is 
\begin{equation}\label{r2I}
 \ors^2=I
\end{equation}
satisfying 
\begin{equation}\label{rW}
 \ors\pi(W) = -\pi(W)\ors 
\end{equation}
and
\begin{equation}\label{rE+}
 \ors\pi(E^+)=\pi(E^-)\ors \;.
\end{equation}
As a consequence of \eqref{rE+} and \eqref{r2I} we have
\begin{equation}\label{rE-}
 \ors\pi(E^-)=\pi(E^+)\ors \;.
\end{equation}
Moreover $\ors$ commutes with the $A$-action whose infinitesimal generator is $H=\frac 12(E^+ +E^-)$ 
by \eqref{rE+}, \eqref{rE-},  
and with time-reversal symmetry, that is with\footnote{The first $\pi$ is the constant $3.1415\dots$!}  
$ \exp(\frac \pi 2 \pi(W))$  by virtue of \eqref{rW}. 

As our basic example we consider the orientation reversal involution 
on the function space $L^2(\Gamma\backslash G)$ given by 
$$ 
\ors f(x):=f(\delta x\delta^{-1}), \quad 
\delta=\begin{pmatrix}1&0\\0&-1 \end{pmatrix} \;.
$$
The relations \eqref{rW}, \eqref{rE+} hold since for the Lie algebra elements $H$, $V$ and $W$ of \eqref{lie generators} 
we have 
$$ 
\delta W=-W\delta,\quad \delta H=H\delta, \quad \delta V=-V\delta \;.
$$

\subsection{Action of $\ors$ on weight vectors} 
We first note that due to the commutation relation \eqref{rW}, 
$\ors$ must reverse weights, that is 
$$ \ors \phi_n = c_n\phi_{-n} \;. 
$$
with $|c_n|=1$ since $\ors$ is unitary, and $c_nc_{-n}=1$ since $\ors^2=I$.  
In particular, in the spherical case when there is a vector $\phi_0$ of weight $0$, we must have 
\begin{equation} \label{parity} 
 \ors \phi_0=\epsilon \phi_0, \quad \epsilon = \pm 1 \;.
\end{equation}
We say the spherical representation $\UU$ is {\em even} if if the sign is $+$, and {\em odd} if the sign is $-$. 

In the case of the discrete series representations $\UU = \hilb^{+m}\oplus \hilb^{-m}$, $m>0$,  
we choose a lowest weight vector $\phi_m\in \hilb^{+m}$ of unit length, which we call the 
minimal (or generating) vector, that $\ors \phi_m$ is a unit vector of weight $-m$, and normalize  
a choice of highest weight vector of unit length by taking 
\begin{equation}
  \phi_{-m}:=\ors \phi_m \;.
\end{equation}
We claim that the choice of minimal vector $\phi_0$ in the spherical case 
and $\phi_m$ in the discrete series case uniquely determine $\ors$. 

Indeed, starting with  $\phi_m$, the lowest weight vector for $m>0$, 
we get from \eqref{action of ladder} for $k>0$ 
$$ 
\phi_{m+2k} = \frac 1{c(s;m,k)} \pi(E^+)^k \phi_m , \quad c(s;m,k) = \prod_{j=0}^{k-1} (s+m+1+2j)
$$
and for the highest weight vector $\phi_{-m}=\ors \phi_m$
\begin{equation*} 
 \phi_{-m-2k} = \frac 1{c(s;m,k)} \pi(E^-)^k \phi_{-m} 
\end{equation*}
Therefore using \eqref{rE+}
\begin{equation*}
\begin{split}
 \ors \phi_{m+2k} & = \frac 1{c(s;m,k)} \ors \pi(E^+)^k \phi_m \\
&=\frac 1{c(s;m,k)} \pi(E^-)^k \ors \phi_m \\
&= \frac 1{c(s;m,k)} \pi(E^-)^k   \phi_{-m} = \phi_{-m-2k}
\end{split}
\end{equation*}
and likewise
\begin{equation*}
 \ors \phi_{-m-2k} = \phi_{m+2k}
\end{equation*}
That is for the discrete series $\ors$ exactly interchanges $\phi_n$ and $\phi_{-n}$:
\begin{equation}
 \ors \phi_n = \phi_{-n}, \quad |n|\geq m, \quad n=m\mod 2
\end{equation}

In the case of the spherical representations, the same analysis shows that 
\begin{equation}
 \ors \phi_n = \epsilon \phi_{-n}, \quad n\in 2\Z
\end{equation}
where $\epsilon=\pm 1$ is determined by \eqref{parity}.

\subsection{$\ors$-invariant functionals} 
If $\eta$ is a linear functional on $\UU$, invariant under the action of $A$ 
and time-reversal symmetry $\trs$, then the functional $\eta^\ors:v\mapsto \eta(\ors v)$ 
is also invariant under $A$ and $\trs$ since $\ors$ commutes with $A$ and with $\trs$. 
We wish to determine when $\eta^\ors=\eta$. 
\begin{proposition}
The space of   linear functionals on $\UU^{(K)}$ which are invariant under $A$, $\trs$ and $\ors$ 
is at most one dimensional. 
In the spherical case there are no such functionals for odd representations, 
and the space is one-dimensional in the even case, every   functional invariant under $A$ and $\trs$ 
being automatically $\ors$-invariant.  
For the discrete series there are no such functionals for weight $m=2\mod 4$, 
 and for weight $m=0\mod 4$ the space of $A$-invariant functionals is two-dimensional, 
each is automatically invariant under $\trs$ and the subspace of 
$\ors$-invariant functionals is one-dimensional. 
\end{proposition}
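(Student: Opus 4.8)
The plan is to deduce this Proposition entirely from the previous Proposition (on $A$- and $\trs$-invariant functionals) together with the explicit description, obtained above, of how $\ors$ permutes the weight vectors $\phi_n$. Throughout we use that $\ors$ preserves $\UU^{(K)}$, since it sends $\phi_n$ to a unit scalar times $\phi_{-n}$, so that $\eta\mapsto\eta^\ors$ with $\eta^\ors(v):=\eta(\ors v)$ is a well-defined involution on linear functionals that commutes with the operations of restricting to the $A$-invariant and $\trs$-invariant subspaces, because $\ors$ commutes with $A$ and with $\trs$.

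\emph{The spherical case.} By the previous Proposition the $A$- and $\trs$-invariant functionals on $\hilb^{(K)}=\UU^{(K)}$ form a one-dimensional space, spanned by the form $\xi:=\xi_{\pi,\phi_\pi}$ normalized by $\xi(\phi_0)=1$, whose values are listed in \eqref{eta on phi_n, n=2 mod 4} and \eqref{eta on phi_m}. The crucial point is that these formulas are invariant under $n\mapsto -n$: indeed \eqref{eta on phi_m} explicitly records $\xi(\phi_m)=\xi(\phi_{-m})$, and \eqref{eta on phi_n, n=2 mod 4} is symmetric as well. Hence, using $\ors\phi_n=\epsilon\phi_{-n}$ from the computation of the $\ors$-action,
\[
\xi^\ors(\phi_n)=\xi(\ors\phi_n)=\epsilon\,\xi(\phi_{-n})=\epsilon\,\xi(\phi_n),
\]
so $\xi^\ors=\epsilon\,\xi$. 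Therefore $\xi$ is $\ors$-invariant precisely when $\epsilon=+1$, i.e. in the even case; in the odd case the only functional invariant under $A$, $\trs$ and $\ors$ is $0$. Since the space was already at most one-dimensional, this settles the spherical assertions.

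\emph{The discrete series case.} Write $\UU=\hilb^{+m}\oplus\hilb^{-m}$ with $m>0$ even, and decompose a functional $\eta$ on $\UU^{(K)}$ as $\eta=\eta_++\eta_-$ according to the direct sum. Since $\hilb^{\pm m}$ are $G$-stable, $d\pi(H)$ preserves each summand, so $A$-invariance of $\eta$ is equivalent to $A$-invariance of both $\eta_+$ and $\eta_-$; by the previous Proposition each of these is one-dimensional, so the $A$-invariant functionals on $\UU^{(K)}$ form a two-dimensional space. The same Proposition shows that for $m\equiv 2\mod 4$ there is no nonzero $A$- and $\trs$-invariant functional on $\hilb^{+m}$ (nor on $\hilb^{-m}$), forcing $\eta=0$; while for $m\equiv 0\mod 4$ every $A$-invariant functional on $\hilb^{\pm m}$ is automatically $\trs$-invariant, so imposing $\trs$-invariance leaves the two-dimensional space intact. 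Finally, from $\ors\phi_n=\phi_{-n}$ we see that $\ors$ interchanges $\hilb^{+m}$ and $\hilb^{-m}$, so $\eta^\ors$ restricted to $(\hilb^{+m})^{(K)}$ equals $\eta_-\circ\ors$ and restricted to $(\hilb^{-m})^{(K)}$ equals $\eta_+\circ\ors$. As $\ors$ commutes with $A$ and $\trs$, these are again $A$- and $\trs$-invariant, and $\eta^\ors=\eta$ is equivalent to the single relation $\eta_-=\eta_+\circ\ors$ (the companion relation $\eta_+=\eta_-\circ\ors$ follows by applying $\ors$ and using $\ors^2=I$). Thus an $\ors$-invariant $\eta$ is determined by the choice of $\eta_+$ in the one-dimensional space of $A$-invariant functionals on $\hilb^{+m}$, and any nonzero such choice yields a nonzero $\ors$-invariant functional; hence this subspace is exactly one-dimensional.

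There is no essential obstacle here, as the representation-theoretic content was already established in the previous Proposition and in the $\ors$-action computation; the only point that needs care is the bookkeeping of the three symmetries simultaneously, keeping the cases $m\equiv 0\mod 4$ and $m\equiv 2\mod 4$ separate, and in particular observing that the explicit formula for $\xi$ is invariant under $n\mapsto-n$ — this is exactly what makes $\ors$-invariance automatic in the even spherical case and what fails in the odd case.
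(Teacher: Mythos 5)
Your proof is correct and follows essentially the same route as the paper: everything is reduced to the previous proposition together with the computed action of $\ors$ on weight vectors, with the spherical case handled by showing $\xi^{\ors}=\epsilon\xi$ and the discrete-series case by noting that $\ors$ swaps the two one-dimensional spaces of $A$-invariant functionals on $\hilb^{\pm m}$, cutting the two-dimensional space down to the diagonal. The only cosmetic difference is that in the spherical case you verify $\xi^{\ors}=\epsilon\xi$ directly on every weight vector via the $n\mapsto -n$ symmetry of the explicit formula, whereas the paper deduces $\eta^{\ors}=\pm\eta$ from one-dimensionality and $\ors^2=I$ and then pins down the sign by evaluating at $\phi_0$; both are valid.
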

\begin{proof}
 We start with the spherical case. There is a one-dimensional space of functionals 
invariant under $A$ and $\trs$, and we take the unique one satisfying 
$$ \eta(\phi_0)=1\;.
$$
Hence $\eta^\ors$, being itself invariant under $A$ and $\trs$, must be a multiple of $\eta$, 
and because $\ors^2=I$ we have 
$$ 
\eta^\ors = \pm \eta\;.
$$
We claim the sign is determined by the sign in \eqref{parity}, that is if 
$\ors\phi_0 = \epsilon \phi_0$ then 
$$\eta^\ors = \epsilon \eta \;.
$$
It suffices to check this on the spherical vector $\phi_0$, that is to show $\eta^\ors(\phi_0) = \epsilon$. 
Indeed, we have 
$$ \eta^\ors(\phi_0) = \eta(\ors\phi_0) = \eta(\epsilon(\phi_0) = \epsilon \eta(\phi_0)=\epsilon$$
as required. Thus in the odd case $\eta^\ors = -\eta$ and there are no $A$- and $\trs$-invariant 
functionals which are $\ors$-invariant, and in the even case $\eta^\ors=\eta$ and every 
$A$- and $\trs$-invariant functional is automatically $\ors$-invariant.

In the discrete series case, there are no functionals invariant under $A$ and $\trs$ 
if $m=2\mod 4$, hence we only consider the case $m=0\mod 4$. 
In that case there are unique $A$-invariant functionals $\eta_+$ on $\hilb^{+m}$ 
and $\eta_{-}$ on $\hilb^{-m}$ satisfying 
$$\eta_+(\phi_m)=1, \quad \eta_-(\phi_{-m})=1$$
and these are automatically invariant under time reversal symmetry. 
Hence the space of  $A$-invariant functionals on $\UU = \hilb^{+m}\oplus \hilb^{-m}$ is two dimensional, 
consisting of linear combinations
$$ \eta =c_+ \eta_+ \oplus c_-\eta_- $$
and these are automatically invariant under time reversal symmetry. 
They are uniquely determined by their action on the  lowest and highest weight vectors 
$\phi_m$ and $\phi_{-m} = \ors \phi_m$:
$$c_{\pm} = \eta(\phi_{\pm m})$$

Since $\eta^\ors$ is also $A$-invariant, we have  
$$ \eta^\ors = c'_+\eta_+ + c'_-\eta_-$$  
Now $\eta^\ors=\eta$ if and only if $c'_+=c_+$ and $c'_-=c_-$.  
We claim that this happens if and only if $c_+=c_-$, 
which will show that the space of $A$-invariant functionals which are $\ors$-invariant 
is exactly one-dimensional in this case. 
Indeed, we have 
$$ c'_+=\eta^\ors(\phi_m) = \eta(\ors\phi_m) = \eta(\phi_{-m}) = c_-$$
$$ c'_- = \eta^\ors(\phi_{-m}) = \eta(\ors \phi_{-m}) = \eta(\phi_m) = c_+$$
and so $c'_\pm=c_\pm$ if and only if $c_+=c_-$ as claimed. 
\end{proof}

\subsection{Bilinear forms}
We apply the above uniqueness of linear forms to bi-invariant
sesqui-linear forms on $\UU\times \UU$.
Let $T(v,v')$ be such a
form, that is linear in $v$, conjugate-linear in $v'$ and invariant under 
$A$, $\trs$ and $\ors$ in each variable separately. 
For instance, we can take 
$$ T(v,v') = \sum_{j=1}^J \eta_j(v)\overline{\eta'_j(v')}$$
where $\eta_j$, $\eta'_k$  are invariant linear forms. 
 From the prior discussion,
\begin{equation*}
T(v,v')=0
\end{equation*}
if $\pi$ is of type (ii) with $m_0\equiv 2\mod 4$. Otherwise $T$ is
completely determined by value $T(\phi_\pi,\phi_\pi)$ at the minimal vector $\phi_\pi$. 
In fact $T$ is the product of linear forms
\begin{equation*}
T(v,v') = T(\phi_\pi,\phi_\pi) \xi_{\UU,\phi_\pi}(v)
\overline{\xi_{\UU,\phi_\pi}(v')} \;.
\end{equation*}
where $\xi_{\UU,\phi_\pi}$ is the unique invariant linear form taking value $1$ at the minimal vector $\phi_\pi$. 

\subsection{Application to the classical and arithmetic variances}
We apply these remarks to the measures $\mu_d$ and to 
the classical variance $V$. 
We consider the discrete decomposition of the regular representation of 
$G=\PSL_2(\R)$ on $L^2_{cusp}(\Gamma\backslash G)$. 
For an irreducible sub-representation, form the space $\UU_\pi$ as above. 

\subsubsection{} 
The arithmetic measure  $\mu_d$ is a   linear form on $\UU_\pi$ invariant 
under $A$, $\trs$ and $\ors$. 
Hence $\mu_d(F)\equiv 0$ if $\pi$ is a
discrete series with weight $m_0\equiv 2\mod 4$ and otherwise
\begin{equation}\label{eq 3.24'}
\mu_d = \mu_d(\phi_\pi) \xi_{\UU_\pi,\phi_\pi} \;.
\end{equation}
Hence if $F_1$ and $F_2$ are in $\UU_{\pi_1}$ and $\UU_{\pi_2}$ the
sesqui-linear $\mu_d$ sums take the form
\begin{equation*}
\sum_{d\leq Y} \frac{\mu_d(F_1)}{\sqrt{d}}\frac{\mu_d(F_2)}{\sqrt{d}}
= \xi_{\UU_{\pi_1},\phi_{\pi_1}}(F_1)
\overline{\xi_{\UU_{\pi_2},\phi_{\pi_2}}(F_2)}
\sum_{d\leq Y} \frac{\mu_d(\phi_{\pi_1}) \mu_d(\phi_{\pi_2})}{d} \;.
\end{equation*}
This gives a universal reduction for computation of the variance of
$\mu_d$ to the cases $F_1=\phi_{\pi_1}$, $F_2=\phi_{\pi_2}$.

\subsubsection{}  
The classical variance $V$ is by its definition diagonalized by 
the irreducibles in the decomposition of $L^2(\Gamma\backslash G)$. 
We define projections onto the set of $\trs$-invariant functions 
$$
F^{even}:= \frac 12(F+F^\trs)
$$
and onto the set of functions invariant under both $\trs$ and $\ors$, 
$$
F^{sym} := \frac 14(F+F^\trs +F^\ors +F^{\trs \ors}) = \frac 12(F^{even}+(F^{even})^\ors)
$$
Set 
$$V^{ev}(F_1,F_2) = V(F_1^{even},F_2^{even}), \quad V^{sym}(F_1,F_2) = V(F_1^{sym}, F_2^{sym})
$$
We wish to completely determine $V^{sym}$ and $V^{even}$. 

For an irreducible $\pi$, $V^{sym}$ vanishes on $\UU_\pi$ if $\pi$ is discrete series of weight 
$m_0\equiv 2\mod 4$ and otherwise is given by
\begin{equation*}
V^{sym}(v,v') = V^{sym}(\phi_\pi,\phi_\pi)
\xi_{\UU_{\pi},\phi_{\pi}}(v) \overline{\xi_{\UU_{\pi},\phi_{\pi}}(v')} \;.
\end{equation*}
It remains to determine $V^{sym}(\phi_\pi,\phi_\pi)$. 

\begin{lemma}
 i) For  $\pi$ spherical  with parameter $s=ir$,  
\begin{equation}\label{determine V^sym spherical}
 V^{sym}(\phi_0,\phi_0) =
\frac{|\Gamma(\frac 14+ir)|^4}{2\pi |\Gamma(\frac 12+2ir)|^2} \langle \phi_0,\phi_0 \rangle
\end{equation}

ii) For $\pi$ discrete series of weight $m=0\mod 4$ ($m>0$),
\begin{equation}\label{determine V^sym discrete}
 V^{sym}(\phi_m,\phi_m) =\frac 12 2^{m}\EB(\frac{m}2,\frac{m}2)
\end{equation}
where $\EB$ is Euler's beta function. 
\end{lemma}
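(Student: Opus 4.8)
The plan is to compute $V^{sym}(\phi_\pi,\phi_\pi)$ directly from the definition of the classical variance $V$, using the explicit realization of $\phi_\pi$ inside $L^2(\Gamma\backslash G)$ and the Plancherel-type unfolding of the autocorrelation integral. Recall that for $F_1,F_2$ of mean zero,
\begin{equation*}
V(F_1,F_2)=\int_{-\infty}^\infty \left(\int_{\Gamma\backslash G}F_1(x)\overline{F_2(\Phi^t x)}\,dx\right)dt,
\end{equation*}
and that $\Phi^t$ is right multiplication by $a(e^{-t})$ (or its conjugate), so the inner integral is the matrix coefficient $\langle \pi(a(e^{-t}))F_1,F_2\rangle$ within the irreducible subspace $\UU_\pi$. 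Since $\phi_\pi$ is already $\trs$- and $\ors$-fixed up to the sign $\epsilon$ (in the spherical even case) or lies in a space where $V^{sym}$ is nonzero (discrete series $m\equiv 0\bmod 4$), we have $\phi_\pi^{sym}=\phi_\pi$ on the nose in the relevant cases, so $V^{sym}(\phi_\pi,\phi_\pi)=V(\phi_\pi,\phi_\pi)$ and we only need $\int_{-\infty}^\infty \langle\pi(a(e^{-t}))\phi_\pi,\phi_\pi\rangle\,dt$.

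**Key steps.** First, I would express the spherical matrix coefficient (case (i)) explicitly: for a principal (or complementary) series representation with parameter $s=ir$, the normalized spherical function is the classical Legendre/Harish-Chandra function $\varphi_s(a(y))$, whose Mellin transform against $\int_0^\infty$ is a ratio of Gamma factors. Concretely, the integral $\int_{-\infty}^\infty \varphi_{ir}(a(e^{-t}))\,dt$ equals (a standard rank-one spherical Plancherel computation, e.g. via the integral representation $\varphi_s(a(y))=\frac{1}{2\pi}\int_0^{2\pi}(\text{Iwasawa $y$-coordinate})^{\,(1+s)/2}d\theta$ and then integrating in $t$ first) precisely $\frac{|\Gamma(\frac14+ir)|^4}{\pi|\Gamma(\frac12+2ir)|^2}$ times a normalization constant; the extra factor of $1/2$ in \eqref{determine V^sym spherical} comes from the symmetrization (the $\ors$-average contributes, since $\ors\phi_0=\epsilon\phi_0$ with $\epsilon=+1$ in the even case, so $(\phi_0^{sym})=\phi_0$ but the $\trs$-average of the matrix coefficient folds $t\mapsto -t$, already accounted for, while the normalization of $V^{sym}$ vs $V$ introduces the $\frac12$). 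I would pin down this constant by carefully tracking the Haar measure normalization on $G=NAK$ used throughout the paper and the convention $\langle\phi_0,\phi_0\rangle=1$, then restore the $\langle\phi_0,\phi_0\rangle$ factor for forms not of unit length.

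For the discrete series (case (ii), weight $m>0$, $m\equiv 0\bmod 4$), I would use the explicit model of $\hilb^{+m}$ as holomorphic forms, where the lowest weight vector $\phi_m$ corresponds (via $F_f(z,\zeta)=\zeta^{m/2}f(z)$ with $f$ a weight-$m$ holomorphic form, suitably normalized) to a function on $G$ whose matrix coefficient $\langle\pi(a(e^{-t}))\phi_m,\phi_m\rangle$ is the elementary function $(\cosh(t/2))^{-m}$ — this is the standard discrete-series matrix coefficient of the lowest $K$-type with itself. Then
\begin{equation*}
V^{sym}(\phi_m,\phi_m)=\tfrac12\int_{-\infty}^\infty \frac{dt}{(\cosh(t/2))^{m}}=\tfrac12\cdot 2\int_{-\infty}^\infty\frac{du}{(\cosh u)^{m}}=\tfrac12\cdot 2^{m}\,\EB\!\left(\tfrac m2,\tfrac m2\right),
\end{equation*}
using the classical evaluation $\int_{-\infty}^\infty (\cosh u)^{-m}\,du = 2^{m-1}B(\tfrac m2,\tfrac m2)$ (equivalently $\int_0^\infty(\cosh u)^{-m}du=2^{m-2}B(m/2,m/2)$, combined with the $dt=2\,du$ from $u=t/2$). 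Again the outer $\frac12$ is the symmetrization normalization; here $\phi_m$ and $\phi_{-m}$ are interchanged by $\ors$, and in the $\UU_\pi=\hilb^{+m}\oplus\hilb^{-m}$ picture the $\ors$- and $\trs$-averages collapse to give exactly this factor.

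**Main obstacle.** The genuine difficulty is not the special-function integrals — those are classical — but getting every normalization constant consistent: the Haar measure on $G$, the identification of $\phi_\pi$ with a concrete automorphic function (which forces a choice of how $\langle\phi_0,\phi_0\rangle$ or $\|\phi_m\|$ relates to the Petersson/Maass inner product on $M$), the factor of $2$ from $\Phi^t=a(e^{-t})$ versus $a(e^{-t/2})$ conventions, and the precise meaning of $V^{sym}$ versus $V^{even}$ versus $V$. I would therefore devote the bulk of the proof to a bookkeeping lemma fixing these, then quote the two matrix-coefficient formulas (spherical $=\varphi_{ir}$, discrete $=\cosh^{-m}$) and the two Gamma-function/beta-function integral evaluations, and simply read off \eqref{determine V^sym spherical} and \eqref{determine V^sym discrete}.
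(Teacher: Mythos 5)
Your treatment of the discrete series case is essentially the paper's own proof: reduce $V^{sym}(\phi_m,\phi_m)$ to $\tfrac12 V(\phi_{\pm m},\phi_{\pm m})$ using that $\ors$ interchanges $\hilb^{+m}$ and $\hilb^{-m}$ (so the cross terms in the fourfold average vanish by orthogonality of the irreducibles and $V(\ors F_1,\ors F_2)=V(F_1,F_2)$), identify the matrix coefficient $\langle\pi(a_t)\phi_{\pm m},\phi_{\pm m}\rangle=(\cosh\frac t2)^{-m}$, and integrate to get $2^m\EB(\frac m2,\frac m2)$. The only difference is that the paper derives the $\cosh^{-m}$ formula by solving the ODE in $KAK$ coordinates forced by $d\pi(E^\pm)\phi_{\mp m}=0$, whereas you quote it as a standard fact; either is fine, and your evaluation of the integral is correct.

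The spherical case is where your write-up has a genuine problem. The paper's argument is short: since $A$- and $\trs$-invariance force $\ors$-invariance on spherical $\UU$ and $\trs\phi_0=\phi_0$, one has $V^{sym}(\phi_0,\phi_0)=V(\phi_0,\phi_0)$ \emph{exactly}, and the value of $V(\phi_0,\phi_0)$ is then quoted from Luo--Sarnak. You correctly observe $\phi_0^{sym}=\phi_0$, but then contradict yourself by asserting that ``the extra factor of $1/2$ in \eqref{determine V^sym spherical} comes from the symmetrization'' and that ``the normalization of $V^{sym}$ vs $V$ introduces the $\frac12$.'' It does not: there is no symmetrization factor in this case, and the $\frac1{2\pi}$ is intrinsic to the value of $\int_{-\infty}^\infty\varphi_{ir}(a_t)\,dt$ itself. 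Moreover you never actually evaluate that spherical-function integral --- you leave it as the claimed Gamma-quotient ``times a normalization constant'' to be fixed by later bookkeeping. Since the entire content of part (i) is precisely that constant, the proposal as written does not establish \eqref{determine V^sym spherical}; you would need either to carry out the Harish-Chandra/Mehler--Fock computation of $\int_{\R}\varphi_{ir}(a_t)\,dt$ in the paper's normalization, or to do as the paper does and invoke the computation of $V(\phi_0,\phi_0)$ from the quantum-variance literature.
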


\begin{proof}

In the spherical case we need to compute $V^{sym}(\phi_0 ,\phi_0)$. 
For spherical representations, we saw that $A$-invariance and $\trs$-invariance automatically 
imply $\ors$-invariance, hence on such spherical $\UU$, 
$$V^{sym}|_{\UU\times \UU}=V^{even}|_{\UU\times \UU}\;.
$$
Moreover, since $\phi_0$ is spherical, $\trs \phi_0=\phi_0$ and hence $\phi_0^{even}=\phi_0$. 
Thus $V^{sym}(\phi_0,\phi_0) = V(\phi_0,\phi_0)$, which was computed in \cite{LS} to give
\begin{equation*}
 V^{sym}(\phi_0,\phi_0) = V(\phi_0,\phi_0) =
\frac{|\Gamma(\frac 14+ir)|^4}{2\pi |\Gamma(\frac 12+2ir)|^2} \langle \phi_0,\phi_0 \rangle \;.
\end{equation*}

For $\pi$ discrete series of  weight $m_0\equiv 0\mod 4$ ($m>0$) 
then $A$-invariance implies $\trs$-invariance, hence 
$$V^{even}|_{\UU\times \UU}=V|_{\UU\times \UU}$$
and so  
$$V^{sym}(\phi_m,\phi_m) = 
\frac 14\left(V(\phi_m,\phi_m) + V(\phi_m,\ors \phi_m) + V(\ors \phi_m,\phi_m) + 
V(\ors\phi_m,\ors\phi_m) \right)
$$
By its definition, $V$ respects the orthogonal decomposition into irreducibles; since $\phi_m\in \pi_m$ 
and $\ors \phi_m = \phi_{-m}\in \pi_{-m}$ lie in distinct irreducibles, we get 
$V(\phi_m,\ors \phi_m) = 0 =  V(\ors \phi_m,\phi_m)$. Moreover we have 
$$ V(\ors F_1, \ors F_2) = V(F_1,F_2)$$ 
for any $F_1$, $F_2$. To see this, note first that $\ors$ is induced by the measure preserving map 
$x\mapsto \delta x\delta^{-1}$ of $SX = \Gamma\backslash G$ and hence  
$$ 
\langle \ors F_1,\ors F_2 \rangle = \int_{ \Gamma\backslash G} \ors F_1(x) \overline{\ors F_2(x)} dx = 
\langle F_1, F_2 \rangle
$$
Moreover, $\ors$ commutes with the geodesic flow and so 
\begin{equation*}
 \begin{split}
V(\ors F_1, \ors F_2) &= \intinf \langle \pi\begin{pmatrix}e^{t/2}&0\\0&e^{-t/2}\end{pmatrix} \ors F_1, \ors F_2
\rangle dt \\ 
& = \intinf \langle \ors \pi \begin{pmatrix}e^{t/2}&0\\0&e^{-t/2}\end{pmatrix}  F_1 , \ors F_2
\rangle dt \\ 
&= \intinf \langle \pi \begin{pmatrix}e^{t/2}&0\\0&e^{-t/2}\end{pmatrix}  F_1 ,   F_2 \rangle dt \\
&=V(F_1, F_2) \;.
 \end{split}
\end{equation*}
Thus we find 
$$ V^{sym}(\phi_m,\phi_m) = \frac 12 V(\phi_{-m},\phi_{-m})\;.
$$

 Let
\begin{equation*}
f(x) = \langle \pi(x) \phi_{-m},\phi_{-m} \rangle ,\qquad x\in G \;.
\end{equation*}
Then applying the raising operator $E^+$ via the regular
representation gives an operator $\mathcal L_{E^+}$ satisfying 
\begin{equation}\label{annihilated by LEplus}
\mathcal L_{E^+}f(x) = 
\langle \pi(x) (d\pi(E^+)\phi_{-m}), \phi_{-m} \rangle =0
\end{equation}
Also by the unitarity of $\pi$,
\begin{equation}\label{transformation of f}
\begin{split}
f(k(\theta_1) x k(\theta_2)) &= \langle \pi(k(\theta_1) x k(\theta_2))
\phi_{-m},\phi_{-m} \rangle  \\
&= e^{-im(\theta_1+\theta_2)} \langle \pi(x ) \phi_{-m},\phi_{-m}
\rangle = e^{-im(\theta_1+\theta_2)} f(x)
\end{split}
\end{equation}

Using the coordinates
$k(\theta_1) \begin{pmatrix}e^{r/2}&0\\0&e^{-r/2} \end{pmatrix}
k(\theta_2)$
on $G$ and the formula for $\mathcal L^+$ in these coordinates,  
we deduce from \eqref{annihilated by LEplus}  and
\eqref{transformation of f}  that
\begin{equation*}
f( k(\theta_1) \begin{pmatrix}e^{r/2}&0\\0&e^{-r/2} \end{pmatrix}
k(\theta_2)) = e^{-im(\theta_1+\theta_2)} g(r)
\end{equation*}
where $g$ satisfies the ODE
\begin{equation}\label{ODE}
2\frac{dg}{dr}  =- \frac{\cosh r}{\sinh r} m g + \frac{ m}{\sinh r} g
\end{equation}
and since we have normalized $\langle \phi_{-m},\phi_{-m}
\rangle =1$, we have
\begin{equation*}
g(0)=1 \;.
\end{equation*}
We integrate \eqref{ODE} and find that
\begin{equation*}
g(r) = (\cosh \frac r2)^{-m}
\end{equation*}
Hence
\begin{equation*} 
V( \phi_{-m},\phi_{-m}) = \intinf (\cosh \frac r2)^{-m} dr
=2^{m}\EB(\frac{m}2,\frac{m}2)
\end{equation*}
where $\EB$ is Euler's beta function. 
Thus we find 
\begin{equation*}
 V^{sym}(\phi_m,\phi_m) =\frac 12 2^{m}\EB(\frac{m}2,\frac{m}2)
\end{equation*}
\end{proof}

\section{Half-integral weight forms}\label{sec:half integral weight} 

\subsection{Basic properties}


Let $\Gamma$ be a discrete subgroup of $\SL_2(\R)$ of finite co-volume. 
Given a character $\chi:\Gamma \to S^1$, an automorphic function
of weight $k$ and character $\chi$ for $\Gamma$ is a function
$f:\HH \to \C$ satisfying
$$f(\gamma z) = \chi(\gamma) \left(\frac{cz+d}{|cz+d|}\right)^k f(z), 
\quad \forall \gamma\in \Gamma$$
with suitable growth conditions at the cusps of $\Gamma$. 
It is cuspidal if it vanishes at the cusps. 

The Laplacian of weight $k$ is defined as
$$
\Delta_k = y^2 \left( \frac{\partial^2}{\partial x^2} +
\frac{\partial^2}{\partial y^2} \right)
-iky \frac{\partial}{\partial x}
$$
The Laplacian $\Delta_k$ 
 maps forms of weight $k$ to themselves, and maps cusp forms to themselves.
A Maass cusp form of weight $k$ is a cuspidal automorphic function of
 weight $k$ (for some character $\chi$) which is an eigenfunction of
 $\Delta_k$.


Let $W_{\kappa,\mu}$ be the standard Whittaker
function, normalized so that at infinity
\begin{equation}\label{W at infinity} 
W_{\kappa,\mu}(y) \sim y^{\kappa}e^{-y/2}, \quad y\to\infty  
\end{equation}
The asymptotic behaviour of $W_{\kappa,\mu}(y)$ near $y=0$ is 
\begin{equation}\label{W at 0 a}
W_{\kappa,\mu}(y) \sim 
\frac{\Gamma(-2\mu)}{\Gamma(\frac 12 -\mu-\kappa)}y^{\frac 12+\mu} + 
\frac{\Gamma(2\mu)}{\Gamma(\frac 12 +\mu-\kappa)} y^{\frac 12-\mu}, 
\quad y\to 0
\end{equation}
for $\mu\neq 0$, while 
\begin{equation}\label{W at 0 b}
W_{\kappa,0} \ll y^{1/2} \log y, \quad y \to 0 
\end{equation}
The  functions
$$f_k^{\pm}(z,s):= W_{\pm \frac k2,s-\frac 12}(4\pi y)e(\pm x)$$
are eigenfunctions of $\Delta_k$ with eigenvalue $\lambda=s(1-s)$.

 A Maass cusp form $F$ of weight
$k$ and eigenvalue $\lambda=s(1-s)$ has Fourier expansion
$$ F(z) = \sum_{n\neq 0} \rho(n) f_k^{\sign(n)}(|n|z,s) =\sum_{n\neq 0}
\rho(n) W_{\frac {\sign(n)k}2,s-\frac 12}(4\pi |n|y)e(nx)
$$

The Petersson inner product is defined for a pair of (cuspidal) functions of the
same weight $k$ and character $\chi$, as
$$ \langle f,g \rangle =\int_{\Gamma\backslash \HH} f(z)
\overline{g(z)} \frac{dxdy}{y^2}
$$

\subsection{Maass operators} 

For any real  $k$, define the raising operator
$$
\maasK_k = \frac k2 + y\left( i\frac{\partial}{\partial x}+
\frac{\partial}{\partial y} \right) = \frac k2 +(z-\bar  z)
\frac{\partial}{\partial z}
$$
and the lowering operator
$$
\maasL_k = \frac k2 + y\left( i\frac{\partial}{\partial x} -
\frac{\partial}{\partial y} \right) = \frac k2 +(z-\bar  z)
\frac{\partial}{\partial \bar z}
$$

The raising operator  $\maasK_k$ takes Maass forms of weight $k$ to forms
of weight $k+2$ and the lowering operator $\maasL_k$ takes Maass forms
of weight $k$ to forms of weight $k-2$.

Then 
$$
\maasK_k\Delta_k = \Delta_{k+2}\maasK_k,\qquad \maasL_k\Delta_k =
\Delta_{k-2}\maasL_k
$$

The effect of the Maass operators on Petersson inner products is given
as: If $f,g$ have weight $k$ and character $\chi$, then
$$
\langle \maasK_k f,\maasK_kg \rangle = \left(\lambda(s)-\lambda(-\frac k2)
\right) \langle f,g \rangle
$$
and
$$
\langle \maasL_k f,\maasL_kg \rangle = \left(\lambda(s)-\lambda(\frac k2)
\right) \langle f,g \rangle
$$

The action of the Maass operators on the eigenfunctions $f_k^{\pm}(z,s)$ is
\begin{equation}\label{raising f^pm}
\maasK_k f_k^+(z,s) = -f_{k+2}^+(z,s),\quad
\maasK_k f_k^-(z,s) = (s+\frac k2)(1-s+\frac k2) f_{k+2}^-(z,s)
\end{equation}
and
\begin{equation}
\label{lowering f^pm}
\maasL_k f_k^+(z,s) = -(s-\frac k2)(1-s-\frac k2) f_{k-2}^+(z,s),\quad
\maasL_k f^-(z,s) = f_{k-2}^-(z,s)
\end{equation}

\subsection{Maass operators and Fourier expansions}
We want to see the Fourier expansion of a ``raised'' Maass form in
terms of its original.

So start with a Maass form $F$ of weight $1/2$ and eigenvalue
$\lambda=1/4+r^2$ 
with Fourier expansion
$$
F(z) = \sum_{n\neq 0} \rho(n) W_{\frac {\sign(n)}4,ir}(4\pi |n|y)e(nx)
$$

Applying the Maass raising operator  $\maasK_{1/2}$, we get a form
$\maasK_{1/2}F$ of weight $5/2$ whose Fourier expansion is obtained by 
\eqref{raising f^pm} as
\begin{multline}
\maasK_{1/2}F(z) = \sum_{n=1}^\infty -\rho(n) W_{5/4,ir}(4\pi ny)e(nx) \\
+\sum_{n=1}^\infty \left((\frac 34)^2+r^2 \right)
\rho(-n)W_{-5/4,ir}(4\pi ny)e(-nx)
\end{multline}
Applying the lowering operator $\maasL_{1/2}$ we get a form
$\maasL_{1/2}F$ of weight  $-3/2$ with Fourier expansion is obtained by 
\eqref{lowering f^pm} as
\begin{multline}
\maasL_{1/2}F(z) = \sum_{n=1}^\infty -\left( (\frac 14)^2+r^2 \right)
\rho(n) W_{-3/4,ir}(4\pi ny)e(nx) \\
+\sum_{n=1}^\infty \rho(-n)W_{3/4,ir}(4\pi ny)e(-nx)
\end{multline}

\section{Rankin-Selberg theory}\label{sec:Rankin Selberg} 
\subsection{Classical Rankin-Selberg theory} 
We recall classical Rankin-Selberg theory as applied to a 
holomorphic form $F$  of weight $k+1/2$ with Fourier expansion 
$$F(z) = \sum_{d\geq 1}c_F(d) e(dz) \;.
$$ 
Let $E(z,s)$ be the standard Eisenstein series for $\Gamma_0(4)$:
$$
E(z,s) = \sum_{\gamma\in \Gamma_\infty \backslash \Gamma_0(4)} \Im(\gamma z)^s
$$
where
$$
\Gamma_\infty = \{ \pm\begin{pmatrix}1&n\\0&1\end{pmatrix}:n\in \Z\}
$$
The series is absolutely convergent for $\Re(s)>1$, with an analytic
continuation  to $\Re(s)>1/2$ except for  a simple pole at $s=1$,
where the residue is
$$
\mbox{Res}_{s=1}E(z,s) =  \frac{1}{\Vol(\Gamma_0(4)\backslash \HH)}
= \frac 1{2\pi}
$$
One starts with the integral 
$$ I(s)=
\int_{\Gamma_0(4)\backslash \HH} |F(z)|^2 E(z,s) y^{k+\frac 12} \frac{dxdy}{y^2}
$$ 
which is analytic in $\Re(s)>1/2$ except for 
a simple pole at $s=1$ with residue 
$$R(F)=\frac{\langle F,F\rangle}{2\pi} 
$$ 
By the ``unfolding trick'', we have 
$$
I(s)=(4\pi)^{-(s+k-\frac 12)} \Gamma(s+k-\frac 12) 
\sum_{n=1}^\infty|\frac{c_F(n)}{n^{\frac{k-1/2}2}}|^2 n^{-s} 
$$
and hence the Dirichlet series 
$$ D(s)=\sum_{n=1}^\infty|\frac{c_F(n)}{n^{\frac{k-1/2}2}}|^2 n^{-s} $$
has a simple pole at $s=1$ with residue 
$$ \frac{(4\pi)^{k+\frac 12} }{\Gamma(k+\frac 12)} 
\frac{\langle F,F\rangle}{2\pi}  
$$
Consequently we find 
\begin{equation}\label{standard RS} 
  \lim_{N\to \infty} \frac 1N\sum_{n\leq N} |\frac{c_F(n)}{n^{\frac{k-1/2}2}}|^2
 =  \frac{(4\pi)^{k+\frac 12} }{\Gamma(k+\frac 12)} \frac{\langle F,F\rangle}{2\pi}
\end{equation}

Similar considerations show that if we take forms $F$ of weight $k+1/2$ and $G$ of weight $\ell +1/2$ 
($k$ and $\ell$ possibly different) which are orthogonal then we have 
\begin{equation}\label{standard RS orthogonal} 
  \lim_{N\to \infty} \frac 1N\sum_{n\leq N} \frac{c_F(n)}{n^{\frac{k-1/2}2}} 
\frac{\overline{c_G(n)}}{n^{\frac{\ell-1/2}2}} 
 =  0
\end{equation}
and that if $F$ is a Maass form f weight$1/2$ for $\Gamma_0(4)$ with Fourier expansion 
$$
F(x+iy) = \sum_{n\neq 0} \rho(n) W_{\sign(n)/4, ir}(4\pi |n|y) e(nx) 
$$
and $G$ is a holomorphic form of weight $k+1/2$ then 
\begin{equation}\label{standard RS orthogonal maass holom} 
 \lim_{N\to \infty} \frac 1N \sum_{n\leq N} \sqrt{n} \rho(n) 
\frac{\overline{c_G(n)} }{ n^{\frac{k-1/2}2}}  
 =  0
\end{equation}

These arguments will also give the asymptotics of the sum of squares 
$$ \sum_{-N\leq n \leq N} |4\pi n\rho(n)|^2
$$
of Fourier coefficients with both positive and negative indices. 
However for our application we need to be able to separately sum only
coefficients indexed by positive integers, that is we require the
asymptotics of the series  
$$ \sum_{n=1}^N |4\pi n\rho(n)|^2  $$
To do so, we make use of the arguments in the paper by Matthes
\cite{Matthes1}, which we adapt for our case, 
see also \cite{Maass1964, Muller1992}. 

\subsection{One-sided Rankin-Selberg theory}

Let $F(z)$  and $F'(z)$ be Maass cusp forms of weight $1/2$ for
$\Gamma_0(4)$, and Laplace eigenvalues $\lambda=\frac 14+r^2$, $\lambda'=\frac
14+(r')^2$, with
Fourier expansion
$$F(x+iy) = \sum_{n\neq 0} \rho(n) W_{\sign(n)/4, ir}(4\pi |n|y) e(nx) $$
$$F'(x+iy) = \sum_{n\neq 0} \rho'(n) W_{\sign(n)/4, ir'}(4\pi |n|y) e(nx) $$

We define two Dirichlet series
\begin{equation*}
\begin{split}
L_+(s,F\times F') &=
\sum_{n=1}^\infty \frac{4\pi n\rho(n)\overline{\rho'(n)}}{n^{s}} \\
L_-(s,F\times F') &=
\sum_{n=1}^\infty \frac{4\pi n \rho(-n)\overline{\rho'(-n)}}{ n^{s}}  
\end{split}
\end{equation*}
\begin{proposition}\label{prop:Matthes} 
Both $L_\pm(s)$ have analytic continuation to
$\Re(s)> \frac 12$,  except for a simple pole at $s=1$ if $F$ and $F'$ are
not orthogonal.
\end{proposition}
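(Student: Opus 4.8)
The obstacle is that the naive integral $\int_{\Gamma_0(4)\backslash\HH}F(z)\overline{F'(z)}E(z,s)y^{1/2}\,d\mu$ unfolds to a sum over \emph{all} nonzero $n$, mixing the $L_+$ and $L_-$ series, because the Whittaker function $W_{\kappa,\mu}$ appears for both signs of the index. Following Matthes \cite{Matthes1}, the remedy is to introduce an auxiliary kernel that isolates one sign. Concretely, one replaces the plain Eisenstein series by an incomplete Poincar\'e-type series, or equivalently inserts a test function $h(y)$ and considers
\begin{equation*}
I(s) = \int_{\Gamma_0(4)\backslash\HH} F(z)\overline{F'(z)}\, E(z,s)\, h(y)\, \frac{dx\,dy}{y^2},
\end{equation*}
then unfolds against the $\Gamma_\infty$-expansion of $E$ to reach $\int_0^\infty\int_0^1$; the $x$-integral over $[0,1]$ kills all cross terms and leaves $\sum_{n\neq 0}\rho(n)\overline{\rho'(n)}\int_0^\infty W_{\sign(n)/4,ir}(4\pi|n|y)\,\overline{W_{\sign(n)/4,ir'}(4\pi|n|y)}\,y^{s}h(y)\,\frac{dy}{y^2}$. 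The point of Matthes's argument is that a judicious choice of $h$ (or of the Poincar\'e series weight) makes the $y$-integral for $n<0$ \emph{vanish} while for $n>0$ it produces a nonzero Mellin-type factor times $n^{-s}$, thereby extracting exactly $L_+(s,F\times F')$ (and symmetrically $L_-$).

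\textbf{The steps I would carry out are as follows.} First, recall from \eqref{W at infinity}, \eqref{W at 0 a}, \eqref{W at 0 b} the decay of $W_{\kappa,\mu}$ at $0$ and $\infty$, so that the integrals $\int_0^\infty W_{\kappa,ir}(4\pi n y)\overline{W_{\kappa,ir'}(4\pi ny)}y^{s-2}\,dy$ converge for $\Re(s)$ large and extend meromorphically; the exponential decay at $\infty$ gives convergence there, and the asymptotics $W_{\kappa,\mu}(y)\sim y^{1/2\pm\mu}$ at $0$ give convergence for $\Re(s)>1/2$ after accounting for the $y^{-2}$. Second, set up the integral of $F\overline{F'}E(z,s)y^{1/2}$ against the appropriate weight-$1/2$-compatible kernel; since $F$, $F'$ both have weight $1/2$, the product $F\overline{F'}$ is $\Gamma_0(4)$-invariant and the standard unfolding against $E(z,s)$ applies. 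Third, perform the $x$-integration to collapse to the diagonal $n=n'$, obtaining a single Dirichlet series in $n>0$ together with the one in $n<0$; separating them is where Matthes's device enters. Fourth, locate the poles: $E(z,s)$ has its only pole at $s=1$ with residue $\frac1{2\pi}$ (stated in the excerpt), and the residue of $I(s)$ there is a nonzero multiple of $\langle F,F'\rangle$; hence $L_\pm(s)$ inherits a simple pole at $s=1$ precisely when $\langle F,F'\rangle\neq0$, and is otherwise holomorphic on $\Re(s)>1/2$ because the only other possible singularities come from the meromorphic continuation of the archimedean Mellin integral, which is holomorphic there by the second step. Fifth, check that no spurious pole is introduced by the $h$ or Poincar\'e modification --- this is a bookkeeping matter once the Mellin transform of the relevant product of Whittaker functions is identified with a ratio of Gamma factors (a Barnes-type integral evaluation).

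\textbf{The main obstacle} is step three: producing the clean separation of $L_+$ from $L_-$. The technical heart is the evaluation (or at least the meromorphic control) of the archimedean integral $\int_0^\infty W_{a,ir}(y)\,\overline{W_{a,ir'}(y)}\,y^{s-2}\,dy$ for $a=\pm\tfrac14$, and arranging that the auxiliary kernel annihilates the $a=-\tfrac14$ contribution without disturbing the pole structure of the $a=+\tfrac14$ part. Once Matthes's construction is in place and the Whittaker Mellin transform is recognized as a product of $\Gamma$-factors (so manifestly holomorphic and nonvanishing on $\Re(s)>\tfrac12$), the pole at $s=1$ from $E(z,s)$ transfers directly to $L_\pm$, with residue proportional to $\langle F,F'\rangle$, and the proposition follows. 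I would present the adaptation of \cite{Matthes1} in enough detail to make the sign-separation explicit, then cite the Mellin--Barnes evaluation for the Gamma-factor identity and quote the analytic continuation of $E(z,s)$ to $\Re(s)>\tfrac12$ for the rest.
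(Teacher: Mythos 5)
You have correctly located the crux --- separating $L_+$ from $L_-$ after unfolding --- but the device you propose for it does not work, and this is a genuine gap rather than a bookkeeping issue. A single auxiliary kernel $h(y)$ (or a single Poincar\'e-type modification) cannot annihilate the $n<0$ contribution: after unfolding, the $n$-th negative-index term carries the factor $\int_0^\infty W_{-1/4,ir}(4\pi n y)\overline{W_{-1/4,ir'}(4\pi n y)}\,h(y)\,y^{s-2}\,dy$, and for this to vanish for every $n\geq 1$ and every $s$ while the corresponding $+1/4$ integrals survive, $h$ would essentially have to be a power of $y$ with $\mathcal M_{-1/4}\equiv 0$, which is false. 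What Matthes actually does, and what the paper does, is form a \emph{second, linearly independent} Rankin--Selberg integral $\mathcal B(s)$ by pairing the raised and lowered forms $\maasK_{1/2}F$ and $\maasL_{1/2}F$ against the weight $\mp 2$ Eisenstein series $E_{-2}(z,s)$ and $E_{2}(z,s)$ (which are analytic on $\Re(s)>1/2$ with no pole at $s=1$). Both $\mathcal A(s)$ and $\mathcal B(s)$ unfold to linear combinations of $L_+$ and $L_-$ with coefficients $\mathcal M_{\pm 1/4}(s)$ and $\mathcal M_{\pm 1/4}(s+1)$, and one solves the resulting $2\times 2$ system.

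This shifts the real burden to a point your proposal treats too lightly: inverting the system could introduce spurious poles wherever the determinant of the coefficient matrix $M(s)$ vanishes, and $\mathcal M_k(s)$ is \emph{not} exactly a ratio of Gamma factors (only asymptotically so, which is the content of Lemma~\ref{asymp of Mk}). The paper's proof therefore needs the separate Lemma~\ref{Nonsingularity of M(s)}: a recurrence relation for $\mathcal M_k(s)$ yields a functional equation for $\det M(s)$, which combined with the asymptotics gives the closed form $\det M(s)=-2\,\Gamma(s+\mu+\nu)\Gamma(s+\mu-\nu)\Gamma(s-\mu+\nu)\Gamma(s-\mu-\nu)/\bigl(s\,\Gamma^2(s)\bigr)$, manifestly nonvanishing for $\Re(s)>0$. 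Without this step the proposition is not proved. Your step four (the pole at $s=1$ coming only from $E(z,s)$ with residue proportional to $\langle F,F'\rangle$) and your convergence analysis of the archimedean integrals are correct and match the paper.
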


We next compute the residue at $s=1$ when $F'=F$: 
\begin{proposition}\label{prop:Rankin-Selberg}
 The residue at $s=1$ of $L^+(s,F\times F)$ is
\begin{equation}\label{form2 for R+}
R^+:=\mbox{Res}_{s=1} L_+(s,F\times F)=
\frac{|\Gamma(\frac 14+ir)|^2} {|\Gamma(\frac
12+2ir)|^2} \frac{\langle F,F \rangle}{\pi}
\end{equation}
and the residue of $L_{-}(s,F\times F)$ is 
$$
R^{-} = \frac{|\Gamma(\frac 34+ir)|^2} {|\Gamma(\frac
12+2ir)|^2} \frac{\langle F,F \rangle}{\pi}
$$
\end{proposition}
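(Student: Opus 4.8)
The plan is to run classical Rankin--Selberg for the weight-zero, $\Gamma_0(4)$-invariant function $|F(z)|^2$ (the automorphy factor of a weight $\tfrac12$ form has modulus $1$, so $|F|^2$ descends to $\Gamma_0(4)\backslash\HH$), and then to read off the residue from the explicit archimedean Whittaker integrals. Consider
\[
I(s)=\int_{\Gamma_0(4)\backslash\HH}|F(z)|^2\,E(z,s)\,\frac{dx\,dy}{y^2},
\]
which, by the residue $\tfrac1{2\pi}$ of $E(z,s)$ at $s=1$ and the definition of the Petersson norm, has a simple pole at $s=1$ with residue $\langle F,F\rangle/(2\pi)$. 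Unfolding $E$, applying Parseval on $x\in[0,1]$, and substituting $u=4\pi|n|y$ termwise produces the identity of meromorphic functions
\[
(4\pi)^{s}I(s)=\mathcal M_{1/4}(s)\,L_+(s,F\times F)+\mathcal M_{-1/4}(s)\,L_-(s,F\times F),
\]
where $\mathcal M_\kappa(s):=\int_0^\infty W_{\kappa,ir}(u)^2\,u^{s-2}\,du$ and we used that $W_{\kappa,ir}$ is real together with $\sum_{n\ge1}|\rho(\pm n)|^2 n^{1-s}=L_\pm(s,F\times F)/(4\pi)$. Taking residues at $s=1$ gives the single relation $\mathcal M_{1/4}(1)R^++\mathcal M_{-1/4}(1)R^-=2\langle F,F\rangle$.

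This one relation does not by itself separate $R^+$ from $R^-$, and the variants obtained by first applying the Maass operators $\maasK_{1/2}$ or $\maasL_{1/2}$ to $F$ (using the Fourier expansions of \S\ref{sec:half integral weight} and the inner-product identities for $\maasK,\maasL$) all reproduce the very same relation. The separation is exactly the content of Matthes's one-sided Rankin--Selberg method, i.e.\ Proposition~\ref{prop:Matthes}: carrying out the unfolding for the (non-automorphic) positive-index and negative-index pieces of the constant term of $|F|^2$ separately, with the half-integral-weight multiplier in place, yields the meromorphic continuation of each of $L_\pm(s,F\times F)$ with a simple pole at $s=1$, and the construction delivers each residue individually. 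Combined with the identity above this forces the clean split $\mathcal M_{1/4}(1)R^+=\mathcal M_{-1/4}(1)R^-=\langle F,F\rangle$, i.e.\ the positive- and negative-index parts each contribute exactly half the Rankin--Selberg residue $\langle F,F\rangle/(2\pi)$.

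It remains to evaluate the archimedean integrals at $s=1$. The claim is $\mathcal M_\kappa(1)=\int_0^\infty W_{\kappa,ir}(u)^2\,\frac{du}{u}=\tfrac12\,|\Gamma(\tfrac12+\kappa+ir)|^2$, which I would prove by inserting the Mellin--Barnes representation of $W_{\kappa,ir}$ and applying Barnes' first lemma (equivalently, from the tabulated Mellin transform of a product of two Whittaker functions, specialized at $s=1$). As a check, for $\kappa=0$ one has $W_{0,ir}(u)=\sqrt{u/\pi}\,K_{ir}(u/2)$ and the standard evaluation $\int_0^\infty K_{ir}(v)^2\,dv=\tfrac\pi4|\Gamma(\tfrac12+ir)|^2$, so $\mathcal M_0(1)=\tfrac12|\Gamma(\tfrac12+ir)|^2$, as predicted; in particular $\mathcal M_{1/4}(1)=\tfrac12|\Gamma(\tfrac34+ir)|^2$ and $\mathcal M_{-1/4}(1)=\tfrac12|\Gamma(\tfrac14+ir)|^2$.

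Putting these into $\mathcal M_{1/4}(1)R^+=\mathcal M_{-1/4}(1)R^-=\langle F,F\rangle$ gives $R^+=2\langle F,F\rangle/|\Gamma(\tfrac34+ir)|^2$ and $R^-=2\langle F,F\rangle/|\Gamma(\tfrac14+ir)|^2$, and Legendre's duplication formula $\Gamma(\tfrac14+ir)\Gamma(\tfrac34+ir)=2^{1/2-2ir}\sqrt{\pi}\,\Gamma(\tfrac12+2ir)$, hence $|\Gamma(\tfrac14+ir)|^2\,|\Gamma(\tfrac34+ir)|^2=2\pi\,|\Gamma(\tfrac12+2ir)|^2$, converts these into
\[
R^+=\frac{|\Gamma(\tfrac14+ir)|^2}{|\Gamma(\tfrac12+2ir)|^2}\,\frac{\langle F,F\rangle}{\pi},\qquad R^-=\frac{|\Gamma(\tfrac34+ir)|^2}{|\Gamma(\tfrac12+2ir)|^2}\,\frac{\langle F,F\rangle}{\pi},
\]
as asserted. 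The main obstacle is the one-sided separation in the second step: ordinary Rankin--Selberg, and all of its Maass-operator descendants, only see the combination $\mathcal M_{1/4}(1)R^++\mathcal M_{-1/4}(1)R^-$, so isolating the two residues genuinely requires Matthes's argument for the independent continuation of $L_\pm$; a secondary technical point is pinning down the normalization of $\mathcal M_\kappa(1)$ exactly.
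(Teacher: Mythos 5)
Your final formulas are correct, but the route you take to them contains two compensating errors, so the argument as written does not establish the proposition. First, the archimedean evaluation $\mathcal M_\kappa(1)=\int_0^\infty W_{\kappa,ir}(u)^2\,du/u=\tfrac12|\Gamma(\tfrac12+\kappa+ir)|^2$ is false for $\kappa=\pm\tfrac14$. The exact value (the formula from \cite[p.~858]{GR} used in the paper) is
\[
\int_0^\infty W_{\kappa,ir}(u)^2\,\frac{du}{u}=\frac{\pi}{i\sinh(2\pi r)}\cdot\frac{\psi(\tfrac12+ir-\kappa)-\psi(\tfrac12-ir-\kappa)}{\Gamma(\tfrac12+ir-\kappa)\,\Gamma(\tfrac12-ir-\kappa)}\,,
\]
and the digamma difference collapses to elementary functions only when the two arguments are linked by the reflection formula, i.e.\ only at $\kappa=0$, where $\psi(\tfrac12+ir)-\psi(\tfrac12-ir)=i\pi\tanh(\pi r)$ recovers your $K$-Bessel check. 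For $\kappa=\tfrac14$ the arguments $\tfrac14\pm ir$ sum to $\tfrac12$, not $1$: as $r\to0$ one gets $\mathcal M_{1/4}(1)\to\psi'(\tfrac14)/\Gamma(\tfrac14)^2\approx1.31$, whereas $\tfrac12\Gamma(\tfrac34)^2\approx0.75$. (Relatedly, the Mellin transform of $W_{\kappa,ir}^2$ is a ${}_3F_2$-type object for general $\kappa$, not a product of Gammas, so Barnes' first lemma will not deliver your formula.) Second, and consequently, the ``clean split'' $\mathcal M_{1/4}(1)R^+=\mathcal M_{-1/4}(1)R^-=\langle F,F\rangle$ is also false: with the correct Whittaker integrals and the correct $R^{\pm}$ one finds $\mathcal M_{\pm1/4}(1)R^{\pm}=\tfrac{2A_\pm}{A_++A_-}\langle F,F\rangle$, where $A_+=\psi(\tfrac14+ir)-\psi(\tfrac14-ir)$ and $A_-=\psi(\tfrac34+ir)-\psi(\tfrac34-ir)$, and these are unequal (near $r=0$ they behave like $2ir\psi'(\tfrac14)$ and $2ir\psi'(\tfrac34)$, i.e.\ involve $\pi^2+8G$ versus $\pi^2-8G$ with $G$ Catalan's constant). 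The two errors cancel, which is why you land on the right answer, but neither step can be repaired as stated.

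The actual mechanism for separating $R^+$ from $R^-$ is precisely the one you discard. It is not true that the Maass-operator variants ``reproduce the very same relation'': the paper forms $\mathcal J(s)=\int_{\Gamma_0(4)\backslash\HH}\maasK_{1/2}F\cdot\overline F\,E_{-2}(z,s)\,\frac{dxdy}{y^2}$, and since the weight $\mp2$ Eisenstein series are regular at $s=1$, the residue of the unfolded expression must vanish. Because $\maasK_{1/2}$ multiplies the positive- and negative-index Fourier terms by different scalars, this yields a second, independent, homogeneous relation
$\bigl((\tfrac34)^2+r^2\bigr)\int_0^\infty W_{-5/4,ir}W_{-1/4,ir}\,\tfrac{dy}{y}\,R^-=\int_0^\infty W_{5/4,ir}W_{1/4,ir}\,\tfrac{dy}{y}\,R^+$; the cross integrals $\int W_{\kappa,\mu}W_{\lambda,\mu}\,dy/y$ with $\kappa\neq\lambda$ \emph{do} reduce to Gammas, giving $R^+/|\Gamma(\tfrac14+ir)|^2=R^-/|\Gamma(\tfrac34+ir)|^2$. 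Combining this with the single inhomogeneous relation you derived from $\mathcal I(s)$, and using $\psi(\tfrac14+ir)-\psi(\tfrac14-ir)+\psi(\tfrac34+ir)-\psi(\tfrac34-ir)=2i\sinh(2\pi r)|\Gamma(\tfrac12+2ir)|^2$, gives the stated residues. Proposition~\ref{prop:Matthes} supplies only the meromorphic continuation of $L_\pm$ past $\Re(s)=1$, not the values of the residues, so appealing to it cannot substitute for this second relation.
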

The arguments and ideas needed to establish this have been essentially
provided in \cite{Matthes1}.

As a consequence, we deduce by a standard Tauberian argument that
\begin{corollary}\label{Tauberian cor} 
Let $F$ be as above. Then
\begin{equation}
\sum_{1\leq n\leq N} 4\pi n |\rho(n)|^2 \sim R^+ N, \quad N\to \infty
\end{equation}
while if $F$ and $F'$ are orthogonal then 
\begin{equation}
\sum_{1\leq n\leq N} 4\pi n \rho(n) \overline{\rho'(n)} = o(N)
\end{equation}
\end{corollary}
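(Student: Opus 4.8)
The plan is to deduce Corollary~\ref{Tauberian cor} from Propositions~\ref{prop:Matthes} and \ref{prop:Rankin-Selberg} by the Wiener--Ikehara Tauberian theorem: if $g(s)=\sum_{n\ge 1}a_n n^{-s}$ has nonnegative coefficients, converges for $\Re(s)>1$, and $g(s)-c/(s-1)$ extends holomorphically to a neighbourhood of $\{\Re(s)\ge 1\}$, then $\sum_{n\le N}a_n\sim cN$. First I would take $a_n=4\pi n|\rho(n)|^2\ge 0$, so that $L_+(s,F\times F)=\sum_n a_n n^{-s}$; this converges for $\Re(s)$ large since the $\rho(n)$ grow at most polynomially. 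By Proposition~\ref{prop:Matthes} it continues holomorphically to $\Re(s)>\tfrac12$ apart from a simple pole at $s=1$, with residue $R^+$ by Proposition~\ref{prop:Rankin-Selberg}; hence $L_+(s,F\times F)-R^+/(s-1)$ is holomorphic near $\{\Re(s)\ge 1\}$. Landau's theorem (a Dirichlet series with nonnegative coefficients is singular at the real point of its abscissa of convergence), together with the absence of singularities in $\tfrac12<\Re(s)<1$, forces the abscissa of convergence to be at most $1$, so the series converges for $\Re(s)>1$ and Wiener--Ikehara gives $\sum_{1\le n\le N}4\pi n|\rho(n)|^2\sim R^+N$.

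For the orthogonal case, suppose $\langle F,F'\rangle=0$, so that by Proposition~\ref{prop:Matthes} the series $L_+(s,F\times F')=\sum_n a_n n^{-s}$ with $a_n=4\pi n\rho(n)\overline{\rho'(n)}$ is holomorphic for $\Re(s)>\tfrac12$ with no pole at $s=1$. Cauchy--Schwarz together with the diagonal asymptotics just proved (applied to $F$ and to $F'$) gives $\sum_{n\le N}|a_n|\le\sum_{n\le N}2\pi n(|\rho(n)|^2+|\rho'(n)|^2)=O(N)$, so $L_+(s,F\times F')$ converges absolutely for $\Re(s)>1$. Set $b_n=2\pi n(|\rho(n)|^2+|\rho'(n)|^2)\ge|a_n|$ and $\overline{L_+}(s):=\overline{L_+(\bar s)}$. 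The sequences $b_n\pm\Re(a_n)$ are nonnegative, with Dirichlet series
\begin{equation*}
\tfrac12\bigl(L_+(s,F\times F)+L_+(s,F'\times F')\bigr)\pm\tfrac12\bigl(L_+(s,F\times F')+\overline{L_+}(s,F\times F')\bigr),
\end{equation*}
each holomorphic for $\Re(s)>\tfrac12$ except for a simple pole at $s=1$ of residue $\tfrac12(R_F^++R_{F'}^+)$ (the cross terms being holomorphic at $s=1$), and hence convergent for $\Re(s)>1$ by Landau. Applying Wiener--Ikehara to the two series and subtracting yields $\sum_{n\le N}\Re(a_n)=o(N)$; the same argument with $\Im(a_n)$ in place of $\Re(a_n)$ gives $\sum_{n\le N}\Im(a_n)=o(N)$, whence $\sum_{1\le n\le N}4\pi n\rho(n)\overline{\rho'(n)}=o(N)$.

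I expect no serious obstacle: all the analytic content has already been absorbed into Propositions~\ref{prop:Matthes} and \ref{prop:Rankin-Selberg} (the continuation to $\Re(s)>\tfrac12$ and the residue evaluation, via the one-sided Rankin--Selberg analysis adapted from Matthes~\cite{Matthes1}), and what remains is the routine Tauberian step. The two points that need a little care are verifying that each Dirichlet series genuinely converges in $\Re(s)>1$ (handled uniformly by passing to nonnegative coefficients and quoting Landau) and upgrading the orthogonal bound from $O(N)$ to $o(N)$, which is precisely what the nonnegative majorant $b_n$ together with the real/imaginary decomposition accomplishes; alternatively one could appeal directly to a Tauberian theorem for Dirichlet series with $O(N)$ partial sums that continue holomorphically up to $\Re(s)=1$.
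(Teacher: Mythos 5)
Your proposal is correct and follows essentially the same route as the paper: the diagonal asymptotic comes from Wiener--Ikehara applied to $L_+(s,F\times F)$ via Propositions~\ref{prop:Matthes} and \ref{prop:Rankin-Selberg}, and the off-diagonal $o(N)$ bound comes from polarization into nonnegative-coefficient Dirichlet series (your $b_n\pm\Re(a_n)$ combinations are just a renormalized form of the paper's $4\pi n|\rho(n)+\rho'(n)|^2$ and $4\pi n|\rho(n)+i\rho'(n)|^2$ series). The only addition is your explicit appeal to Landau's theorem to justify convergence in $\Re(s)>1$, which the paper leaves implicit.
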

\begin{proof}
 Applying the Wiener-Ikehara Tauberian theorem (see \cite{Lang-ANT} for
example) 
to the Dirichlet series
\[\sum _{n=1}^{\infty} \frac{4\pi n |\rho(n)|^{2}}{n^{s}},\;\;\;
\sum _{n=1}^{\infty} \frac{4\pi n |\rho '(n)|^{2}}{n^{s}}, \;\;\; \mbox{and}
\;\;\;\sum _{n=1}^{\infty} \frac{4\pi n |\rho(n) + \rho '(n)|^{2}}{n^{s}} \]
respectively, we infer by proposition~\ref{prop:Matthes} that, 
as $N \rightarrow \infty$,
\[\sum _{1\leq n \leq N} 4\pi n |\rho (n)|^{2} \sim R^{+}_{F} N ,\]
\[\sum _{1\leq n \leq N} 4\pi n |\rho '(n)|^{2} \sim R^{+}_{F'} N ,\]
and
\[\sum _{1\leq n \leq N} 4\pi n |\rho(n) + \rho '(n)|^{2} \sim 
(R^{+}_{F} + R^{+}_{F'}) N .\]
Thus,
\[\sum _{1\leq n \leq N} 4\pi n \;\Re (\rho(n) \overline{\rho '(n)}) = o(N) .\]
Similarly applying the Wiener-Ikehara theorem to the Dirichlet series
\[\sum _{n=1}^{\infty} \frac{4\pi n |\rho(n) + i\rho '(n)|^{2}}{n^{s}} , \]
we obtain
\[\sum _{1\leq n \leq N} 4\pi n |\rho(n) + i\rho '(n)|^{2} 
\sim (R^{+}_{F} + R^{+}_{F'}) N ,\]
and consequently,
\[\sum _{1\leq \leq N} 4\pi n \; \Im (\rho(n) \overline{\rho '(n)}) = o(N) .\]
This proves
\[\sum _{1\leq n\leq N} 4\pi n \rho(n) \overline{\rho '(n)} = o(N) .\]
\end{proof}

\subsection{A Mellin transform} 

Fix $\mu=ir$ and $\nu=ir'$ and define 
$$
\mathcal M_k(s):=\int_0^\infty W_{k,\mu}(y)W_{k,\nu}(y) y^{s-2}dy
$$
In view of the asymptotics \eqref{W at infinity}, 
\eqref{W at 0 a}, \eqref{W at 0 b},  
the integral is absolutely convergent for $\Re(s)>0$ and hence
$\mathcal M_k(s)$ is analytic in that region.
The asymptotic behaviour of $\mathcal M_k(s)$ is given by 
\begin{lemma}\label{asymp of Mk}
Assume that $|k|<1/2$. Then as $|s|\to\infty$, 
\begin{equation*}
\mathcal M_k(s) 
= \frac{\Gamma(s-\frac 12 +k)^2}{\Gamma(s)} 
\left( 1+   O_{\mu,\nu}(|s|^{-1/2}) \right) 
\end{equation*}
\end{lemma}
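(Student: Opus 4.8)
The plan is to reduce $\mathcal M_k(s)$ to a one-dimensional Laplace-type integral whose large-parameter behaviour is governed by Watson's lemma, and then to pass to the stated shape by Stirling's formula. The hypothesis $|k|<\tfrac12$ is what legitimizes the classical integral representation
$$
W_{k,\mu}(y)=\frac{e^{-y/2}\,y^{\mu+1/2}}{\Gamma(\tfrac12-k+\mu)}\int_0^\infty e^{-yu}\,u^{\mu-k-1/2}(1+u)^{\mu+k-1/2}\,du \qquad(\mu=ir),
$$
which needs $\Re(\tfrac12-k+\mu)>0$ and an integrable singularity $u^{\mu-k-1/2}$ at $u=0$, both amounting to $k<\tfrac12$; the bound $k>-\tfrac12$ is used via the symmetric ($\mu\mapsto-\mu$) representation and to keep $\Gamma(s-\tfrac12+k)$ away from its poles in the range of interest.

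First I would insert this representation for both $W_{k,\mu}$ and $W_{k,\nu}$, interchange the order of integration (justified for $\Re s$ large by absolute convergence, then extended by analyticity), and carry out the $y$-integral, an elementary Gamma integral producing $\Gamma(s+\mu+\nu)(1+u_1+u_2)^{-(s+\mu+\nu)}$. This gives
$$
\mathcal M_k(s)=\frac{\Gamma(s+\mu+\nu)}{\Gamma(\tfrac12-k+\mu)\Gamma(\tfrac12-k+\nu)}\int_0^\infty\!\!\int_0^\infty \frac{u_1^{\mu-k-1/2}u_2^{\nu-k-1/2}(1+u_1)^{\mu+k-1/2}(1+u_2)^{\nu+k-1/2}}{(1+u_1+u_2)^{s+\mu+\nu}}\,du_1\,du_2 .
$$
Collapsing the two variables by $v=u_1+u_2$ turns the double integral into $\int_0^\infty \Phi(v)(1+v)^{-(s+\mu+\nu)}\,dv$, where $\Phi(v)$ is the inner integral over $u_1\in(0,v)$; the scaling $u_1=vt$ shows $\Phi(v)=v^{\mu+\nu-2k}\bigl(\EB(\tfrac12-k+\mu,\tfrac12-k+\nu)+O(v)\bigr)$ as $v\to0$. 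After $v=e^x-1$ this becomes the Laplace transform $\int_0^\infty G(x)e^{-(s+\mu+\nu)x}\,dx$ with $G(x)=e^x\Phi(e^x-1)\sim \EB(\tfrac12-k+\mu,\tfrac12-k+\nu)\,x^{\mu+\nu-2k}$ at $x=0$ and $G$ bounded (with $e^{-\Re s\,x}$ forcing convergence) at $x=\infty$.

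Watson's lemma then yields
$$
\int_0^\infty G(x)e^{-(s+\mu+\nu)x}\,dx=\EB(\tfrac12-k+\mu,\tfrac12-k+\nu)\,\Gamma(\mu+\nu-2k+1)\,(s+\mu+\nu)^{-(\mu+\nu-2k+1)}\bigl(1+O_{\mu,\nu}(|s|^{-1})\bigr),
$$
and since $\EB(\tfrac12-k+\mu,\tfrac12-k+\nu)\,\Gamma(\mu+\nu-2k+1)=\Gamma(\tfrac12-k+\mu)\Gamma(\tfrac12-k+\nu)$ the prefactor cancels, leaving $\mathcal M_k(s)=\Gamma(s+\mu+\nu)\,(s+\mu+\nu)^{-(\mu+\nu-2k+1)}\bigl(1+O_{\mu,\nu}(|s|^{-1})\bigr)$. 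Two applications of Stirling finish the job: $\Gamma(a)a^{-\alpha}=\Gamma(a-\alpha)(1+O(|a|^{-1}))$ with $a=s+\mu+\nu$, $\alpha=\mu+\nu-2k+1$ gives $\mathcal M_k(s)=\Gamma(s+2k-1)(1+O_{\mu,\nu}(|s|^{-1}))$, and $\Gamma(s+2k-1)=\Gamma(s-\tfrac12+k)^2/\Gamma(s)\cdot(1+O(|s|^{-1}))$ (an asymptotic form of the duplication formula, checked by Stirling) then produces the claim, in fact with $O(|s|^{-1})$ in place of the stated $O(|s|^{-1/2})$. As a sanity check, for $k=0$ one has $W_{0,\mu}(y)=\sqrt{y/\pi}\,K_\mu(y/2)$ and the whole calculation collapses to the classical closed form $\mathcal M_0(s)=\tfrac{2^{2s-3}}{\pi\Gamma(s)}\Gamma(\tfrac{s+\mu+\nu}2)\Gamma(\tfrac{s+\mu-\nu}2)\Gamma(\tfrac{s-\mu+\nu}2)\Gamma(\tfrac{s-\mu-\nu}2)$, whose Stirling expansion is indeed $\Gamma(s-\tfrac12)^2/\Gamma(s)\cdot(1+\cdots)$.

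The one place needing genuine care is the uniformity of the error term in the argument of $s$. Watson's lemma is cleanest for $|\arg(s+\mu+\nu)|\le\tfrac\pi2-\delta$, whereas the intended application wants the estimate on vertical lines $\Re s=\mathrm{const}>0$, where $\arg(s+\mu+\nu)\to\pm\tfrac\pi2$ as $|\Im s|\to\infty$; there the naive picture ``the $v$-integral concentrates near $v=0$'' is replaced by oscillatory cancellation in $\int_0^\infty G(x)e^{-i(\Im s)x}\,dx$, governed by the algebraic singularity $G(x)\asymp x^{\mu+\nu-2k}$ at the origin (and $|k|<\tfrac12$ ensures $\Re(\mu+\nu-2k)=-2k\in(-1,1)$). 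Quantifying this --- e.g.\ by an Erd\'elyi-type version of Watson's lemma valid up to $|\arg|=\tfrac\pi2$, or by truncating the $x$-integral at $x\asymp|\Im s|^{-1}$ and estimating the tail by integration by parts --- is the main technical step; it is also why a cruder route (splitting $\int_0^\infty W_{k,\mu}W_{k,\nu}\,y^{s-2}\,dy$ at $y\asymp|s|^{1/2}$ and using $W_{k,\mu}(y)=y^ke^{-y/2}(1+O_\mu(1/y))$ for large $y$), which gives only $O(|s|^{-1/2})$ and only in a right half-plane, may be the one the authors have in mind for the stated bound.
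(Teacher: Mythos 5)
Your argument is correct in substance but takes a genuinely different route from the paper, which follows Matthes: there one Whittaker factor is written as a Mellin--Barnes contour integral, the other is integrated against it via $\int_0^\infty W_{k,ir'}(y)e^{-y/2}y^{u-1}\,dy=\Gamma(\tfrac12+u+ir')\Gamma(\tfrac12+u-ir')/\Gamma(u-k+1)$, and the resulting single Barnes integral is shifted to $\Re(v)=k-\tfrac12$; the residue at $v=k$ gives the main term $\Gamma(s-\tfrac12+k-ir')\Gamma(s-\tfrac12+k+ir')/\Gamma(s)$ and the shifted integral contributes $O_{\mu,\nu}(\Gamma(s-\tfrac32+2k))$, which is exactly where the stated relative error $|s|^{-1/2}$ comes from and which holds uniformly in the half-plane $\Re(s)>0$, vertical lines included. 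Your Laplace-representation-plus-Watson computation reproduces the same main term (your $\Gamma(s+\mu+\nu)(s+\mu+\nu)^{-(\mu+\nu-2k+1)}$ agrees with the paper's expression up to $1+O_{\mu,\nu}(|s|^{-1})$, and your $k=0$ closed form is consistent with the paper's evaluation of $\det M(s)$ in the next lemma), and in fact yields the sharper relative error $O(|s|^{-1})$ --- but only in a sector $|\arg s|\le\tfrac\pi2-\delta$, as you correctly diagnose; extending to vertical lines requires the Erd\'elyi-type refinement or the truncation-plus-integration-by-parts argument you sketch. For the one place the paper actually invokes this lemma --- the limit of $\det M(s+n)$ as $n\to\infty$ with $s$ fixed, so that $\arg(s+n)\to0$ --- your sector-restricted version already suffices, so your proof supports the application as is; to get the statement literally for all $|s|\to\infty$ with $\Re(s)>0$ you would need to carry out that last technical step, whereas the paper's contour-shift bound delivers it directly (at the cost of the weaker exponent $\tfrac12$).
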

\begin{proof}
This is a direct generalization of  Lemma 4.1 in \cite{Matthes1},
which deals with the case $r' = r$.
As in \cite{Matthes1}, we use the integral representation
$$
W_{k,ir}(y) = \frac 1{2\pi i} \int_L 
\frac{\Gamma(\frac12-v-ir)\Gamma(\frac 12-v+ir) \Gamma(v-k)}
{\Gamma(\frac 12 +ir-k)\Gamma(\frac 12-ir-k)} y^v dv
$$
where the the path of integration runs from $-i\infty$ to $i\infty$
and is chosen so that all poles of $\Gamma(v-k)$ are to the left, and
all poles of $\Gamma(\frac 12-v \pm ir)$ are to the right of $L$; this
is possible since we assume that $|k|<1/2$. 
Inserting this into the formula for ${\mathcal M}_{k}(s)$ gives 
\begin{multline*}
{\mathcal M}_{k}(s) = \\ 
\int_0^\infty W_{k,ir'}(y)  \frac 1{2\pi i} \int_L 
\frac{\Gamma(\frac12-v-ir)\Gamma(\frac 12-v+ir) \Gamma(v-k)}
{\Gamma(\frac 12 +ir-k)\Gamma(\frac 12-ir-k)} y^{v+s-2} dv dy
\end{multline*}
Then one uses the formula 
$$ \int_0^\infty W_{k,ir'}(y)e^{-y/2} y^{u-1}dy = 
\frac{\Gamma(\frac 12+u+ir')\Gamma(\frac 12 +u-ir')}{\Gamma(u-k+1)},
\quad \Re(u)>-1
$$
to find  
\begin{eqnarray*}  {\mathcal M}_{k}(s) & = & \frac 1{2\pi i}\int_{L}
\Gamma (-\frac 12 + v + s + ir') \Gamma (-\frac 12 + v + s - ir') \\
& & \times \frac{\Gamma (\frac 12 - v - ir) \Gamma (\frac 12 - v + ir) 
\Gamma (v-k)}{\Gamma( v + s -k) \Gamma (\frac 12 + ir -k) \Gamma
(\frac 12 - ir -k)} dv .
\end{eqnarray*}
One then shifts the contour of integration to the line $\Re(v)=k-1/2$,
picking up a single residue at $v=k$, and estimates the remaining
integral  as in \cite{Matthes1} giving 
$$
\mathcal M_k(s) 
= \frac{\Gamma(s-\frac 12 +k-ir')\Gamma(s-\frac 12 +k+ir')}{\Gamma(s)} 
 + O_{\mu,\nu}\left(\Gamma(s-\frac 32 +2k) \right) 
$$
The conclusion of the Lemma  now follows from Stirling's formula. 
\end{proof}

Let 
\begin{equation}\label{a determinant} 
M(s) = \left[ \begin{array}{cc}  {\mathcal M}_{-k}(s+1) &  {\mathcal M}_{-k}(s)
 \\ {\mathcal M}_{k}(s+1) & -{\mathcal M}_{k}(s)   \end{array} \right]
 \; . 
\end{equation}
\begin{lemma}\label{Nonsingularity of M(s)} 
$M(s)$ is analytic and nonsingular for $\Re(s)>0$. 
\end{lemma}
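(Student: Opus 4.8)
The matrix $M(s)$ has entries built from the Mellin transforms ${\mathcal M}_{\pm k}(s)$ and ${\mathcal M}_{\pm k}(s+1)$, which by Lemma~\ref{asymp of Mk} are analytic for $\Re(s)>-1$ (since we then need $\Re(s+1)>0$ and $\Re(s)>0$; strictly ${\mathcal M}_{\pm k}(s)$ is analytic for $\Re(s)>0$, so the natural domain is $\Re(s)>0$). Hence analyticity of each entry on $\Re(s)>0$ is immediate, and the only real content is the \emph{non-vanishing of $\det M(s)$} on that half-plane. The plan is to compute $\det M(s)$ explicitly, identify it in terms of gamma factors, and check it has no zeros in $\Re(s)>0$.

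First I would expand the determinant:
\[
\det M(s) = -{\mathcal M}_{-k}(s+1){\mathcal M}_{k}(s) - {\mathcal M}_{-k}(s){\mathcal M}_{k}(s+1).
\]
The hope is that this combination simplifies dramatically. The natural tool is a contiguity (recursion) relation for Whittaker functions in the first index: the Maass raising/lowering operators $\maasK_k$, $\maasL_k$ act on the functions $f_k^\pm$ by \eqref{raising f^pm}, \eqref{lowering f^pm}, and translating these into statements about $W_{k,\mu}$ yields first-order relations expressing $y\frac{d}{dy}W_{k,\mu}$, equivalently $W_{k+1,\mu}$ and $W_{k-1,\mu}$, in terms of $W_{k,\mu}$. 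Substituting such a relation into the integrand of ${\mathcal M}_{\pm k}(s+1) = \int_0^\infty W_{\pm k,\mu}(y)W_{\pm k,\nu}(y)y^{s-1}dy$ and integrating by parts in $y$ should express the off-diagonal products in terms of the diagonal ones, collapsing $\det M(s)$ to a single explicit product of the form
\[
\det M(s) = \text{(explicit rational function of $s$)} \cdot {\mathcal M}_{k}(s){\mathcal M}_{-k}(s)
\]
or, better, directly to a ratio of gamma functions with no zeros. Alternatively — and this is probably cleaner — I would insert the Barnes-integral representation of $W_{k,\mu}$ used in the proof of Lemma~\ref{asymp of Mk} and carry out the same contour computation, which already gave the \emph{exact} identity
\[
{\mathcal M}_k(s) = \frac{1}{2\pi i}\int_L \Gamma(-\tfrac12+v+s+ir')\Gamma(-\tfrac12+v+s-ir')\,
\frac{\Gamma(\tfrac12-v-ir)\Gamma(\tfrac12-v+ir)\Gamma(v-k)}{\Gamma(v+s-k)\Gamma(\tfrac12+ir-k)\Gamma(\tfrac12-ir-k)}\,dv;
\]
comparing ${\mathcal M}_k(s+1)$ with ${\mathcal M}_k(s)$ amounts to inserting the factor $v+s-\tfrac12\pm ir'$ vs.\ dividing by $v+s-k$, and the $2\times2$ combination in $\det M(s)$ should telescope. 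Either route reduces the claim to: a concrete ratio of $\Gamma$-values, evaluated at arguments with real part bounded below by a positive constant when $\Re(s)>0$ and $|k|<1/2$, is nonzero — which holds since $\Gamma$ never vanishes.

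The main obstacle is the bookkeeping in the contiguity step: getting the exact constants in the Whittaker recursion right (the factors $(s\pm\frac k2)(1-s\pm\frac k2)$ in \eqref{raising f^pm}–\eqref{lowering f^pm} must be tracked precisely, and there is a shift $y\mapsto 4\pi|n|y$ relating $f_k^\pm$ to $W_{k,\mu}$), and making sure the integration-by-parts boundary terms vanish — which they do by the asymptotics \eqref{W at infinity} (exponential decay at $\infty$) and \eqref{W at 0 a}, \eqref{W at 0 b} (the boundary term at $0$ is $O(y^{s-1+\text{something}\ge 0})\to0$ for $\Re(s)>0$, using $|k|<1/2$). Once the determinant is in closed form as a gamma-quotient, non-vanishing on $\Re(s)>0$ is automatic. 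I would also note that this explicit evaluation of $\det M(s)$ is exactly what is needed downstream to solve the linear system that isolates $L_+(s,F\times F)$ from $L_-$, so the computation is worth doing carefully rather than merely asserting non-vanishing abstractly.
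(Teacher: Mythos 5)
Your framing is right: analyticity is immediate from that of $\mathcal M_{\pm k}$, the content is the non-vanishing of $\det M(s)$, and the answer should be a closed-form gamma quotient (the paper indeed arrives at $\det M(s) = -2\,\Gamma(s+\mu+\nu)\Gamma(s+\mu-\nu)\Gamma(s-\mu+\nu)\Gamma(s-\mu-\nu)/(s\,\Gamma(s)^2)$, which is nonzero for $\Re(s)>0$ since $\mu,\nu\in i\R$ and $\Gamma$ never vanishes). But neither of your two proposed mechanisms for reaching that closed form goes through as described. The contiguity relations behind \eqref{raising f^pm}--\eqref{lowering f^pm} shift the first Whittaker index $\kappa$ by $\pm 1$, so they connect $W_{1/4,\mu}$ to $W_{5/4,\mu}$ and $W_{-3/4,\mu}$ --- never to $W_{-1/4,\mu}$, since $k$ and $-k$ differ by $1/2$. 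Moreover, integrating $\tfrac{d}{dy}\bigl(W_{k,\mu}W_{k,\nu}\bigr)\,y^{s}$ by parts and substituting these relations produces mixed-index products such as $W_{k+1,\mu}W_{k,\nu}$, which lie outside the algebra generated by the four entries of $M(s)$; the determinant does not collapse this way. The Barnes-integral ``telescoping'' is likewise only a hope: the determinant becomes a difference of double Barnes integrals with no visible cancellation mechanism.

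What is actually needed --- and what is missing from your proposal --- is a recurrence in $s$, not in $k$. Following Matthes, the Whittaker ODE satisfied by $W_{k,\mu}$ and $W_{k,\nu}$ yields, for fixed $k$, the three-term relation
\begin{equation*}
(s+1)\,\mathcal M_k(s+2) - 2k(2s+1)\,\mathcal M_k(s+1)
= \frac 1s\bigl(s^2-(\mu+\nu)^2\bigr)\bigl(s^2-(\mu-\nu)^2\bigr)\,\mathcal M_k(s) .
\end{equation*}
Because the columns of $M(s)$ are exactly $\mathcal M_{\pm k}$ at consecutive arguments, this gives a \emph{first-order} functional equation for the determinant, $(s+1)\det M(s+1) = s\det M(s)\cdot\bigl(s^2-(\mu+\nu)^2\bigr)\bigl(s^2-(\mu-\nu)^2\bigr)/s^{2}$. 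A first-order recurrence alone determines $\det M$ only up to a $1$-periodic factor, so the final --- and essential --- step is to iterate it to $s+n$, let $n\to\infty$, and evaluate the limiting constant ($-2$ here) using the asymptotics of Lemma~\ref{asymp of Mk} together with Stirling's formula. That limiting argument is the real engine of the proof and has no counterpart in your sketch; without it you cannot pin down $\det M(s)$ as an explicit gamma quotient, and hence cannot conclude non-vanishing.
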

\begin{proof} 
Holomorphy in $\Re(s)>0$ follows from that of $\mathcal M_k(s)$. 
As in \cite{Matthes1}, one shows that there is a recurrence relation 
\begin{multline}\label{recurrence for M}
(s+1) \mathcal M_k(s+2) -2k(2s+1) \mathcal M_k(s+1)\\ 
=  \frac  1s
\left(s^2-(\mu+\nu)^2 \right)\left(s^2-(\mu-\nu) \right) \mathcal M_k(s)
\end{multline}

 By the recurrence relation \eqref{recurrence for M}, we infer that
\[ (s+1) \det M(s+1) = (s \det M(s)) 
\frac{ (s^{2} - (\mu + \nu)^{2}) \; (s^{2} - (\mu - \nu)^{2})}{s^{2}} \; , \]
and 
\begin{eqnarray*} 
 \det M(s) & = & \frac{\Gamma (s + \mu + \nu)
                       \Gamma (s + \mu - \nu)
                       \Gamma (s - \mu + \nu)
                       \Gamma (s - \mu - \nu)}{
                       \Gamma (s + n + \mu + \nu)
                       \Gamma (s + n + \mu - \nu)
                       \Gamma (s + n - \mu + \nu)
                       \Gamma (s + n - \mu - \nu)} \\
& & \times \frac{(s+n) \Gamma ^{2}(s+n) }{s \; \Gamma ^{2}(s)} 
\det M(s+n) \; . \end{eqnarray*}

By using Lemma~\ref{asymp of Mk} with Stirling's formula, we deduce that\footnote{Note the limit is $-2$ instead of $1$ as in Matthes' paper  
\cite{Matthes1}.} 
\begin{multline*} 
\lim _{n \rightarrow \infty} \frac{(s+n) \Gamma ^{2}(s+n) \det M(s+n) }
{\Gamma (s + n + \mu + \nu)
                       \Gamma (s + n + \mu - \nu)
                       \Gamma (s + n - \mu + \nu)
                       \Gamma (s + n - \mu - \nu)} \\
= -2 \; . 
\end{multline*} 
Therefore we conclude that
\[ \det M(s) = -2 \frac{\Gamma (s + \mu + \nu)
                       \Gamma (s + \mu - \nu)
                       \Gamma (s - \mu + \nu)
                       \Gamma (s - \mu - \nu)}{s \; \Gamma ^{2}(s)}  \; .\]
and thus $\det M(s)\neq 0$ for $\Re(s)>0$. 
\end{proof} 

\subsection{Proof of Proposition~\ref{prop:Matthes}}
We define the Eisenstein series of weight $-2$ and $2$ by
\begin{equation*}
\begin{split}
E_{-2}(z,s) &=  \sum_{\gamma\in \Gamma_\infty \backslash \Gamma_0(4)}
\left(\frac{j(\gamma,z)}{|j(\gamma,z)|}\right)^2 \Im(\gamma z)^s \\
E_{-2}(z,s) &=  \sum_{\gamma\in \Gamma_\infty \backslash \Gamma_0(4)}
\left(\frac{j(\gamma,z)}{|j(\gamma,z)|}\right)^{-2} \Im(\gamma z)^s
\end{split}
\end{equation*}
These series are absolutely convergent for $\Re(s)>1$, with an analytic
continuation (no poles)  to $\Re(s)> 1/2$.

 Using  the Maass raising operator  $\maasK_{1/2}$, we get a form
$\maasK_{1/2}F$ of weight $5/2$ with Fourier expansion
\begin{multline*}
\maasK_{1/2}F(z) = \sum_{n=1}^\infty -\rho(n) W_{5/4,ir}(4\pi ny)e(nx) \\
+\sum_{n=1}^\infty \left((\frac 34)^2+r^2 \right)
\rho(-n)W_{-5/4,ir}(4\pi ny)e(-nx)
\end{multline*}
and using the lowering operator $\maasL_{1/2}$ we get a form
$\maasL_{1/2}F$ of weight  $-3/2$ with Fourier expansion
\begin{multline*}
\maasL_{1/2}F(z) = \sum_{n=1}^\infty -\left( (\frac 14)^2+r^2 \right)
\rho(n) W_{-3/4,ir}(4\pi ny)e(nx) \\
+\sum_{n=1}^\infty \rho(-n)W_{3/4,ir}(4\pi ny)e(-nx)
\end{multline*}
 
Consider the Rankin-Selberg integrals
$$
\mathcal A(s) =\int_{\Gamma_0(4) \backslash \HH}
F(z)\overline{F'(z)}  E(z,s) \frac{dxdy}{y^2}
$$
and
\begin{multline*}
2\mathcal B(s)=
\int_{\Gamma_0(4) \backslash \HH} \maasK_{1/2} F(z)\overline{F'(z)} E_{-2}(z,s)
\frac{dxdy}{y^2} \\
+ \int_{\Gamma_0(4) \backslash \HH} \maasL_{1/2} F(z)\overline{F'(z)}
E_{2}(z,s) \frac{dxdy}{y^2}
\end{multline*}
Since $E_{\pm 2}(z,s)$ are  analytic in  $\Re(s)>1/2$,
$\mathcal B(s)$ is analytic in $\Re(s)>1/2$; and since $E(z,s)$ is
analytic in $\Re(s)>1/2$ except
for  simple pole at $s=1$, $\mathcal A(s)$ is  analytic for
$\Re(s)>1/2$ except for  possibly a simple pole at $s=1$, where the
residue is
$$
\mbox{Res}_{s=1} \mathcal A(s) =
\frac{\langle F,F' \rangle}{\Vol(\Gamma_0(4)\backslash \HH)} =
\frac{\langle F,F' \rangle}{2\pi}
$$
so that $\mathcal A(s)$ is analytic also at $s=1$ if $F$ and $F'$ are
orthogonal.

By the standard unfolding trick, we find that for $\Re(s)>1$,
\begin{multline*}
\mathcal A(s) = \sum_{n=1}^\infty
\frac{4\pi n\rho(n)\overline{\rho'(n)}}{(4\pi n)^{s}}
\int_0^\infty W_{1/4,ir}(y)W_{1/4,ir'}(y) y^{s-2}dy \\
+ \sum_{n=1}^\infty \frac{4\pi n\rho(-n)\overline{\rho'(-n)}}{(4\pi n)^{s}}
\int_0^\infty W_{-1/4,ir}(y)W_{-1/4,ir'}(y) y^{s-2}dy
\end{multline*}
and
\begin{multline*}
2\mathcal B(s) = -\sum_{n=1}^\infty
\frac{4\pi n\rho(n)\overline{\rho'(n)}}{(4\pi n)^{s}}
\int_0^\infty W_{5/4,ir}(y)W_{1/4,ir'}(y) y^{s-2}dy \\
+ \left( (\frac 34)^2+r^2 \right)
\sum_{n=1}^\infty \frac{4\pi n \rho(-n)\overline{\rho'(-n)}}{(4\pi n)^{s}}
\int_0^\infty W_{-5/4,ir}(y) W_{-1/4,ir'}(y) y^{s-2}dy
\end{multline*}

Setting $k = 1/4$ we then have
$$ \mathcal A(s) = (4\pi)^{-s} \left(L^{+}(s)  {\mathcal M}_{k}(s) 
+ L^{-}(s)  {\mathcal M}_{-k}(s)\right) 
$$
and moreover
\begin{lemma}
\begin{multline}
 \mathcal B (s) = (4\pi)^{-s} (L_{+}(s) ( r/2 { \mathcal M}_{k}(s) - 1/2 
{\mathcal M}_{k}(s+1))\\
+ L_{-}(s)(r/2 {\mathcal M}_{-k}(s) \; + \; 1/2 {\mathcal M}_{-k}(s+1)))  
\end{multline}
\end{lemma}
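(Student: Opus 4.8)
The plan is to repeat the Rankin--Selberg unfolding that produced the formula for $\mathcal{A}(s)$, now applied to the two integrals making up $2\mathcal{B}(s)$, and then to collapse the ``wrong-weight'' Whittaker integrals that appear back onto the Mellin transforms $\mathcal{M}_{\pm k}$ introduced above by means of the contiguity relations for the Whittaker function.

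First I would unfold. Inserting the Fourier expansions of $\maasK_{1/2}F$ (weight $5/2$) and $\maasL_{1/2}F$ (weight $-3/2$) recorded above together with that of $F'$, and unfolding the first integral against $E_{-2}(z,s)$ and the second against $E_2(z,s)$ --- legitimate for $\Re(s)>1$ by absolute convergence of the Eisenstein series --- the integration over $x\in[0,1]$ selects the diagonal Fourier coefficients and the rescaling $y\mapsto y/(4\pi n)$ peels off the Dirichlet series, giving
$$
2\mathcal{B}(s)=(4\pi)^{-s}\Bigl(L_+(s)\bigl(-J_1-(\tfrac{1}{16}+r^2)J_2\bigr)+L_-(s)\bigl((\tfrac{9}{16}+r^2)J_1^{-}+J_2^{-}\bigr)\Bigr),
$$
where $J_1=\int_0^\infty W_{5/4,ir}(y)W_{1/4,ir'}(y)y^{s-2}\,dy$, $J_2=\int_0^\infty W_{-3/4,ir}(y)W_{1/4,ir'}(y)y^{s-2}\,dy$, and $J_1^{-},J_2^{-}$ are the same integrals with every first index negated; all four are absolutely convergent for $\Re(s)$ large, being of the type treated in Lemma~\ref{asymp of Mk}.

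Next I would remove the wrong first indices $\pm\tfrac54,\ \mp\tfrac34$. Read on Whittaker functions, the weight-raising relation of \eqref{raising f^pm} is exactly the contiguity identity $W_{\kappa+1,\mu}(y)=(\tfrac{y}{2}-\kappa)W_{\kappa,\mu}(y)-yW_{\kappa,\mu}'(y)$; taking $\kappa=\tfrac14$, $\mu=ir$ expresses $W_{5/4,ir}$ through $W_{1/4,ir}$ and its derivative, and the companion lowering relation gives $(\tfrac{1}{16}+r^2)W_{-3/4,ir}(y)=(\tfrac{y}{2}-\tfrac14)W_{1/4,ir}(y)+yW_{1/4,ir}'(y)$, with two analogous identities at index $-\tfrac14$ handling $J_1^{-},J_2^{-}$. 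After these substitutions each $J$ becomes a linear combination of $\mathcal{M}_k(s),\ \mathcal{M}_k(s+1)$ (respectively $\mathcal{M}_{-k}$) plus a single cross integral $\int_0^\infty W_{\pm1/4,ir}'(y)W_{\pm1/4,ir'}(y)y^{s-1}\,dy$.

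The decisive step --- the one I would want to check most carefully --- is that in the two combinations that actually occur, $-J_1-(\tfrac{1}{16}+r^2)J_2$ and $(\tfrac{9}{16}+r^2)J_1^{-}+J_2^{-}$, the cross integrals enter with opposite signs and cancel identically, leaving only $\mathcal{M}_{\pm k}(s)$ and $\mathcal{M}_{\pm k}(s+1)$; dividing by $2$ and regrouping produces the asserted formula. The absence of any $r'$ in that formula is itself a consistency check on the cancellation, since the only $r'$-dependence of the individual $J_i$ sits in those cross integrals. All of this is done for $\Re(s)>1$; as $\mathcal{B}(s)$ is analytic for $\Re(s)>1/2$ (no pole of $E_{\pm2}$) and $L_\pm$, $\mathcal{M}_{\pm k}$ all continue there, the identity propagates by analytic continuation. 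The main obstacle is entirely bookkeeping: carrying the constants and signs faithfully through the two contiguity relations and the $x$- and $y$-integrations so that the cross-integral cancellation is transparent; no analytic input beyond the Whittaker asymptotics \eqref{W at infinity}, \eqref{W at 0 a}, \eqref{W at 0 b} (which already give convergence of all the integrals and analyticity of $\mathcal{M}_{\pm k}(s)$ for $\Re(s)>0$) is needed.
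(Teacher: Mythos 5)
Your proposal is correct and is essentially the paper's own argument: the paper unfolds both integrals and applies $\maasK_{1/2}+\maasL_{1/2}$ directly to the terms $W_{\pm 1/4,\,ir}(4\pi|n|y)e(nx)$, where the two $W'$ contributions enter with opposite signs and cancel --- precisely the cancellation you obtain by first passing to the shifted-index Fourier expansions and then undoing the shift with the contiguity relations. Note only that carrying your constants through faithfully yields the coefficient $\tfrac14\,\mathcal M_{\pm k}(s)$ rather than the ``$r/2\,\mathcal M_{\pm k}(s)$'' printed in the lemma, which appears to be a typo in the paper (the quantity in question is half the weight $k=1/2$ divided by the final factor of $2$, not the spectral parameter $r$) and is immaterial for the nonsingularity of $M(s)$ and hence for Proposition~\ref{prop:Matthes}.
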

\begin{proof}
Let $k > 0$ be any half integer. In our case $k = 1/2$.
We have, by unfolding the integral, that
\[ \int _{\Gamma _{0}\backslash \HH} K_{k} F(z) \overline{F'(z)}  E_{-2}(z,\;s)
\frac{dx dy}{y^{2}}
= \int ^{\infty}_{0} \int ^{1}_{0} K_{k} F(z) \overline{F'(z)} y^{s-2} dx dy
,\]
and
\[ \int _{\Gamma _{0}\backslash \HH} \Lambda _{k} F(z) \overline{F'(z)}  
E_{2}(z,\;s)
\frac{dx dy}{y^{2}}
= \int ^{\infty}_{0} \int ^{1}_{0} \Lambda _{k} F(z) \overline{F'(z)} 
y^{s-2} dx dy .\]
Now
\begin{eqnarray*}
K_{k} W_{\frac{k}{2} sgn(n),\; ir}(4\pi |n|y) e(nx) &  = &
\left[ ( k/2 - 2 \mbox{sign}(n) \pi |n| y)  
W_{\frac{k}{2} sgn(n),\; ir}(4\pi |n|y) \right.
\\ & &  \left. \mbox{} +  4\pi |n|y 
W^{'}_{\frac{k}{2} sgn(n),\; ir}(4\pi |n|y) \right] e(nx) , \end{eqnarray*}
and
\begin{eqnarray*}
\Lambda_{k} W_{\frac{k}{2} sgn(n),\; ir}(4\pi |n|y) e(nx) &  = &
\left[ ( k/2 - 2 \mbox{sign}(n) \pi |n| y)  
W_{\frac{k}{2} sgn(n),\; ir}(4\pi |n|y) \right.
\\ & &  \left. \mbox{} -  4\pi |n|y 
W^{'}_{\frac{k}{2} sgn(n),\; ir}(4\pi |n|y) \right] e(nx) . \end{eqnarray*}
Hence Lemma 5.5 follows. 
\end{proof}
 
Consequently, 
\begin{equation*}
\begin{split}
 \mathcal A(s) &= (4\pi)^{-s} \left( L_{+}(s)  {\mathcal M}_{k}(s) 
+ L_{-}(s)  {\mathcal M}_{-k}(s) \right) \\
 r \mathcal A(s) - 2 \mathcal B(s) &= 
(4\pi)^{-s} \left( L_{+}(s)  {\mathcal M}_{k}(s+1) 
- L_{-}(s)  {\mathcal M}_{-k}(s+1) \right) 
\end{split}
\end{equation*}
Solving for $L_{+}(s)$, $L_{-}(s)$, we obtain
\[ \left[ \begin{array}{c} L_{+}(s) \\ L_{-}(s) \end{array} \right]   
= - \frac{(4\pi)^{s}}{\det M(s)} M(s) 
\left[ \begin{array}{l} \mathcal A(s) \\  r \mathcal A(s) - 2 \mathcal B(s)  \end{array} \right] \; ,
\]
where $M(s)$ is given by \eqref{a determinant}. 
Therefore Proposition~\ref{prop:Matthes}  follows 
from Lemma~\ref{Nonsingularity of M(s)}.

\subsection{Proof of Proposition~\ref{prop:Rankin-Selberg}}

Consider the Rankin-Selberg integrals
$$
\mathcal I(s) =\int_{\Gamma_0(4) \backslash \HH}
F(z)\overline{F(z)}  E(z,s) \frac{dxdy}{y^2}
$$
and
$$
\mathcal J(s)=
\int_{\Gamma_0(4) \backslash \HH} \maasK_{1/2} F(z)\overline{F(z)} E_{-2}(z,s)
\frac{dxdy}{y^2}
$$
Since $E_{-2}(z,s)$ is  analytic in  $\Re(s)>1/2$,
$\mathcal J(s)$ is analytic in $\Re(s)>1/2$; and since $E(z,s)$ except
for  simple pole at $s=1$, $\mathcal I(s)$ is  analytic for
$\Re(s)>1/2$ except for a simple pole at $s=1$, where the
residue is
$$
\mbox{Res}_{s=1} \mathcal I(s) =
\frac{\langle F,F \rangle}{\Vol(\Gamma_0(4)\backslash \HH)} =
\frac{\langle F,F \rangle}{2\pi}
$$

By the standard unfolding trick, we find that for $\Re(s)>1$,
\begin{multline}\label{unfold I}
\mathcal I(s) = \sum_{n=1}^\infty
\frac{|\rho(n)|^2}{(4\pi n)^{s-1}}
\int_0^\infty W_{1/4,ir}(y)^2 y^{s-2}dy \\
+ \sum_{n=1}^\infty \frac{|\rho(-n)|^2}{(4\pi n)^{s-1}}
\int_0^\infty W_{-1/4,ir}(y)^2 y^{s-2}dy
\end{multline}
and
\begin{multline}\label{unfold J}
\mathcal J(s) = -\sum_{n=1}^\infty
\frac{|\rho(n)|^2}{(4\pi n)^{s-1}}
\int_0^\infty W_{5/4,ir}(y)W_{1/4,ir}(y) y^{s-2}dy \\
+ \left( (\frac 34)^2+r^2 \right)
\sum_{n=1}^\infty \frac{|\rho(-n)|^2}{(4\pi n)^{s-1}}
\int_0^\infty W_{-5/4,ir}(y) W_{-1/4,ir}(y) y^{s-2}dy
\end{multline}

Denoting by $R^{\pm}$ the residue at $s=1$ of $L_{\pm}(s)$, we find
from \eqref{unfold J} that since $\mathcal J(s)$ is analytic at $s=1$
\begin{multline*}
 \left( (\frac 34)^2+r^2 \right)\int_0^\infty W_{-5/4,ir}(y)
W_{-1/4,ir}(y)\frac{dy}{y}  R^{-}\\ =
\int_0^\infty W_{5/4,ir}(y)W_{1/4,ir}(y)\frac{dy}{y}  R^{+}
\end{multline*}

 From the formula \cite[p. 858]{GR} that for $|\Re (\mu)| < 1/2$,
\begin{multline*}
\int ^{\infty}_{0} W_{\kappa,\;\mu}(x)
W_{\lambda,\;\mu}(x) \frac{dx}{x}
 =   \frac{1}{(\kappa - \lambda) \sin 2\pi \mu} \\
\times \left[ \frac{1}{\Gamma (1/2  - \kappa + \mu) \;
\Gamma (1/2 - \lambda - \mu)} -
\frac{1}{\Gamma (1/2  - \kappa - \mu) \;
\Gamma (1/2 - \lambda + \mu)} \right] \; ,
\end{multline*}
it follows that
\[\int ^{\infty}_{0}
W_{\frac{5}{4},\;ir}(x)
W_{\frac{1}{4},\;ir}(x)\frac{dx}{x} = \frac{1}{\sinh (2r\pi)}
\frac{2r}{|\Gamma (1/4 + ir)|^{2}} \; ,\]
and
\[\int ^{\infty}_{0}
W_{-\frac{5}{4},\;ir}(x)
W_{-\frac{1}{4},\;ir}(x) \frac{dx}{x}= \frac{1}{\sinh (2r\pi)}
\frac{2r}{|\Gamma (7/4 + ir)|^{2}} \; .\]
Hence
\begin{equation}\label{rel R}
R^{+} =
\frac{|\Gamma (1/4 + ir)|^{2}}{|\Gamma (3/4 + ir)|^{2}}
R^{-} \; .
\end{equation}

Computing the residue at $s=1$ of $\mathcal I(s)$ using \eqref{unfold I} gives
\begin{equation}\label{res I}
\frac{4\pi \langle F,F \rangle }{\Vol(\Gamma_0(4)\backslash \HH)}  =
R^+\int_0^\infty W_{\frac 14,ir}(y)^2 \frac{dy}{y} +
R^-\int_0^\infty W_{-\frac 14,ir}(y)^2 \frac{dy}{y}
\end{equation}

We have for $|\Re(\mu)| < 1/2$ \cite[p. 858]{GR} :
$$
\int ^{\infty}_{0} W_{\kappa,\;\mu}^{2}(z) \frac{dz}{z}
 = \frac{\pi}{\sin 2\pi \mu} \frac{\psi (1/2 + \mu - \kappa) -
\psi (1/2 - \mu - \kappa)}{\Gamma (1/2 + \mu - \kappa) \;
\Gamma (1/2 - \mu - \kappa)}
$$
where $\psi (x) = \frac{d}{dx} \log \Gamma (x)$.
Therefore
\begin{equation}\label{intW+}
\int_0^\infty W_{\frac 14,ir}(y)^2 \frac{dy}{y} =
\frac{\pi}{i\sinh 2\pi r} \frac{\psi (1/4 + ir ) -
\psi (1/4 - ir)}{|\Gamma (1/4 + ir)|^2}
\end{equation}
and
\begin{equation}\label{intW-}
\int_0^\infty W_{-\frac 14,ir}(y)^2 \frac{dy}{y} =
\frac{\pi}{i\sinh 2\pi r} \frac{\psi (3/4 + ir ) -
\psi (3/4 - ir)}{|\Gamma (3/4 + ir)|^2}
\end{equation}

Inserting \eqref{intW+}, \eqref{intW-} and \eqref{rel R} into
\eqref{res I} gives
\begin{multline}
2\langle F,F \rangle =
R^+ \frac{\pi}{i\sinh(2\pi r) |\Gamma(\frac 14+ir)|^2} \\
\times \left(
\psi (\frac 14 + ir ) -\psi (\frac 14 - ir)+\psi (\frac 34 + ir ) -
\psi (\frac 34 - ir) \right)
\end{multline}
Taking the logarithmic derivative of the reflection formula
$$
\Gamma(x)\Gamma(1-x)=\frac \pi{\sin(\pi x)}
$$ 
gives
$$ \psi(x)-\psi(1-x) = -\pi\cot(\pi x)$$
Hence we find
\begin{multline*}
\psi (\frac 14 + ir) - \psi (\frac 14 - ir) + \psi (\frac 34 + ir)
- \psi (\frac 34 - ir) \\
 =  -\frac{\pi \cos \pi (\frac 14 + ir)}{\sin \pi (\frac 14 + ir)}
+ \frac{\pi \cos \pi (\frac 14 - ir)}{\sin \pi (\frac 14 - ir)} \\
 =  \frac{\pi i \sinh (2\pi r)}{|\sin \pi (\frac 14 + ir)|^{2}}
 =   \frac{2\pi i \sinh (2\pi r)}{\cosh (2\pi r)} \\
 =   2 i \sinh (2\pi r) |\Gamma (\frac 12 + 2ir)|^{2} \;.
\end{multline*}
Therefore we get
\begin{equation*}
\langle F,F \rangle   = \pi \frac{|\Gamma(\frac
12+2ir)|^2}{|\Gamma(\frac 14+ir)|^2} \cdot R^+
\end{equation*}
which proves \eqref{form2 for R+}.

\section{Theta lifts and periods}\label{sec theta}

We summarize the results on theta lifts of Shintani \cite{Shintani} and
Kohnen \cite{Ko1,Ko2} in the holomorphic case, and of Katok and Sarnak
\cite{KS} in the Maass case.

\subsection{Holomorphic forms}
For a holomorphic form $f$ of weight $2k$ (we will later take only
even $k$) for the full modular group $\SL_2(\Z)$, Shintani
\cite{Shintani} showed that
it can be lifted to a cuspidal Hecke eigenform $\theta(f,z)\in S_{k+\frac
12}(\Gamma_0(4),\chi_k)$  of weight $k+\frac 12$
with character $\chi_k(\begin{pmatrix}a&b\\c&d\end{pmatrix}) = (\frac
{-1}d)^k$ for $\Gamma_0(4)$, that is transforming as
$$ F(\gamma z) = \chi_k(\gamma)J_1(\gamma,z)^{2k+1} F(z) ,\quad
\gamma\in \Gamma_0(4) 
$$
where $J_1(\gamma,z) = \theta_1(\gamma z)/\theta_1(z)$, $\theta_1(z) =
\sum_{n=-\infty}^\infty e(n^2z)$.
Thus
$$J_1(\begin{pmatrix}a&b\\c&d\end{pmatrix},z) = \epsilon_d^{-1} (\frac
cd) \sqrt{cz+d}
$$
with $\epsilon_d=1$ if $d=1\mod 4$ and $\epsilon_d=i$ if $d=3\mod 4$.
In particular $J_1(\gamma,z)^2 = (\frac {-1}d)(cz+d)$.

We write the Fourier expansion of $\theta(f,z)$ is
$$\theta(f,z) = \sum_{d\geq 1}c_f(d) e(dz)$$
Then
$$c_f(d)  = \sum_{\disc(q)=d} \int_{C(q)} f(w)(a-bw+cw^2)^{k-1}dw $$
the sum over all $\SL_2(\Z)$-equivalence classes of  binary quadratic forms
$q=[a,b,c]$ of positive discriminant $d=b^2-4ac$.

We note that from \eqref{exp for period via forms} 
$$  \int_{C(q)} f(w)(a-bw+cw^2)^{k-1}dw = d^{\frac{k-1}2} \int_{C(q)}
f d\mu_{C(q)} $$
is a simple multiple of the period of $f$ over the closed (primitive,
oriented) geodesic $C(q)$ corresponding to $q$. 
Thus
$$
\frac{c_f(d)}{d^{(k-\frac 12)/2}}
= \frac 1{d^{1/4}} \sum_{\disc q=d}\int_{C(q)}fd\mu_{C(q)}   
$$
(the sum over all forms of discriminant $d$, not necessarily
primitive), that is
\begin{equation}\label{mu as fourier coeff} 
 \frac{c_f(d)}{d^{(k-\frac 12)/2}} =\frac {\mu_d(f)}{d^{1/4}}
\end{equation}

\subsubsection{An inner product formula} 
Denote by $S^{+}_{k+1/2}(\Gamma _{0}(4))$ the space of cusp forms of
weight $k+1/2$ on $\Gamma _{0}(4)$, whose Fourier expansion  
$\sum _{n\geq 1} c(n)e(nz)$ satisfies the condition $c(n) = 0$ unless 
$(-1)^{k}n \equiv 0,1 \mod 4 $. It was proved by Kohnen
\cite{Ko1} that the two spaces $S_{2k}(\Gamma _{0}(1))$ and
$S^{+}_{k+1/2}(\Gamma _{0}(4))$  are
isomorphic as modules over the Hecke algebra under the Shimura
correspondence. Assume $k$ is even, let 
$$f(z)=\sum_{n=1}^\infty a_f(n) e(nz)\in S_{2k}(\Gamma _{0}(1))$$ 
and normalize the L-function of $f$ as 
$$L(s,f) = \sum_{n=1}^\infty \frac{a_f(n)}{n^{s+(2k-1)/2}} 
$$ 
so that the
functional equation is $s\mapsto 1-s$. 
Let 
$$h_{f}(z) = \sum _{n\geq 1} c_{h_{f}}(n) e(nz) \in S^{+}_{k+1/2}(\Gamma _{0}(4))$$  
corresponds to $f$ as above such that
$\langle h_{f}, h_{f}\rangle = 1$. 
By the works of Kohnen \cite[Theorem 3 and Corollary 1]{Ko2}, we have
\[ c_{h_{f}}(m) \overline{c_{h_{f}}(1)} = 
\frac{(-1)^{k/2}2^{k}}{\langle f, f\rangle} a_{f}(m) \;, \]
and
\[ |c_{h_{f}}(1)|^{2} = \frac{(k-1)!}{\pi ^{k}} 
\frac{L(\frac 12, f)}{\langle f, f\rangle} \; .\]
Hence we see that $\theta (f, z) \neq 0$ if and only if 
$L(\frac 12, f) \neq 0$,
and in this case $h_{f}$ is proportional to $\theta (f, z)$: 
$$
\overline{c_{h_{f}}(1)} h_{f}(z) = \frac{(-1)^{k/2}2^{k}}{\langle f, f\rangle}
\theta (f, z)
$$
Thus we get a formula for the inner product of the lift: 
If $g$ is another cuspidal Hecke eigenform in
$S_{2k}(\Gamma _{0}(1))$, then
\begin{equation} \label{Kohnen inner product} 
\langle \theta (f, \cdot), \; \theta (g, \cdot) \rangle
 = 
\frac{(k-1)!}{2 ^{2k} \pi ^{k}} L(\frac 12, f) \langle f, g\rangle  
\end{equation}

\subsection{Maass forms}
Given an even Maass form $\phi$ for $\SL_2(\Z)$, with eigenvalue
$\lambda = \frac 14+(2r)^2$, the theta lift $F(z)=\theta(\phi,z)$ is a Maass form for
$\Gamma_0(4)$ of weight $1/2$, that is transforming under $\Gamma_0(4)$
as
$$ F(\gamma z) = J(\gamma,z) F(z),\quad \gamma\in \Gamma_0(4)$$
where $J(\gamma,z) = \theta(\gamma z)/\theta(z)$, $\theta(z)
=y^{1/4}\theta_1(z)= y^{1/4}\sum_{n=-\infty}^\infty e(n^2z)$.
Moreover $F$ is an eigenfunction of $\Delta_{1/2}$ with eigenvalue $\frac
14 +r^2$.

The Fourier expansion of $F$ is given by \cite{KS} as 
$$
F(u+iv) = \sum_{d\neq 0} \rho(d)
W_{\frac{\sign(d)}4,ir}(4\pi|d|v) e(dv)
$$
where for $d>0$ 
\begin{equation*}
\rho(d) =
\frac{1}{\sqrt{8}\pi ^{1/4} d ^{3/4}}
\sum_{\disc(q)=d} \int _{C(q)} \phi  ds 
\end{equation*} 
the sum over all $\SL_2(\Z)$-equivalence classes of  binary quadratic forms
$q=[a,b,c]$ of positive discriminant $d=b^2-4ac$ and $C(q)$ is 
the closed (primitive, oriented) geodesic  corresponding to $q$.
(For $d<0$ there is an analogous formula involving Heegner points). 
Thus 
\begin{equation} \label{KS coeff formula}
\rho(d) = \frac 1{\sqrt{8}\pi^{1/4} d^{3/4}} \mu_d  
\end{equation}


Moreover if $\psi$ is another Hecke-Maass eigenform,
we have, in view of  \cite[formula (5.6), p. 224]{KS}, the inner
product formula 
\begin{equation}\label{inner product Maass} 
\langle\theta (\phi, \cdot),\;\theta (\psi, \cdot)\rangle =
\frac{3}{2} \Lambda (1/2,\;\phi) \langle \phi ,\;\psi\rangle ,
\end{equation}
where
\[\Lambda (s,\;\phi) = \pi ^{-s}
\Gamma \left( \frac{s + 2ir}{2} \right)
\Gamma \left( \frac{s - 2ir}{2} \right) L(s,\; \phi) .\]
Note the above formula is still valid even if $\phi, \;\psi$ are not
even.

\section{Proof of the main Theorem}\label{sec:proof of main thm} 

\subsection{Expected value of $\mu_d$}\label{sec:expected value}
In section~\ref{sec theta} we identified the measures $\mu_d(f)$ 
with Fourier coefficients of theta-lifts up to a normalization. Hence
the vanishing of the mean value of $\mu_d(f)/d^{1/4}$ follows from the
corresponding fact for Fourier coefficients of forms of half-integer
weight. 

The first statement of Theorem~\ref{thm 1} follows immediately from the
Hecke's bound that for any
$\alpha \in \R$ and $\epsilon > 0$,  we have
\[  \sum _{n \leq  N} a(n) e(\alpha n) = O(N^{1/2 + \epsilon})  \; , \]
where $a(n)$ is the normalized Fourier coefficient of any holomorphic or Maass
cusp
form, and the implicit constant depends on the form and $\epsilon$ alone.
For the proof, see for example 
\cite[page 111, Theorem 8.1]{Iw1} and
\cite[page 71, Theorem 5.3]{Iw2}.
One makes use of the following formula 
(\cite[page 857, 7.611]{GR}: \\
\[ \int ^{\infty}_{0} x^{-1} W_{k,\;\mu}(x) dx \; = \;
\frac{\pi ^{3/2} 2^{k} \sec (\mu \pi)}{\Gamma (\frac 34 - \frac k2 + \frac \mu 2)
\Gamma (\frac 34 - \frac k2 - \frac \mu 2)}  \; .\]

\subsection{Proof of Theorem~\ref{main thm}}

We wish to compute the bilinear form 
$$B(f,g)=\lim_{N\to\infty}  
\frac 1{\#\{d:d\leq N\}} \sum_{d\leq N} \frac{\mu_d(f)\overline{\mu_d(g)}} {d^{1/2}} \;,
$$
where $f,g\in L^2_{cusp}(\Gamma\backslash G)$ are smooth and $K$-finite, and the sum is over discriminants.
We take $f$ and $g$ to lie in the irreducible subspaces $U_\pi$ defined in \eqref{def of U}, 
that is subspaces of $L^2_{cusp}(\Gamma\backslash G)$ irreducible under the joint actions of $G$, 
the orientation reversal symmetry $\ors$ and under the Hecke algebra. We wish to show that for 
such $f$, $g$, 
\begin{equation}\label{diffs orthogonal} 
 B(f,g)=0
\end{equation}
if $f$, $g$ lie in distinct (hence orthogonal) subspaces $U_f$, $U_g$, and to compute $B(f,f)$. 
By the results of \S~\ref{ladder}, it suffices to consider ``minimal'', or generating vectors, 
that is to consider holomorphic forms or Maass forms. 
In particular we need to show that for such $f$, 
\begin{equation}\label{final result} 
 \lim_{N\to\infty}  
\frac 1{\#\{d:d\leq N\}}\sum_{d\leq N} \frac{|\mu_d(f)|^2}{d^{1/2}} = c(f) L(\frac 12,f) V^{sym}(f,f)
\end{equation}
where 
$$c(f) = 
\begin{cases}
   6/\pi,& f \mbox{ Maass form  }  \\ 1/ \pi & f \mbox{ holomorphic   }  
\end{cases}
$$

Since both $B$ and $V^{sym}$ vanish when $f$ is holomorphic of weight $\equiv 2\mod 4$ 
or an odd Maass form, it suffices to treat the cases of holomorphic forms of weight divisible by $4$ 
and of even Maass forms. 
To do so, we recall that in these cases we have identified $\mu_d(f)$ with
simple  multiples of the Fourier coefficients of theta-lifts
$\theta(f,z)$.   
Thus we may use  Rankin-Selberg theory (Corollary~\ref{Tauberian cor}) 
to recover \eqref{diffs orthogonal}  and \eqref{final result} once we
have made the correct identifications. 
We treat separately the case of holomorphic forms and Maass forms.

\subsubsection{Holomorphic forms}
For a cuspidal Hecke eigenform of weight $2k$, $k$ even, 
the theta lift has weight $k+1/2$ with 
Fourier expansion 
$\theta(f,z) = \sum_{d\geq 1}c_f(d) e(dz)$
with \eqref{mu as fourier coeff} 
$$
 \frac{c_f(d)}{d^{(k-\frac 12)/2}} =\frac {\mu_d(f)}{d^{1/4}}
$$

By standard Rankin-Selberg theory \eqref{standard RS} 
$$ (4\pi)^{-(s+k-\frac 12)} \Gamma(s+k-\frac 12) 
\sum_{n=1}^\infty|\frac{c(n)}{n^{\frac{k-1/2}2}}|^2 n^{-s} 
$$ 
has a simple pole at $s=1$ with residue 
$$R^+(f)=\frac{\langle \theta(f),\theta(f)\rangle}{2\pi}= 
\frac 1{2\pi} (4\pi)^{-k}\Gamma(k) L(\frac 12,f) \langle f,f \rangle
$$ 
by \eqref{Kohnen inner product} and hence the Dirichlet series 
$$ D(s)=\sum_{n=1}^\infty |\frac{\mu_n(f)}{n^{1/4}}|^2 n^{-s}$$
has a simple pole at $s=1$ with residue 
$$ \frac{(4\pi)^{k+\frac 12} }{\Gamma(k+\frac 12)} R^+(f) = 
\frac {\Gamma(k)}{\sqrt{\pi}\Gamma(k+\frac 12)} L(\frac 12,f) \langle
  f,f \rangle \;.
$$
Note that by \eqref{determine V^sym discrete} 
$$ 
V^{sym}(f,f) = \frac 12 2^{2k}\frac{\Gamma(k)^2}{\Gamma(2k)} \langle f,f\rangle  = 
\pi\frac{\Gamma(k)}{\sqrt{\pi}\Gamma(k+\frac 12)} 
\langle f,f\rangle 
$$
and therefore the residue at $s=1$ of $D(s)$ is  
$$
 \frac 1\pi V^{sym}(f,f)L(\frac 12, f) \;.
$$
Consequently we find 
$$ \lim_{N\to \infty} \frac 1N\sum_{n\leq N}
|\frac{\mu_n(f)}{n^{1/4}}|^2 = \frac 1{\pi}  V^{sym}(f,f)L(\frac 12, f) 
$$

\subsubsection{Maass forms} 
 Let $\phi$ be an even Maass Hecke
eigenform with Laplace eigenvalue $1/4+(2r)^2$ and
$F=\theta(\phi,\cdot)$ its theta-lift, with Fourier expansion
$$
F(u+iv) = \sum_{d\neq 0} \rho(d)W_{\frac{\sign(d)}4,ir}(4\pi|d|v) e(dv)
$$ 
Recall \eqref{KS coeff formula} that for $d>0$ we have
$$
\rho(d) =
\frac{\sqrt{2}}{4\pi ^{1/4} d ^{3/4}} \mu_d
$$
Thus by Corollary~\ref{Tauberian cor} we have
$$\frac 1N \sum_{1\leq n\leq N} \left( \frac{ \mu_n(\phi)}{n^{1/4}} \right)^2
\sim  \frac 2{\sqrt{\pi} } R^+(\phi)
$$
where $R^+(\phi)$ is given by \eqref{form2 for R+}.

Inserting \eqref{inner product Maass} into \eqref{form2 for R+} gives
\begin{equation}
R^+ = \frac{3}{2\pi^{3/2}} \frac{|\Gamma(\frac
14+ir)|^4}{|\Gamma(\frac 12+2ir)|^2} L(\frac 12,\phi) \langle
\phi,\phi \rangle
\end{equation}
We note that by \eqref{determine V^sym spherical}  
\[ \frac{ | \Gamma (\frac 14 + ir)|^{4}}{ 2\pi |\Gamma (\frac 12 + 2ir)|^{2}}
\langle\phi,\phi\rangle \; =  \; V^{sym}(\phi,\phi) \; , \]
and hence
\begin{equation}
R^+ = \frac {3}{\sqrt{\pi} } V^{sym}(\phi,\phi) L(\frac 12,\phi)
\end{equation} 
Therefore we get
\begin{equation}
\frac 1N \sum_{1\leq n\leq N} \left( \frac{ \mu_n(\phi)}{n^{1/4}} \right)^2
\sim  \frac 2{\sqrt{\pi} } R^+ = \frac {6}{\pi} V^{sym}(\phi,\phi)
L(\frac 12,\phi)
\end{equation}

 \subsubsection{Orthogonality} 
Finally, the fact that $B(f,g)=0$ if the subspaces $U_f$, $U_g$ are distinct follows 
from standard Rankin-Selberg theory when at least one of $f$ or $g$ is holomorphic, see  
\eqref{standard RS orthogonal} and \eqref{standard RS orthogonal maass holom},   
while the case of both $f$, $g$ being Maass forms follows from Corollary~\ref{Tauberian cor}.

\end{document}